\newcommand{\End}{{\mathrm{End}}}
\newcommand{\Gal}{{\mathrm{Gal}}}
\newcommand{\Hom}{{\mathrm{Hom}}}
\renewcommand{\Im}{{\mathrm{Im}}}
\newcommand{\Ext}{\operatorname{Ext}}
\newcommand{\oH}{\operatorname{H}}
\newcommand{\be}{\begin {equation}}
\newcommand{\ee}{\end {equation}}
\newcommand{\bee}{\begin {equation*}}
\newcommand{\eee}{\end {equation*}}
\theoremstyle{Theorem}
\newtheorem{thm}{Theorem}[section]
\newtheorem{lemt}[thm]{Lemma}
\newtheorem{prpt}[thm]{Proposition}
\theoremstyle{Theorem}
\theoremstyle{Theorem}
\newtheorem{prp}{Proposition}[section]
\newtheorem{corp}[prp]{Corollary}
\newtheorem{thmp}[prp]{Theorem}
\theoremstyle{Plain}
\theoremstyle{Definition}
\begin{document}

\title[Commutative Locally Nash Groups]{Classification of Connected Commutative Locally Nash Groups}

\author[Y. Bao]{Yixin Bao}

\address{School of Sciences, Harbin Institute of Technology, Shenzhen, 518055, China}
\email{mabaoyixin1984@163.com}

\author [Y. Chen] {Yangyang Chen}

\address{School of Sciences, Jiangnan University, Wuxi, 214122, China}
\email{8202007345@jiangnan.edu.cn}

\author [W. Hu]{Weikai Hu}
\address{UFR IM2AG, Uuiversit\'{e} Grenoble Alpes, Saint Martin D'H\`{e}res, France}
\email{weikai.hu26@gmail.com}


\subjclass[2010]{22E15, 14L10, 14P20}

\keywords{Locally Nash Groups, Classification}

\begin{abstract}
In this article, we classify connected commutative (locally) Nash groups, which is a continuation of our previous work on the classification of abelian Nash manifolds. Our results generalize the classification of the one-dimensional case by Madden-Stanton and the two-dimensional case by Baro-Vicente-Otero. Moreover, we determine the affineness and toroidal affinenese of a connected commutative Nash group.
\end{abstract}

 \maketitle


\section{Introduction}
A locally Nash group is a group which is simultaneously a locally Nash manifold such that all group operations are locally Nash maps. A Nash manifold is a locally Nash manifold whose atlas has finitely many Nash charts, while a Nash group is a locally Nash group which is also a Nash manifold. For details of real algebraic geometry, we refer the readers to \cite{BCR} and \cite{Sh}. Since one can define Schwartz functions on Nash manifolds, Nash groups provide us a convenient setting for the study of infinite-dimensional smooth representations, see \cite{AGKL}, \cite{AGS}, \cite{CS} and \cite{SZ}. It is well-known that the category of Nash groups is closely related to the category of real algebraic groups. Motivated by the interest in real algebraic geometry and representation theory, many notions and theorems in the realm of real algebraic groups have been extended to the setting of Nash groups, and numerous results on the structure of Nash groups haven been achieved in recent years, see for example \cite{Sun}, \cite{FS} and \cite{Can}. In \cite{MS} and \cite{BVO}, the authors classified one-dimensional and two-dimensional connected commutative locally Nash groups. In \cite{BC}, the authors classified abelian Nash manifolds. In this article, we will generalize these results and classify connected commutative locally Nash groups with arbitrary dimensions.

A Nash manifold is said to be affine if it is Nash diffeomorphic to a Nash submanifold of
$\mathbb R^n$, for some $n\geq 0$, while a Nash group is called affine if the underlying Nash manifold is affine. By the work \cite{HP1}, \cite{HP2} and \cite{FS}, we know that affine Nash groups are precisely the finite covers of real algebraic groups. Inspired by this observation, the algebraization of affine Nash groups was introduced in \cite{FS} and plays a key role in the study of the structures of affine Nash groups. To study the structure of locally Nash groups (which are not necessarily affine), we need to generalize the algebraization method. For a real algebraic group $\mathsf{G}$, we denote by $\mathsf{G}(\mathbb{R})$ the set of its real points and by $\mathsf{G}(\mathbb{R})^{0}$ the connected component of $\mathsf{G}(\mathbb{R})$ containing the identity. For convenience, we call $\mathsf{G}(\mathbb{R})^{0}$ the identity component of $\mathsf{G}$ in this article. It is obvious that $\mathsf{G}(\mathbb{R})^{0}$ has a natural Nash group structure. Moreover, we have the following characterization of simply connected commutative locally Nash groups in terms of real algebraic groups.

\begin{prp}\label{algebraization}
Every simply connected commutative locally Nash group $G$ is locally Nash equivalent to the universal covering
of $\mathsf{G}(\mathbb{R})^{0}$, for some real connected commutative algebraic group $\mathsf{G}$.
\end{prp}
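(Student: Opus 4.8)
The plan is to read off an algebraic group from the germ of the group law at the identity, and then to integrate the resulting germ isomorphism to a global one using simple connectivity. First I would extract the relevant germ. Since $G$ is a locally Nash group, multiplication and inversion are locally Nash near the identity $e$; restricting them to a connected Nash chart $U\ni e$ produces a commutative Nash group germ, whose Lie algebra $\g=\Lie(G)\cong\BR^{n}$ is abelian. Read in the chart, the group law is a Nash --- hence real analytic and semialgebraic --- commutative and associative germ $m(x,y)$ with unit $e$ and Nash inverse.

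The heart of the argument, and the step I expect to be the main obstacle, is to algebraize this germ: to produce a connected commutative real algebraic group $\mathsf{G}$ whose identity component $\mathsf{G}(\BR)^{0}$ has germ at $e$ Nash-equivalent to that of $G$. Here I would complexify. A Nash function germ extends to a holomorphic germ that is algebraic over the polynomial ring, so $m$ complexifies to an algebraic commutative group-law germ, that is, the germ at the identity of a connected commutative complex algebraic group. One then globalizes this algebraic germ to a genuine connected commutative complex algebraic group $\mathsf{G}_{\BC}$ --- in characteristic zero a commutative algebraic group germ integrates to an actual algebraic group, which one may also see through the structure theory realizing $\mathsf{G}_{\BC}$ as an extension of an abelian variety by a product of a torus and a copy of $\mathbb{G}_{a}^{r}$ --- and descends the real structure carried by $G$ to obtain $\mathsf{G}$. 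The delicate points are that the Nash germ is genuinely algebraic (handled by the locally algebraic nature of Nash functions), the globalization of the algebraic germ, and the descent of the real form; for the latter two I would lean on the algebraization results of \cite{HP1}, \cite{HP2} and \cite{FS}.

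Finally I would integrate the germ isomorphism. Write $H=\wt{\mathsf{G}(\BR)^{0}}$ for the universal covering, a simply connected commutative locally Nash group sharing the germ of $G$ at $e$, and let $\phi$ be the Nash isomorphism of germs, a local homomorphism and local Nash diffeomorphism. Since $G$ is connected it is generated by any neighborhood of $e$, and since $G$ is simply connected, $\phi$ extends by analytic continuation (monodromy) to a global group homomorphism $\Phi\colon G\to H$. This $\Phi$ is locally Nash, because near any $g$ one has $\Phi(gx)=\Phi(g)\,\phi(x)$ and left translation on $H$ is locally Nash; it is a local diffeomorphism, hence a covering homomorphism onto the open, and therefore full, connected group $H$, and as $H$ is simply connected $\Phi$ is an isomorphism whose inverse is locally Nash by the same computation. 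Hence $G$ is locally Nash equivalent to $\wt{\mathsf{G}(\BR)^{0}}$, as desired.
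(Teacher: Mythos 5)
Your overall architecture---algebraize the germ of the group law at the identity, then use simple connectivity to promote the local isomorphism to a global locally Nash equivalence---matches the paper's, and your final monodromy step is a correct (if less explicit) substitute for the paper's doubling construction $\varphi_{n}(x)=2^{n}\varphi(x/2^{n})$. The gap is in the middle step, exactly the one you flag as ``the main obstacle.'' The results you propose to lean on (\cite{HP1}, \cite{HP2}, \cite{FS}) take as input a \emph{Nash} group, i.e.\ a definable group with a finite atlas (for \cite{HP2} and \cite{FS}, even an affine one); they do not apply to a bare Nash group germ, and $G$ itself is only locally Nash, so none of these theorems can be invoked on your data as given. Your attempted workaround---``a Nash germ is algebraic over the polynomial ring, hence $m$ is the germ of an algebraic group, and such a germ integrates''---conflates two different things. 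That each coordinate of $m$ satisfies a polynomial equation does not make $m$ the germ of an algebraic group law; converting an algebraic ``group chunk'' into an actual algebraic group is Weil's group chunk theorem, and running that machinery requires transporting \emph{generic} compatibility back to compatibility at the identity by composing with translations by generic group elements. With only a germ, translation by generic elements is undefined, so the argument does not close up; this is precisely why \cite[Theorem A]{HP1} is stated for globally definable groups rather than for germs.

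The paper bridges this gap with one trick your proposal is missing: since $G$ is simply connected and commutative, it is isomorphic to $\mathbb{R}^{n}$ as a Lie group, so one may quotient by a full lattice to obtain a compact torus $T$. The locally Nash structure descends to $T$ (translations are locally Nash), and compactness forces a finite subatlas, so $T$ is an honest Nash group having the same identity germ as $G$. Now \cite[Theorem A]{HP1} applies to $T$ and produces the algebraic group $\mathsf{G}$ (commutative because the local isomorphism identifies the Lie algebras) together with the local Nash isomorphism, after which your globalization step takes over. With this compactification step inserted, your proof becomes essentially the paper's.
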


Note that the universal covering of a Nash group is naturally a locally Nash group, but not necessary a Nash group, c.f. \cite[Propositions 2.15 and 3.4]{Sun}. Two locally Nash groups are said to be locally Nash equivalent if there exists a locally Nash isomorphism between them.

\begin{prp}\label{csccng}
The locally Nash equivalence classes of simply connected commutative locally Nash groups are one-to-one corresponding to the isogeny classes of real connected commutative algebraic groups.
\end{prp}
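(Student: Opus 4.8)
The plan is to exhibit the correspondence explicitly and then verify that it is a well-defined bijection. To a real connected commutative algebraic group $\mathsf{G}$ I associate the universal covering $\widetilde{\mathsf{G}(\mathbb{R})^{0}}$, which is a simply connected commutative locally Nash group. Proposition~\ref{algebraization} tells me that this assignment already meets every locally Nash equivalence class, so surjectivity is immediate. It therefore remains to show that two real connected commutative algebraic groups $\mathsf{G}$ and $\mathsf{H}$ are isogenous if and only if $\widetilde{\mathsf{G}(\mathbb{R})^{0}}$ and $\widetilde{\mathsf{H}(\mathbb{R})^{0}}$ are locally Nash equivalent.

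For the forward (easy) direction I would start from an isogeny $f\colon \mathsf{G}\to\mathsf{H}$. In characteristic zero $f$ is \'etale, so the induced map $\mathsf{G}(\mathbb{R})^{0}\to\mathsf{H}(\mathbb{R})^{0}$ on identity components is a local Nash isomorphism; its image is an open connected subgroup of $\mathsf{H}(\mathbb{R})$, hence all of $\mathsf{H}(\mathbb{R})^{0}$, while its kernel is finite. Thus $\mathsf{G}(\mathbb{R})^{0}$ is a finite Nash cover of $\mathsf{H}(\mathbb{R})^{0}$, and a Nash group together with a finite Nash cover of it share the same universal covering as locally Nash groups. Passing to the equivalence relation generated by isogenies, isogenous groups yield locally Nash equivalent universal coverings, so the assignment descends to isogeny classes.

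The hard part will be the converse: recovering the isogeny class from the locally Nash equivalence class. Given a locally Nash isomorphism $\phi\colon \widetilde{\mathsf{G}(\mathbb{R})^{0}}\to\widetilde{\mathsf{H}(\mathbb{R})^{0}}$, I would compose with the covering homomorphisms $p_{\mathsf{G}},p_{\mathsf{H}}$ and a local Nash section of $p_{\mathsf{G}}$ near the identity to produce a germ $\psi\colon(\mathsf{G}(\mathbb{R})^{0},e)\to(\mathsf{H}(\mathbb{R})^{0},e)$ of Nash group homomorphism that is a local diffeomorphism. The key algebraization step is then to consider the graph germ $\Gamma=\{(x,\psi(x))\}\subset \mathsf{G}\times\mathsf{H}$: since $\psi$ is a local homomorphism, $\Gamma$ is a germ of subgroup, and I would argue, via the standard fact that the Zariski closure of an analytic subgroup germ of an algebraic group is an algebraic subgroup, that its Zariski closure $\mathsf{K}$ is a connected commutative algebraic subgroup defined over $\mathbb{R}$. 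Because $\psi$ is a local isomorphism one has $\dim\mathsf{K}=\dim\mathsf{G}=\dim\mathsf{H}$, and the two projections $\mathsf{K}\to\mathsf{G}$ and $\mathsf{K}\to\mathsf{H}$ are dominant homomorphisms with finite generic fibres, hence isogenies; this exhibits $\mathsf{G}$ and $\mathsf{H}$ as isogenous.

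I expect the main obstacle to be precisely this last algebraization step: verifying that the Nash germ $\psi$ is genuinely algebraic, so that $\Gamma$ has a well-behaved Zariski closure, that this closure is a subgroup rather than merely a subvariety, and that the construction descends correctly to the real forms and to the identity components. Controlling the real structure, that is, ensuring $\mathsf{K}$ is defined over $\mathbb{R}$ and that its real points reproduce $\psi$ near the identity, is where the care is needed. Once $\mathsf{K}$ is in hand, injectivity on isogeny classes, and with it the claimed bijection, follows.
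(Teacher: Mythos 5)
Your proposal is correct and follows essentially the same route as the paper: the hard direction is exactly the paper's Proposition \ref{inj}, where the intermediate group (your $\mathsf{K}$, the paper's $\mathsf{G}_{3}$) is constructed as the Zariski closure of the graph/image of the locally Nash map inside $\mathsf{G}\times\mathsf{H}$, with the two projections furnishing the isogeny data. The one point you flag but leave open---why $\dim\mathsf{K}$ does not jump under Zariski closure---is settled in the paper by semialgebraicity of the Nash graph (\cite[Proposition 2.8.8]{BCR}, the dimension of a semialgebraic set equals that of its Zariski closure), which is precisely where the Nash hypothesis, rather than mere analyticity or the local-isomorphism property alone, is used.
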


Recall that an isogeny is a surjective morphism of algebraic groups with finite kernel.
By these two propositions, it is necessary for us to classify real connected commutative algebraic groups. In Section \ref{rcag1}, we review Brion's work on real connected commutative algebraic groups (c.f. \cite{Br}) and give a complete classification of them in terms of marked polarizable lattices. To make our parametrization compatible with those of real abelian varieties in \cite{BC}, we introduce the category of filtered polarizable lattices in Section \ref{rcag2} and prove the equivalence between the category of real connected commutative algebraic groups and the category of filtered polarizable lattices. To describe isogeny classes of real connected commutative algebraic groups, we slightly modify the definition of filtered polarizable lattices and introduce the category of filtered polarizable rational spaces. A filtered polarizable rational space is a pair $(\mathfrak{g}_{0},\Lambda_{r})$ where
\begin{itemize}
    \item $\mathfrak{g}_{0}$ is a real finite-dimensional vector space with a two-step filtration
    $$\mathrm{Fil}^{v} \mathfrak{g}_{0} \subseteq \mathrm{Fil}^{a} \mathfrak{g}_{0} \subseteq \mathfrak{g}_{0};$$
    \item $\Lambda_{r}$ is a rational subspace in $\mathfrak{g}=\mathfrak{g}_{0}\otimes_{\mathbb{R}}\mathbb{C}$ stable under the natural $\mathrm{Gal}(\mathbb{C}/\mathbb{R})$-action on $\mathfrak{g}$
\end{itemize}
such that
\begin{itemize}
    \item $\mathrm{Fil}^{v} \mathfrak{g} \cap \Lambda_{r} = 0;$
    \item $(\mathrm{Fil}^{a} \mathfrak{g} \cap \Lambda_{r})\otimes_{\mathbb{Q}}\mathbb{C}= \mathrm{Fil}^{a} \mathfrak{g}/ \mathrm{Fil}^{v} \mathfrak{g};$
    \item $(\Lambda_{r}/(\mathrm{Fil}^{a} \mathfrak{g} \cap \Lambda_{r}))\otimes_{\mathbb{Q}} \mathbb{R} = \mathfrak{g} /\mathrm{Fil}^{a} \mathfrak{g};$
    \item there exists a positive definite symmetric form $S$ on $\mathfrak{g}_{0} /\mathrm{Fil}^{a} \mathfrak{g}_{0}$ satisfying $E(x_{1}+y_{1}\sqrt{-1},x_{2}+y_{2}\sqrt{-1}):= S(y_{1},x_{2})-S(x_{1},y_{2})$ takes rational values on $\Lambda_{r}/(\mathrm{Fil}^{a} \mathfrak{g} \cap \Lambda_{r})$.
\end{itemize}

Here $\mathrm{Fil}^{v} \mathfrak{g}$ and $\mathrm{Fil}^{a} \mathfrak{g}$ denote the complexification of
$\mathrm{Fil}^{v} \mathfrak{g}_0$ and $\mathrm{Fil}^{a} \mathfrak{g}_0$, respectively.
A morphism from a $(\mathfrak{g}_{0,1},\Lambda_{r,1})$ to
$(\mathfrak{g}_{0,2},\Lambda_{r,2})$
is a real linear map  $\varphi$ from $\mathfrak{g}_{0,1}$ to $\mathfrak{g}_{0,2}$ such that
\begin{itemize}
    \item $\varphi_{\mathbb{C}}(\Lambda_{r,1}) \subseteq \Lambda_{r,2}$;
    \item $\varphi(\mathrm{Fil}^{v}\mathfrak{g}_{0,1}) \subseteq \mathrm{Fil}^{v}\mathfrak{g}_{0,2}$;
    \item $\varphi(\mathrm{Fil}^{a}\mathfrak{g}_{0,1}) \subseteq \mathrm{Fil}^{a}\mathfrak{g}_{0,2}$.
\end{itemize}
The category of filtered polarizable rational spaces is an additive category. Now we could state the first main result of this article.

\begin{thmp}\label{main1}
The category of simply connected commutative locally Nash groups is equivalent to the category of filtered polarizable rational spaces.
\end{thmp}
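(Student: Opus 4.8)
The plan is to realize the asserted equivalence by routing it through the category of real connected commutative algebraic groups and its passage to isogeny classes. Write $\mathcal{N}$ for the category of simply connected commutative locally Nash groups, $\mathcal{A}$ for the category of real connected commutative algebraic groups, $\mathcal{L}$ for the category of filtered polarizable lattices, and $\mathcal{R}$ for the category of filtered polarizable rational spaces. I would build the equivalence as a composite: first use the equivalence $\mathcal{A}\simeq\mathcal{L}$ established in Sections \ref{rcag1} and \ref{rcag2}; then observe that the rationalization functor $\mathcal{L}\to\mathcal{R}$, sending $(\mathfrak{g}_0,\Lambda)$ to $(\mathfrak{g}_0,\Lambda\otimes_{\mathbb{Z}}\mathbb{Q})$ and keeping the filtration and the polarizing form unchanged, exhibits $\mathcal{R}$ as the category obtained from $\mathcal{L}$ by inverting those morphisms which become isomorphisms after $\otimes\mathbb{Q}$ (every such arrow being, up to the multiplication-by-$N$ maps, a genuine $\mathcal{L}$-morphism). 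Under $\mathcal{A}\simeq\mathcal{L}$ a morphism of lattices becomes invertible after $\otimes\mathbb{Q}$ precisely when the corresponding morphism of algebraic groups is an isogeny, so that $\mathcal{R}$ models the isogeny category of $\mathcal{A}$; by Proposition \ref{csccng} this isogeny category is exactly what classifies the objects of $\mathcal{N}$ up to locally Nash equivalence.

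Concretely, I would define a functor $F\colon\mathcal{N}\to\mathcal{R}$ as follows. On objects, Proposition \ref{algebraization} presents any $G\in\mathcal{N}$ as the universal covering $\widetilde{\mathsf{G}(\mathbb{R})^{0}}$ of some $\mathsf{G}\in\mathcal{A}$, and isogenous choices of $\mathsf{G}$ produce isomorphic output, so $F$ is well defined on isomorphism classes. To $\mathsf{G}$ I attach the rational space in which $\mathfrak{g}_0=\mathrm{Lie}\,\mathsf{G}(\mathbb{R})^{0}$, the filtration $\mathrm{Fil}^{v}\mathfrak{g}_0\subseteq\mathrm{Fil}^{a}\mathfrak{g}_0$ is cut out by the vector and affine parts of the Chevalley decomposition of $\mathsf{G}$ (following Brion \cite{Br}), and $\Lambda_r$ is the $\mathbb{Q}$-span inside $\mathfrak{g}=\mathfrak{g}_0\otimes_{\mathbb{R}}\mathbb{C}$ of the period lattice $\ker(\exp\colon\mathfrak{g}\to\mathsf{G}(\mathbb{C}))$. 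The four defining conditions of $\mathcal{R}$ are then the literal translations of four structural facts: the vector part has no periods, the torus part $\mathrm{Fil}^{a}\mathfrak{g}/\mathrm{Fil}^{v}\mathfrak{g}$ has full periods, the abelian quotient $\mathfrak{g}/\mathrm{Fil}^{a}\mathfrak{g}$ carries a full real period lattice, and it admits a polarization. Essential surjectivity of $F$ is then immediate from Proposition \ref{algebraization}, and injectivity on isomorphism classes from Proposition \ref{csccng}.

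The heart of the matter, and the step I expect to be the main obstacle, is full faithfulness on morphisms, i.e. the claim that a locally Nash homomorphism $f\colon\widetilde{\mathsf{G}_1(\mathbb{R})^{0}}\to\widetilde{\mathsf{G}_2(\mathbb{R})^{0}}$ is the same datum as a morphism of the associated rational spaces. Since the source is simply connected and commutative, $f$ is determined by its differential $\varphi=df\colon\mathfrak{g}_{0,1}\to\mathfrak{g}_{0,2}$, a real-linear map, and the content is that being locally Nash is equivalent to $\varphi$ preserving the filtrations and satisfying $\varphi_{\mathbb{C}}(\Lambda_{r,1})\subseteq\Lambda_{r,2}$. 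Preservation of the filtration should follow from functoriality of the Chevalley decomposition, transported along $\mathcal{A}\simeq\mathcal{L}$. The genuinely delicate point is the rationality of periods: the locally Nash structure on a universal covering is transcendental relative to the standard structure on $\mathbb{R}^{n}$, being pulled back through covering maps assembled from exponential and elliptic functions, so one cannot read off the behaviour of periods directly. Here I would prove a rigidity statement — that a locally Nash homomorphism of such universal coverings is, up to isogeny, the lift of an algebraic homomorphism $\mathsf{G}_1\to\mathsf{G}_2$ — which forces $\varphi_{\mathbb{C}}(\Lambda_1)$ to be commensurable with a sublattice of $\Lambda_2$, hence to land in $\Lambda_{r,2}$; the affineness results of \cite{HP1}, \cite{HP2} and \cite{FS}, together with a period-rationality analysis on the torus and abelian parts, are the inputs I would adapt.

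For the reverse passage I would clear denominators. Given a morphism $\varphi$ of rational spaces, some integer $N$ makes $N\varphi$ send a lattice representing $\Lambda_{r,1}$ into one representing $\Lambda_{r,2}$, so by $\mathcal{A}\simeq\mathcal{L}$ it arises from an algebraic homomorphism $\mathsf{G}_1\to\mathsf{G}_2$, whose real Nash homomorphism $\mathsf{G}_1(\mathbb{R})^{0}\to\mathsf{G}_2(\mathbb{R})^{0}$ lifts to a locally Nash homomorphism $f_N$ of universal coverings. Because multiplication by $N$ is a locally Nash automorphism $m_N$ of $\widetilde{\mathsf{G}_2(\mathbb{R})^{0}}$ — it covers the algebraic $N$-multiplication on $\mathsf{G}_2$, has invertible differential $N\cdot\mathrm{id}$, and is therefore a global diffeomorphism of the simply connected cover with locally Nash inverse — the homomorphism $f=m_N^{-1}\circ f_N$ is locally Nash with $df=\varphi$, supplying the assignment inverse to $F$ on morphisms. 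Verifying that these assignments are mutually quasi-inverse and functorial is then routine bookkeeping, and together with the object-level bijection it yields the equivalence $\mathcal{N}\simeq\mathcal{R}$ of Theorem \ref{main1}.
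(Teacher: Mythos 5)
Your overall route is the same as the paper's proof of Theorem \ref{simply}: present each object via Proposition \ref{algebraization} as the universal covering of some $\mathsf{G}(\mathbb{R})^{0}$, transport the filtered polarizable lattice of $\mathsf{G}$ through the equivalence $\mathcal{C}\simeq\mathcal{C}''$ of Theorem \ref{equivalence2}, rationalize the period lattice, obtain essential surjectivity by choosing a $\mathrm{Gal}(\mathbb{C}/\mathbb{R})$-stable full lattice $\Lambda\subseteq\Lambda_{r}$, and obtain fullness in the backward direction by clearing denominators: $N\varphi$ comes from an algebraic homomorphism, whose lift to universal coverings is divided by $N$. All of this matches the paper step for step.

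The genuine gap is that the step you yourself call the heart of the matter is never proved. The forward direction of full faithfulness requires exactly the rigidity statement you defer: that a locally Nash homomorphism $f\colon\widetilde{\mathsf{G}_{1}(\mathbb{R})^{0}}\to\widetilde{\mathsf{G}_{2}(\mathbb{R})^{0}}$ is, up to isogeny, the lift of an algebraic homomorphism, so that $df$ preserves the filtrations and carries $\Lambda_{r,1}$ into $\Lambda_{r,2}$. This is strictly stronger than Proposition \ref{csccng}, which concerns only equivalence classes of objects and says nothing about arbitrary morphisms, so it cannot be borrowed from the stated results; and your proposed inputs (the affineness results of \cite{HP1}, \cite{HP2}, \cite{FS} plus an unspecified ``period-rationality analysis'') are not an argument. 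The paper proves this rigidity as Proposition \ref{inj}, by a short argument of a different flavor than what you sketch: compose $f$ with the covering maps to obtain a locally Nash map from $G_{1}$ to $(\mathsf{G}_{1}\times_{\mathrm{Spec}(\mathbb{R})}\mathsf{G}_{2})(\mathbb{R})^{0}$, observe via \cite[Proposition 2.8.8]{BCR} that its image is semi-algebraic of dimension $\dim G_{1}$, and take the Zariski closure $\mathsf{G}_{3}$ of the image; the two projections of $\mathsf{G}_{3}$ then give the isogeny to $\mathsf{G}_{1}$ and the algebraic map to $\mathsf{G}_{2}$, which is precisely what forces $\varphi_{\mathbb{C}}(\Lambda_{r,1})\subseteq\Lambda_{r,2}$. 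Note also that your argument for filtration-preservation via ``functoriality of the Chevalley decomposition'' already presupposes that $f$ is algebraic up to isogeny, i.e.\ presupposes the same unproven rigidity, so as written it is circular. Until this lemma is supplied, the equivalence is not established.
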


Generally, a connected commutative locally Nash group is locally Nash equivalent to the quotient of its universal covering by a discrete lattice. For these groups, we introduce the triple $(\mathfrak{g}_{0},\Lambda_{r},\Gamma)$, where $(\mathfrak{g}_{0},\Lambda_{r})$ is a filtered polarizable rational space as defined above and $\Gamma$ is a discrete lattice in $\mathfrak{g}_{0}$. A morphism from $(\mathfrak{g}_{0,1},\Lambda_{r,1},\Gamma_{1})$ to $(\mathfrak{g}_{0,1},\Lambda_{r,2},\Gamma_{2})$ is a morphism $\varphi$ from $(\mathfrak{g}_{0,1},\Lambda_{r,1})$ to $(\mathfrak{g}_{0,2},\Lambda_{r,2})$ in the category of filtered polarizable rational spaces such that $\varphi(\Gamma_{1})\subseteq \Gamma_{2}$.
We denote by $\mathcal D$ the category of such triples.

\begin{thmp}\label{main2}
The category of connected commutative locally Nash groups is equivalent to the category $\mathcal{D}$.
\end{thmp}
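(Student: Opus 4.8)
The plan is to upgrade the equivalence of Theorem~\ref{main1} by recording the fundamental group of a connected commutative locally Nash group as a discrete lattice. First I would define a functor $\mathcal{F}$ to $\mathcal{D}$ as follows. Given a connected commutative locally Nash group $H$, its universal covering $\widetilde{H}$ is a simply connected commutative locally Nash group, as used in the discussion preceding the statement, so Theorem~\ref{main1} attaches to it a filtered polarizable rational space $(\mathfrak{g}_0,\Lambda_r)$. Because $\widetilde{H}$ is a simply connected abelian Lie group, the exponential map identifies its underlying group with $\mathfrak{g}_0$; under this identification the kernel $\Gamma=\pi_1(H)$ of the covering homomorphism $\widetilde{H}\to H$ becomes a discrete lattice in $\mathfrak{g}_0$, and I would set $\mathcal{F}(H)=(\mathfrak{g}_0,\Lambda_r,\Gamma)$.

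On morphisms, I would use that a locally Nash homomorphism $f\colon H_1\to H_2$ lifts uniquely to a homomorphism $\widetilde{f}\colon\widetilde{H}_1\to\widetilde{H}_2$ of universal coverings; this lift is again locally Nash, since locally it agrees with $p_2^{-1}\circ f\circ p_1$, where the covering maps $p_i$ are local locally Nash isomorphisms. By Theorem~\ref{main1} the lift corresponds to a morphism $\varphi$ of filtered polarizable rational spaces, and functoriality of $\pi_1$ forces $\widetilde{f}(\Gamma_1)\subseteq\Gamma_2$, that is $\varphi(\Gamma_1)\subseteq\Gamma_2$; hence $\varphi$ is a morphism in $\mathcal{D}$. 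In the reverse direction I would build a functor $\mathcal{G}$ sending a triple $(\mathfrak{g}_0,\Lambda_r,\Gamma)$ to the quotient $\widetilde{H}/\Gamma$, where $\widetilde{H}$ is the simply connected group produced by Theorem~\ref{main1} from $(\mathfrak{g}_0,\Lambda_r)$ and $\Gamma\subset\mathfrak{g}_0=\widetilde{H}$ acts by translations. Since $\Gamma$ is discrete, this action is free and properly discontinuous and consists of locally Nash automorphisms, so the quotient inherits a unique locally Nash group structure for which $\widetilde{H}\to\widetilde{H}/\Gamma$ is a locally Nash covering; it is connected and commutative. A morphism $\varphi$ in $\mathcal{D}$ yields through Theorem~\ref{main1} a locally Nash homomorphism $\widetilde{\varphi}$ of universal coverings, and the condition $\varphi(\Gamma_1)\subseteq\Gamma_2$ is exactly what is needed for $\widetilde{\varphi}$ to descend to the quotients, giving the morphism $\mathcal{G}(\varphi)$.

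Finally I would check that $\mathcal{F}$ and $\mathcal{G}$ are quasi-inverse. On objects this is immediate: $\mathcal{G}\mathcal{F}(H)=\widetilde{H}/\pi_1(H)$ is canonically $H$, while for a triple the group $\widetilde{H}/\Gamma$ has universal covering $\widetilde{H}$, recovering $(\mathfrak{g}_0,\Lambda_r)$ via Theorem~\ref{main1}, and fundamental group $\Gamma$. Full faithfulness follows because the unique-lifting map $f\mapsto\widetilde{f}$ and the descent map are mutually inverse bijections between $\mathrm{Hom}(H_1,H_2)$ and the homomorphisms $\widetilde{f}$ with $\widetilde{f}(\Gamma_1)\subseteq\Gamma_2$, which Theorem~\ref{main1} identifies with morphisms $\varphi$ of filtered polarizable rational spaces satisfying $\varphi(\Gamma_1)\subseteq\Gamma_2$, i.e. with morphisms in $\mathcal{D}$. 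I expect the main obstacle to be the careful verification that passage to the universal covering is a functor of locally Nash groups compatible with Theorem~\ref{main1}, together with its dual statement that the translation quotient $\widetilde{H}/\Gamma$ is genuinely locally Nash with the correct atlas; both hinge on the fact that coverings of locally Nash groups are local locally Nash isomorphisms, so that locally Nash structures transport cleanly along them in both directions.
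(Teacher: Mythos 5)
Your proposal is correct and takes essentially the same route as the paper: the paper's proof consists of observing that a continuous homomorphism of connected commutative locally Nash groups is locally Nash if and only if its lift to the universal coverings is locally Nash, and then reducing to Theorem~\ref{simply}, which is exactly your reduction via universal coverings with the fundamental group recorded as the lattice $\Gamma$. Your write-up simply makes explicit the quasi-inverse functors and the lifting/descent verifications that the paper's two-sentence proof leaves implicit.
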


By Theorem \ref{main2}, connected commutative locally Nash groups are characterized by triples in the category $\mathcal D$.
To determine whether the locally Nash group associated with the triple $(\mathfrak{g}_{0},\Lambda_{r},\Gamma)$ is Nash, we introduce a subspace in $\mathfrak{g}_{0}$ called the essentially non-compact affine subspace as follows. Write $\sigma$ for the conjugate map in $\mathrm{Gal}(\mathbb{C}/\mathbb{R})$. We define
$$\mathfrak{g}_{0}^{\mathrm{enca}}:=\mathrm{Fil}^{v}\mathfrak{g}_{0}\oplus(\sqrt{-1}(\Lambda_{r}^{\sigma=-1}\otimes_{\mathbb{Q}}\mathbb{R})\cap\mathrm{Fil}^{a}\mathfrak{g}_{0}),$$
and have the following proposition.

\begin{prp}
Let $G$ be the connected commutative locally Nash group associated with the triple
$(\mathfrak{g}_{0},\Lambda_{r},\Gamma)$. Then the group $G$ is Nash if and only if the lattice $\Gamma$ spans the real vector space $\mathfrak{g}_{0}/\mathfrak{g}_{0}^{\mathrm{enca}}$.
\end{prp}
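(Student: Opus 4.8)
The plan is to translate the analytic statement into linear algebra on $\g_0$ via the algebraization of Theorem \ref{main2}, and then to isolate the single obstruction to being Nash: an \textit{unwound compact direction}. Writing $G=\wt{G}/\Gamma$, where $\wt{G}$ is the simply connected group attached to $(\g_0,\Lambda_r)$, I would first reinterpret $\g_0^{\mathrm{enca}}$ group-theoretically. The summand $\mathrm{Fil}^{v}\g_0$ is the Lie algebra of the vector (unipotent) part, while $\sqrt{-1}(\Lambda_r^{\sigma=-1}\otimes_{\BQ}\BR)\cap\mathrm{Fil}^{a}\g_0$ is exactly the Lie algebra of the maximal split subtorus: the split torus $\BG_m$ contributes its period $2\pi\sqrt{-1}$ to $\Lambda_r^{\sigma=-1}$, whereas the anisotropic torus $\BS^{1}$ and the abelian variety contribute to $\Lambda_r^{\sigma=+1}$ and to $\g/\mathrm{Fil}^{a}\g$. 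Thus $\g_0^{\mathrm{enca}}=\Lie(\sG^{\mathrm{enca}})$ for the maximal connected affine subgroup $\sG^{\mathrm{enca}}\subseteq\sG$ whose real identity component $N:=\sG^{\mathrm{enca}}(\BR)^{0}\cong\BR^{a}\times\BR_{>0}^{b}$ is non-compact; it is simply connected and affine Nash, and sits in an exact sequence $0\to N\to\wt{G}\to Q\to 0$ in which $Q=\wt{T}$ is the universal cover of the compact torus $T=(\sG/\sG^{\mathrm{enca}})(\BR)^{0}$, with $\Lie(T)=\g_0/\g_0^{\mathrm{enca}}$. In this language the hypothesis that $\Gamma$ spans $\g_0/\g_0^{\mathrm{enca}}$ becomes the purely linear condition $\g_0=\g_0^{\mathrm{enca}}+\BR\Gamma$, equivalently that the maximal compact subgroup $K_G=\BR\Gamma/\Gamma$ of $G$ surjects onto $T$.

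I would then record three structural inputs. First, a compact connected commutative locally Nash group is Nash, since compactness yields a finite atlas and the group operations descend to locally Nash ones. Second, a simply connected commutative affine Nash group has no compact-type directions: being a finite cover of a real algebraic group with all compact directions wound up, simple connectivity forces those directions to vanish, so its essentially-non-compact-affine subspace is its entire Lie algebra; equivalently, it admits no quotient isomorphic to the universal cover $\wt{\BS^{1}}$ of a circle, which by Madden--Stanton is not Nash. Third, for a connected commutative Nash group $G$ the non-compact quotient $G/K_G$ is again affine Nash and simply connected. Granting these, the whole proposition reduces to bookkeeping with the subspace $\g_0^{\mathrm{enca}}$.

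For sufficiency, assume $\g_0=\g_0^{\mathrm{enca}}+\BR\Gamma$. Then the image of $\Gamma$ spans $\Lie(T)$, so after enlarging $\Gamma$ by the algebraic period lattice $\Lambda_{\mathrm{alg}}=\ker(\wt{G}\to\sG(\BR)^{0})$ (which already spans $\Lie(T)$) the quotient $\wt{G}/(\Gamma+\Lambda_{\mathrm{alg}})$ is compact, hence Nash by the first input; this shows that the pushforward $\overline{\Gamma}$ of $\Gamma$ to $Q=\wt{T}$ cuts out a compact quotient $\wt{T}/\overline{\Gamma}$. I would then present $G$ as the extension
\begin{equation*}
0\longrightarrow N/(N\cap\Gamma)\longrightarrow G\longrightarrow \wt{T}/\overline{\Gamma}\longrightarrow 0,
\end{equation*}
whose quotient term is compact Nash and whose kernel $N/(N\cap\Gamma)$ is a discrete quotient of the affine Nash group $N$, hence Nash. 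Since an extension of a Nash group by a Nash group is Nash, $G$ is Nash.

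For necessity I argue contrapositively through the third input. If $G$ is Nash then $G/K_G$ is affine Nash and simply connected, so by the second input its Lie algebra $\g_0/\BR\Gamma$ equals its own essentially-non-compact-affine subspace. Because the affine-non-compact directions of a quotient are precisely the images of those upstairs, this forces $(\g_0^{\mathrm{enca}}+\BR\Gamma)/\BR\Gamma=\g_0/\BR\Gamma$, that is $\g_0=\g_0^{\mathrm{enca}}+\BR\Gamma$, which is the spanning condition. The hard part will be the two places where one must know that \textit{a compact-type direction cannot be turned into an affine one}. Concretely, I must prove that the essentially-non-compact-affine subspace is compatible with surjections (the third input together with the claim that no quotient manufactures new affine-non-compact directions out of compact-type ones), and this is delicate precisely when $\BR\Gamma$ meets the compact directions irrationally with respect to $\Lambda_{\mathrm{alg}}$, so that no algebraic circle quotient $T\to\BS^{1}$ is available to expose a non-Nash $\wt{\BS^{1}}$-factor directly. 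I expect to resolve this through the developing-map description of the locally Nash structure on $Q=\wt{T}$: every one-parameter direction of $Q$ is of compact type, its developing map into $T$ is transcendental, and a semialgebraic rigidity argument shows such a direction can never carry the affine Nash structure of $\BG_a$ or $\BG_m^{0}$, no matter how it is positioned relative to $\BR\Gamma$. This rigidity, rather than any of the bookkeeping, is the crux of the proposition.
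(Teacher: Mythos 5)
Both directions of your proposal contain genuine gaps, and they sit exactly where the paper has to work hardest. In the sufficiency direction, your reduction breaks because neither $\Gamma+\Lambda_{\mathrm{alg}}$ nor the image $\overline{\Gamma}$ of $\Gamma$ in $Q=\widetilde{T}$ need be discrete: the hypothesis only says that $\Gamma$ spans $\mathfrak{g}_{0}/\mathfrak{g}_{0}^{\mathrm{enca}}$ as a real vector space, and a discrete lattice can have dense image in a quotient. For the universal cover of $\mathrm{SO}(2,\mathbb{R})$ one has $\Lambda_{\mathrm{alg}}=2\pi\sqrt{-1}\mathbb{Z}$, so with $\Gamma=\sqrt{-1}\mathbb{Z}$ the subgroup $\Gamma+\Lambda_{\mathrm{alg}}$ is dense; and for $\widetilde{G}$ the universal cover of $\mathbb{R}_{>0}\times\mathrm{SO}(2,\mathbb{R})$ with $\Gamma=\mathbb{Z}(1,\sqrt{-1})+\mathbb{Z}(0,\sqrt{2}\sqrt{-1})$, the lattice is discrete and spanning but $\overline{\Gamma}$ is dense in $\widetilde{T}$. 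In such cases $\widetilde{G}/(\Gamma+\Lambda_{\mathrm{alg}})$ and $\widetilde{T}/\overline{\Gamma}$ are not manifolds, so your exact sequence $0\to N/(N\cap\Gamma)\to G\to\widetilde{T}/\overline{\Gamma}\to0$ does not exist. The paper's Proposition \ref{sufficientforNash} repairs exactly this: choose $\gamma_{1},\dots,\gamma_{k}\in\Gamma$ whose images form a basis, so the subgroup $\Gamma''$ they generate maps isomorphically onto a full lattice $\Gamma'$ in $\mathfrak{g}_{0}'$; prove $\widetilde{G}/\Gamma''$ is Nash by exhibiting it as a Nash-locally-trivial $\mathfrak{g}_{0}^{\mathrm{enca}}$-torsor over the compact base $\mathfrak{g}_{0}'/\Gamma'$ (using the algebraic torsor $\mathsf{G}\to\mathsf{T}_{2}\times\mathsf{A}$, rather than your unproven axiom that an extension of Nash by Nash is Nash); then divide by the discrete group $\Gamma/\Gamma''$.

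In the necessity direction the problem is more serious: your third structural input is false. If $G$ is Nash, the maximal compact subgroup $K_{G}=\mathbb{R}\Gamma/\Gamma$ need not be a locally Nash subgroup, and $G/K_{G}$ need not exist in the locally Nash category at all. Take $\widetilde{G}$ to be the universal cover of $\mathbb{R}_{>0}\times\mathrm{SO}(2,\mathbb{R})$ and $\Gamma=\mathbb{Z}(1,\sqrt{-1})$ the diagonal lattice: then $\Gamma$ spans the one-dimensional space $\mathfrak{g}_{0}/\mathfrak{g}_{0}^{\mathrm{enca}}$, so $G=\widetilde{G}/\Gamma$ is Nash, but in the defining locally Nash charts $(r,s)\mapsto(e^{r},\cos s,\sin s)$ the subgroup $K_{G}$ is the curve $\{(e^{s},\cos s,\sin s)\}$, which is not semialgebraic because $e^{s}$ is transcendental over $\mathbb{R}(\cos s,\sin s)$; for the same reason no locally Nash structure on $G/K_{G}\cong\mathbb{R}$ (additive, multiplicative, $\mathrm{SO}(2)$-type, or elliptic) makes the projection $[(r,s)]\mapsto r-s$ locally Nash. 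So you cannot pass to $G/K_{G}$ and quote the simply connected affine case, and the auxiliary claim that ``the affine-non-compact directions of a quotient are the images of those upstairs'' is meaningless here. Beyond this, you explicitly defer the crux --- that a compact-type direction can never carry an affine Nash structure however it sits relative to $\mathbb{R}\Gamma$ --- to an unspecified ``semialgebraic rigidity argument.'' That rigidity is precisely what the paper proves: Lemma \ref{lemmaforNash1} shows an affine Nash open $U\subseteq G$ lifts isomorphically to the cover $G_{r}=\widetilde{G}/(\Gamma\cap\Lambda_{r})$ (via algebraicity of Nash functions over the function field of the algebraic model), Lemma \ref{lemmaforNash2} shows the image of $U$ in $G_{r}/\mathfrak{g}_{0}^{\mathrm{enca}}$ has compact closure, and then finitely many charts and their $\Gamma/\Gamma_{r}$-translates cannot cover $G_{r}$ when $\mathfrak{g}_{0}/(\mathfrak{g}_{0}^{\mathrm{enca}}+\mathbb{R}\Gamma)\neq0$. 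Your proposal contains no substitute for these two lemmas, so the necessity direction remains unproven.
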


Recall that an affine Nash group is said to be anti-linear if it has no quotient Nash group which is almost
linear and positive dimensional while a toroidal affine Nash group \cite{Can} is defined to be a connected anti-linear
affine Nash group. In Section \ref{scclng}, we will define the split additive dimension $d_{v}$, the split multiplicative dimension $d_{t,+}$ and the split twisted multiplicative dimension $d_{t,-}$ for a connected commutative locally Nash group.
Using these data, we have the following criterion for affineness and toroidal affineness of a connected commutative locally Nash group.

\begin{prp}
Let $G$ be the connected commutative locally Nash group associated with the triple $(\mathfrak{g}_{0},\Lambda_{r},\Gamma)$. Then the group $G$ is affine if and only if $\Lambda_{r}\cap\mathfrak{g}_{0}=\Gamma\otimes_{\mathbb{Z}}\mathbb{Q}$. The group $G$ is toroidal affine if and only if $G$ is affine and $d_{v}=d_{t,+}=d_{t,-}=0$.
\end{prp}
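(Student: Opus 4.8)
The plan is to prove the two assertions in turn, reducing affineness to a commensurability statement for lattices and toroidal affineness to the vanishing of a maximal almost linear quotient.

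For the affineness criterion I would begin from Theorem \ref{main2}, which realizes $G$ as $\tilde G/\Gamma$, where $\tilde G$ is the simply connected locally Nash group attached to $(\mathfrak{g}_{0},\Lambda_{r})$ via Theorem \ref{main1}. Since $G$ is connected and commutative, the exponential map identifies $\tilde G$ with the additive group $\mathfrak{g}_{0}$, so $G=\mathfrak{g}_{0}/\Gamma$. By Propositions \ref{algebraization} and \ref{csccng}, $\tilde G$ is the universal covering of $\mathsf{G}(\mathbb{R})^{0}$ for the real connected commutative algebraic groups $\mathsf{G}$ in the isogeny class determined by $(\mathfrak{g}_{0},\Lambda_{r})$, and each such identity component takes the shape $\mathfrak{g}_{0}/\Gamma_{\mathsf G}$ for a period lattice $\Gamma_{\mathsf G}\subseteq\mathfrak{g}_{0}$. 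The first key step is to read off from the classification by filtered polarizable lattices of Sections \ref{rcag1}--\ref{rcag2} that every such period lattice has rational span $\Gamma_{\mathsf G}\otimes_{\mathbb{Z}}\mathbb{Q}=\Lambda_{r}\cap\mathfrak{g}_{0}$; concretely, $\Lambda_{r}$ is the rational span of the complex periods, and intersecting with the fixed space $\mathfrak{g}_{0}=\mathfrak{g}^{\sigma=1}$ selects the real periods. Granting this, I would invoke the cited fact that affine Nash groups are exactly the finite covers of real algebraic groups, together with the observation that in the commutative case finite covers and finite quotients of a real algebraic group remain (identity components of real points of) real algebraic groups, hence affine. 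Affineness of $G$ then amounts to $\Gamma$ being commensurable with some $\Gamma_{\mathsf G}$: in one direction a finite covering $G\to\mathsf{H}(\mathbb{R})^{0}$ shares the universal covering of $G$, hence lies in the same isogeny class and forces $\Gamma\subseteq\Gamma_{\mathsf H}$ of finite index; in the other, after clearing denominators in the rational-valued polarization form $E$ one finds a finite-index sublattice of $\Gamma$ that is an algebraic period lattice, exhibiting $G$ as algebraic. As two lattices in $\mathfrak{g}_{0}$ are commensurable exactly when they have the same rational span, this yields $\Gamma\otimes_{\mathbb{Z}}\mathbb{Q}=\Lambda_{r}\cap\mathfrak{g}_{0}$.

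For the toroidal affineness criterion I would assume $G$ affine and compute its maximal almost linear quotient. By definition $G$ is toroidal affine precisely when it is affine and anti-linear, and anti-linearity is the statement that this maximal almost linear quotient is trivial; as $G$ is connected, this is equivalent to the quotient having dimension zero. Fixing a real algebraic group $\mathsf{H}$ finitely covered by $G$, the almost linear quotients of $G$ correspond up to isogeny to the linear algebraic quotients of $\mathsf{H}$, the largest being $\mathsf{H}/\mathsf{H}_{\mathrm{ant}}$ for the maximal antiaffine subgroup $\mathsf{H}_{\mathrm{ant}}$. Its Lie algebra is cut out from the filtration: the additive directions in $\mathrm{Fil}^{v}\mathfrak{g}_{0}$ and the multiplicative directions in $\mathrm{Fil}^{a}\mathfrak{g}_{0}/\mathrm{Fil}^{v}\mathfrak{g}_{0}$ that are not absorbed into the antiaffine extension contribute the split dimensions, while the Galois decomposition $\Lambda_{r}=\Lambda_{r}^{\sigma=1}\oplus\Lambda_{r}^{\sigma=-1}$ separates the split multiplicative directions from the twisted ones. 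Matching this against the definitions of $d_{v}$, $d_{t,+}$ and $d_{t,-}$ from Section \ref{scclng}, the dimension of $\mathsf{H}/\mathsf{H}_{\mathrm{ant}}$ equals $d_{v}+d_{t,+}+d_{t,-}$. Hence $G$ is anti-linear if and only if $d_{v}=d_{t,+}=d_{t,-}=0$, and combining this with the affineness criterion gives the stated characterization.

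I expect the principal obstacle to lie in the affineness half, in pinning down the rational span of the algebraic period lattice as $\Lambda_{r}\cap\mathfrak{g}_{0}$. This requires carefully matching the lattice data underlying a filtered polarizable lattice with the real and imaginary period directions prescribed by the three defining conditions on $\Lambda_{r}$---that it meets $\mathrm{Fil}^{v}\mathfrak{g}$ trivially, spans $\mathrm{Fil}^{a}\mathfrak{g}/\mathrm{Fil}^{v}\mathfrak{g}$ over $\mathbb{C}$, and spans $\mathfrak{g}/\mathrm{Fil}^{a}\mathfrak{g}$ over $\mathbb{R}$---and keeping the integrality of $E$ under control when passing to sublattices. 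A secondary difficulty in the toroidal half is to confirm that the twisted multiplicative directions, whose real points form compact circles, are counted among the almost linear quotients and not confused with the abelian variety directions, a distinction that is algebraic rather than topological.
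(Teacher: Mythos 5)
Your proposal is correct and follows essentially the same route as the paper: necessity of the lattice condition comes from the characterization of affine Nash groups as finite covers of real algebraic groups together with the identification of the period lattice of $\mathsf{G}(\mathbb{R})^{0}$ with $\Lambda\cap\mathfrak{g}_{0}$ (hence isogeny-invariance of rational spans), sufficiency comes from realizing such a $\Gamma$ as an algebraic period lattice via the classification, and the toroidal criterion comes from reducing to the algebraic case, where anti-linearity corresponds to anti-affineness and $d_{v}+d_{t,+}+d_{t,-}$ computes $\dim\mathsf{H}/\mathsf{H}_{\mathrm{ant}}$. The only differences are in packaging: for sufficiency the paper directly forms the filtered polarizable lattice $\Lambda=\Lambda_{-}\oplus\Gamma$, with $\Lambda_{-}$ a full-rank lattice in $\Lambda_{r}^{\sigma=-1}$, so that $G$ is Nash-equivalent to an identity component outright --- this is the concrete construction your phrase ``is an algebraic period lattice'' leaves implicit, and it makes your detour through a finite-index sublattice and the (true but extra) fact that finite quotients of such groups remain affine unnecessary --- while for toroidal affineness the paper simply cites Can and Brion where you recompute $\dim\mathsf{H}/\mathsf{H}_{\mathrm{ant}}$ from the marked-lattice data.
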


The article is organized as follows. In Sections \ref{rcag1} and \ref{rcag2}, we review the classification of real connected commutative algebraic groups and parameterize these algebraic groups in terms of marked polarizable lattices and filtered polarizable lattices. In Section \ref{scclng}, we prove Propositions \ref{algebraization} and \ref{csccng}, and classify simply connected commutative locally Nash groups in terms of filtered polarizable rational spaces. In Section \ref{cclng}, we give a complete classification of connected commutative locally Nash groups and of connected commutative Nash groups. As an application of these classifications, we determine the conditions for affineness and toroidal affineness of connected commutative Nash groups in Section \ref{affinetoroidal}. Finally in the last section, we show that our classification matches that of one-dimensional and two-dimensional cases in \cite{MS} and \cite{BVO}.

{\bf Acknowledgements.} The authors would like to thank Doctor Yuancao Zhang for his valuable suggestions on real algebraic geometry. Yixin Bao is supported by the NSFC (Grant No.11801117) and the Natural Science Foundation of Guangdong Province, China (Grant No.2018A030313268).

\section{Classification of Real Commutative Algebraic Groups I}\label{rcag1}

In this section, we review the classification of real connected commutative algebraic groups (c.f. \cite{Br}).

For any real connected commutative algebraic group $\mathsf{G}$, there is a unique maximal real connected affine subgroup
$\mathsf{G}_{\mathrm{aff}}$. It is the direct product of a real torus $\mathsf{T}$ and a vector group $\mathsf{V}$. The quotient group $\mathsf{A}=\mathsf{G}/\mathsf{G}_{\mathrm{aff}}$ is a real abelian variety. On the other hand, $\mathsf{G}$ has a unique minimal connected algebraic subgroup $\mathsf{G}_{\mathrm{ant}}$ such that $\mathsf{G}/\mathsf{G}_{\mathrm{ant}}$ is affine. Moreover, $\mathsf{G}_{\mathrm{ant}}$ is anti-affine. Recall that an algebraic group is called anti-affine if every global regular function is constant.

\begin{prp}\label{cagpp1}
The real connected commutative algebraic group $\mathsf{G}$ is canonically isomorphic to $\mathsf{G}/\mathsf{V} \times_{\mathsf{A}} \mathsf{G}/\mathsf{T}$.
\end{prp}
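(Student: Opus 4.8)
The plan is to exhibit the claimed isomorphism as the canonical morphism produced by the universal property of the fibre product, and then to verify that it is bijective. Since both $\mathsf{V}$ and $\mathsf{T}$ are subgroups of $\mathsf{G}_{\mathrm{aff}}$, the quotient map $\mathsf{G}\to\mathsf{A}=\mathsf{G}/\mathsf{G}_{\mathrm{aff}}$ factors through $\mathsf{G}/\mathsf{V}$ and through $\mathsf{G}/\mathsf{T}$; this yields surjections $p\colon\mathsf{G}/\mathsf{V}\to\mathsf{A}$ with kernel $\mathsf{G}_{\mathrm{aff}}/\mathsf{V}\cong\mathsf{T}$ and $q\colon\mathsf{G}/\mathsf{T}\to\mathsf{A}$ with kernel $\mathsf{G}_{\mathrm{aff}}/\mathsf{T}\cong\mathsf{V}$, so that the fibre product over $\mathsf{A}$ is defined. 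The two natural projections $\mathsf{G}\to\mathsf{G}/\mathsf{V}$ and $\mathsf{G}\to\mathsf{G}/\mathsf{T}$ induce the same morphism to $\mathsf{A}$, so they assemble into a homomorphism
\[
\phi\colon\mathsf{G}\longrightarrow\mathsf{G}/\mathsf{V}\times_{\mathsf{A}}\mathsf{G}/\mathsf{T},
\]
defined over $\mathbb{R}$ and independent of any choices, which is what ``canonically'' refers to.

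First I would compute the kernel: an element of $\ker\phi$ maps to the identity in both $\mathsf{G}/\mathsf{V}$ and $\mathsf{G}/\mathsf{T}$, hence lies in $\mathsf{V}\cap\mathsf{T}$. As $\mathsf{G}_{\mathrm{aff}}=\mathsf{T}\times\mathsf{V}$ is an internal direct product, this intersection is trivial; in characteristic zero a homomorphism of algebraic groups with trivial kernel is a closed immersion, so $\phi$ is injective.

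Next I would prove surjectivity by lifting points over $\mathbb{C}$, where the quotient maps are surjective on points. Given a geometric point $(x,y)$ of the fibre product, choose $g_{1},g_{2}\in\mathsf{G}(\mathbb{C})$ with $g_{1}\mapsto x$ in $\mathsf{G}/\mathsf{V}$ and $g_{2}\mapsto y$ in $\mathsf{G}/\mathsf{T}$. The condition $p(x)=q(y)$ forces $g_{1}$ and $g_{2}$ to have the same image in $\mathsf{A}$, so $g_{1}g_{2}^{-1}\in\mathsf{G}_{\mathrm{aff}}$, and we may write $g_{1}g_{2}^{-1}=tv$ with $t\in\mathsf{T}$, $v\in\mathsf{V}$. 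Using that $\mathsf{G}$ is commutative, the element $g:=v^{-1}g_{1}=tg_{2}$ maps to $x$ in $\mathsf{G}/\mathsf{V}$ and to $y$ in $\mathsf{G}/\mathsf{T}$, so $\phi(g)=(x,y)$. Hence $\phi$ is surjective on $\mathbb{C}$-points. As a consistency check, the fibre product has dimension $(\dim\mathsf{G}-\dim\mathsf{V})+(\dim\mathsf{G}-\dim\mathsf{T})-\dim\mathsf{A}$, which equals $\dim\mathsf{G}$.

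Finally I would conclude that $\phi$ is an isomorphism. The fibre product is a commutative algebraic group, hence smooth and reduced in characteristic zero; a closed immersion that is surjective onto a reduced target is an isomorphism. Therefore the base change $\phi_{\mathbb{C}}$ is an isomorphism, and since $\phi$ is defined over $\mathbb{R}$ it is an isomorphism over $\mathbb{R}$ as well. The step I expect to require the most care is surjectivity together with the passage from ``bijective'' to ``isomorphism''; the decisive input throughout is that $\mathsf{G}_{\mathrm{aff}}=\mathsf{T}\times\mathsf{V}$ is a genuine direct product, which is precisely what makes $\mathsf{T}\cap\mathsf{V}$ trivial and what allows the simultaneous lift $g=v^{-1}g_{1}=tg_{2}$ in the surjectivity argument.
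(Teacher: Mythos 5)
Your proof is correct, and it reaches the conclusion by a genuinely different verification than the paper's. Both arguments begin identically, using the universal property of the fibre product to produce the canonical homomorphism $\phi\colon\mathsf{G}\to\mathsf{G}/\mathsf{V}\times_{\mathsf{A}}\mathsf{G}/\mathsf{T}$. From there the paper is purely diagrammatic: it places the fibre product into a short exact sequence $0\to\mathsf{T}\times\mathsf{V}\to\mathsf{G}/\mathsf{V}\times_{\mathsf{A}}\mathsf{G}/\mathsf{T}\to\mathsf{A}\to 0$, observes that $\phi$ fits into a morphism of extensions which is the identity on $\mathsf{T}\times\mathsf{V}$ and on $\mathsf{A}$, and invokes the five lemma. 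You instead check directly that $\phi$ is an isomorphism: the scheme-theoretic kernel is $\mathsf{V}\cap\mathsf{T}$, trivial because $\mathsf{G}_{\mathrm{aff}}=\mathsf{T}\times\mathsf{V}$ is a direct product, so $\phi$ is a closed immersion in characteristic zero; surjectivity follows from your explicit lift $g=v^{-1}g_{1}=tg_{2}$ over $\mathbb{C}$, where commutativity of $\mathsf{G}$ is used; and the passage from bijective closed immersion to isomorphism uses smoothness (hence reducedness) of group schemes in characteristic zero together with descent of isomorphisms from $\mathbb{C}$ to $\mathbb{R}$. The trade-off is clear: the paper's five-lemma argument is shorter but presupposes that one may manipulate short exact sequences of commutative algebraic groups as in an abelian category (exactness of the relevant sequences, applicability of the five lemma there), whereas your argument is more elementary and makes visible exactly where the hypotheses enter --- commutativity for the simultaneous lift, the internal direct product $\mathsf{T}\times\mathsf{V}$ for triviality of the kernel --- at the cost of importing standard scheme-theoretic facts (trivial kernel implies closed immersion, Cartier's theorem, fppf descent). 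Either route is acceptable; yours would also be the one to adapt if one wanted the statement at the level of points or in a setting where the categorical machinery is not yet available.
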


\begin{proof}
By the universal property of fiber product, we have the following exact sequence
$$\xymatrix{
  0 \ar[r] & \mathsf{T}\times \mathsf{V} \ar[r] & \mathsf{G}/\mathsf{V} \times_{\mathsf{A}} \mathsf{G}/\mathsf{T} \ar[r] & \mathsf{A} \ar[r] & 0   }.$$
The universal property of $\mathsf{G}/\mathsf{V} \times_{\mathsf{A}} \mathsf{G}/\mathsf{T}$ induces a morphism from
$\mathsf{G}$ to $\mathsf{G}/\mathsf{V} \times_{\mathsf{A}} \mathsf{G}/\mathsf{T}$. Thus we have the following commutative diagram:
\begin{equation*}
    \xymatrix{
       0 \ar[r] & \mathsf{T}\times \mathsf{V} \ar@{=}[d]\ar[r] & \mathsf{G} \ar[d] \ar[r] & \mathsf{A} \ar@{=}[d] \ar[r] & 0  \\
      0 \ar[r] & \mathsf{T}\times \mathsf{V}\ar[r] & \mathsf{G}/\mathsf{V} \times_{\mathsf{A}} \mathsf{G}/\mathsf{T} \ar[r] & \mathsf{A} \ar[r] & 0.  }
\end{equation*}
By five-lemma, we know the middle arrow is an isomorphism.
\end{proof}

To parametrize real connected commutative algebraic group, we introduce the category $\mathcal{C}'$ of marked polarizable lattices. A marked polarizable lattice is a $6$-tuple $(V_{a},\Lambda_{a}, V_{v}, \Lambda_{t} ,\phi_{v}, \phi_{t})$ where
\begin{itemize}
    \item $V_{a}$ is a real finite-dimensional vector space;
    \item $\Lambda_{a}$ is a full lattice in $V_{a}\otimes_{\mathbb{R}}\mathbb{C}$ stable under the natural $\Gal(\mathbb{C}/\mathbb{R})$-action;
    \item $V_{v}$ is another real finite-dimensional vector space;
    \item $\Lambda_{t}$ is a free $\mathbb{Z}$-module of finite rank with a $\Gal(\mathbb{C}/\mathbb{R})$-action;
    \item $\phi_{v}: V_{v} \to V^{\check{}}_{a}$ is $\mathbb{R}$-linear;
    \item $\phi_{t}: \Lambda_{t} \to (V_{a}\otimes_{\mathbb{R}}\mathbb{C})^{\check{}}/\Lambda^{\check{}}_{a}$ is $\mathbb{Z}[\Gal(\mathbb{C}/\mathbb{R})]$-linear
\end{itemize}
such that
\begin{itemize}
    \item there exists a positive definite symmetric form $S$ on $V_{a}$ satisfying $E(x_{1}+y_{1}\sqrt{-1},x_{2}+y_{2}\sqrt{-1}):= S(y_{1},x_{2})-S(x_{1},y_{2})$ takes integer values on $\Lambda_{a}$.
\end{itemize}
Here $V^{\check{}}_{a}$ is the dual space of $V_{a}$, $(V_{a}\otimes_{\mathbb{R}}\mathbb{C})^{\check{}}$ is the space of anti-linear maps on $V_{a}\otimes_{\mathbb{R}}\mathbb{C}$ and its submodule of maps whose imaginary parts take integral values on $\Lambda_{a}$ is naturally identified with $\Lambda^{\check{}}_{a}$. Notice that the pair $(V_{a},\Lambda_{a})$ is a so-called real polarizable lattice (see \cite{BC}). A morphism from a
$(V_{a,1},\Lambda_{a,1}, V_{v,1}, \Lambda_{t,1} ,\phi_{v,1}, \phi_{t,1})$ to
$(V_{a,2},\Lambda_{a,2}, V_{v,2}, \Lambda_{t,2} ,\phi_{v,2}, \phi_{t,2})$
is a triple $(\varphi_{a}, \varphi_{v}, \varphi_{t})$ where
\begin{itemize}
    \item $\varphi_{a}$ is a real linear map from $V_{a,1}$ to $V_{a,2}$;
    \item $\varphi_{v}$ is a real linear map from $V_{v,2}$ to $V_{v,1}$;
    \item $\varphi_{t}$ is a $\mathbb{Z}[\Gal(\mathbb{C}/\mathbb{R})]$-module homomorphism from $\Lambda_{t,2}$ to $\Lambda_{t,1}$
\end{itemize}
such that
\begin{itemize}
    \item $\varphi_{a,\mathbb{C}}(\Lambda_{a,1}) \subseteq \Lambda_{a,2}$,;
    \item $\phi_{v,1}\circ \varphi_{v}= \varphi^{\check{}}_{a}\circ \phi_{v,2}, ~ \phi_{t,1}\circ \varphi_{t}= \varphi^{\check{}}_{a,\mathbb{C}}\circ \phi_{t,2}$.
\end{itemize}
Here $\varphi_{a,\mathbb{C}}=\varphi_{a}\otimes_{\mathbb{R}}\mathbb{C}$ and $\varphi^{\check{}}_{a}$ is the natural linear map induced by $\varphi_{a}$.

Now we prove the equivalence between the category $\mathcal{C}$ of real connected commutative algebraic groups and the category of marked polarizable lattices. Given a real connected commutative algebraic group $\mathsf{G}$, we write
$$\mathsf{M}:=\mathsf{G}/\mathsf{V}, \quad \mathsf{N}:=\mathsf{G}/\mathsf{T},$$
where $\mathsf{T}$ and $\mathsf{V}$ are those groups as mentioned above. Recall the quotient group $\mathsf{A}=\mathsf{G}/\mathsf{G}_{\mathrm{aff}}$. We have two exact sequences:
\begin{gather*}
  \xymatrix{
    0 \ar[r] & \mathsf{T} \ar[r] & \mathsf{M} \ar[r] & \mathsf{A} \ar[r] & 0,   } \\
  \xymatrix{
    0 \ar[r] & \mathsf{V} \ar[r] & \mathsf{N} \ar[r] & \mathsf{A} \ar[r] & 0.   }
\end{gather*}
From the first exact sequence, we naturally induce a map $\phi_t$ from $\Hom(\mathsf{T}_{\mathbb{C}}, \mathsf{G}_{m,\mathbb{C}})$ to $\Ext (\mathsf{A}_{\mathbb{C}}, \mathsf{G}_{m,\mathbb{C}})$, where $\mathsf{G}_{m,\mathbb{C}}$ is the multiplicative group over complex numbers. From the second exact sequence, we naturally induced a map $\phi_v$ from $\Hom(\mathsf{V}, \mathsf{G}_{a})$ to $\Ext (\mathsf{A}, \mathsf{G}_{a})$, where $\mathsf{G}_{a}$ is the real additive group. By Theorem 1.3 of \cite{BC}, we associate the abelian variety $\mathsf{A}$ with a real polarizable lattice $(V_{a},\Lambda_{a})$. By \cite[Chapter 7, Theorem 7]{Ser} and \cite[Theorem 1]{Ros}, the group $\Ext (\mathsf{A}_{\mathbb{C}}, \mathsf{G}_{m,\mathbb{C}})$ is canonically isomorphic to $\oH^{1}(\mathsf{A}_{\mathbb{C}},\mathcal{O}_{\mathsf{A}_{\mathbb{C}}}^{*})$, while the group $\Ext (\mathsf{A}, \mathsf{G}_{a})$ is canonically isomorphic to $\oH^{1}(\mathsf{A},\mathcal{O}_{\mathsf{A}})$. It is routine to check that $\oH^{1}(\mathsf{A},\mathcal{O}_{\mathsf{A}})$ is isomorphic to $V^{\check{}}_{a}$ and $\oH^{1}(\mathsf{A}_{\mathbb{C}},\mathcal{O}_{\mathsf{A}_{\mathbb{C}}}^{*})$ is isomorphic to $(V_{a}\otimes_{\mathbb{R}}\mathbb{C})^{\check{}}/\Lambda^{\check{}}_{a}$. So in this way we obtain a marked polarizable lattice $$(V_{a},\Lambda_{a}, \Hom(\mathsf{V}, \mathsf{G}_{a}), \Hom(\mathsf{T}_{\mathbb{C}}, \mathsf{G}_{m,\mathbb{C}}), \phi_{v}, \phi_{t}).$$
A morphism between two algebraic groups naturally induces a morphism between two marked polarizable lattices.

\begin{thmp}\label{ccc}
The above construction induces an equivalence from the category $\mathcal{C}$ of real connected commutative algebraic groups to the category $\mathcal{C}'$ of marked polarizable lattices.
\end{thmp}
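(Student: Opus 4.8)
The plan is to show that the functor is fully faithful and essentially surjective, organizing the argument around the fiber-product decomposition $\mathsf{G}\cong \mathsf{M}\times_{\mathsf{A}}\mathsf{N}$ from Proposition \ref{cagpp1}. This reduces the study of $\mathsf{G}$ to three building blocks — the abelian variety $\mathsf{A}$, the torus $\mathsf{T}$, and the vector group $\mathsf{V}$ — together with the two extension classes recorded by $\phi_{t}$ and $\phi_{v}$. The abelian part is already handled by \cite{BC}: the assignment $\mathsf{A}\mapsto (V_{a},\Lambda_{a})$ is an equivalence onto real polarizable lattices, and its naturality supplies the $\varphi_{a}$-component of any morphism together with the polarizability condition on $S$.

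For essential surjectivity I would start from a marked polarizable lattice $(V_{a},\Lambda_{a},V_{v},\Lambda_{t},\phi_{v},\phi_{t})$ and reconstruct a group. First, \cite{BC} produces a real abelian variety $\mathsf{A}$ from $(V_{a},\Lambda_{a})$. The anti-equivalence between real tori and finite-rank $\mathbb{Z}[\Gal(\mathbb{C}/\mathbb{R})]$-modules via the character lattice produces a real torus $\mathsf{T}$ with $\Hom(\mathsf{T}_{\mathbb{C}},\mathsf{G}_{m,\mathbb{C}})\cong\Lambda_{t}$, and dualizing $V_{v}$ produces a vector group $\mathsf{V}$ with $\Hom(\mathsf{V},\mathsf{G}_{a})\cong V_{v}$. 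Then I would use the identifications $\Ext(\mathsf{A},\mathsf{G}_{a})\cong\oH^{1}(\mathsf{A},\mathcal{O}_{\mathsf{A}})\cong V_{a}^{\vee}$ and $\Ext(\mathsf{A}_{\mathbb{C}},\mathsf{G}_{m,\mathbb{C}})\cong\oH^{1}(\mathsf{A}_{\mathbb{C}},\mathcal{O}_{\mathsf{A}_{\mathbb{C}}}^{*})\cong (V_{a}\otimes_{\mathbb{R}}\mathbb{C})^{\vee}/\Lambda_{a}^{\vee}$ (Serre and Rosenlicht, \cite{Ser,Ros}) to read $\phi_{v}$ and $\phi_{t}$ as genuine extension data: $\phi_{v}$ determines a real extension $0\to\mathsf{V}\to\mathsf{N}\to\mathsf{A}\to 0$, while the $\mathbb{Z}[\Gal(\mathbb{C}/\mathbb{R})]$-linearity of $\phi_{t}$ is exactly the descent condition guaranteeing that the extension of $\mathsf{A}_{\mathbb{C}}$ by $\mathsf{T}_{\mathbb{C}}$ it defines descends to a real extension $0\to\mathsf{T}\to\mathsf{M}\to\mathsf{A}\to 0$. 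Setting $\mathsf{G}:=\mathsf{M}\times_{\mathsf{A}}\mathsf{N}$ gives a group whose associated marked polarizable lattice is, up to canonical isomorphism, the one we started with.

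For full faithfulness I would argue that a morphism $\mathsf{G}_{1}\to\mathsf{G}_{2}$ is equivalent to the data of compatible morphisms $\mathsf{A}_{1}\to\mathsf{A}_{2}$, $\mathsf{T}_{1}\to\mathsf{T}_{2}$, and $\mathsf{V}_{1}\to\mathsf{V}_{2}$ respecting the two extension classes. Since $\mathsf{G}_{\mathrm{aff}}=\mathsf{T}\times\mathsf{V}$ is intrinsic (the maximal connected affine subgroup) and $\mathsf{A}=\mathsf{G}/\mathsf{G}_{\mathrm{aff}}$, every morphism carries affine part to affine part and thus induces morphisms on $\mathsf{T}$, $\mathsf{V}$, and $\mathsf{A}$; conversely the fiber-product description recovers the morphism on $\mathsf{G}$ from these. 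Passing to character and cocharacter data and invoking the naturality of the $\Ext$-identifications turns the compatibility with $\phi_{t},\phi_{v}$ into precisely the relations $\phi_{v,1}\circ\varphi_{v}=\varphi_{a}^{\vee}\circ\phi_{v,2}$ and $\phi_{t,1}\circ\varphi_{t}=\varphi_{a,\mathbb{C}}^{\vee}\circ\phi_{t,2}$ of the definition, yielding the required bijection on $\Hom$-sets.

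I expect the main obstacle to be the real-structure bookkeeping for the multiplicative extension: one must verify that the identification $\Ext(\mathsf{A}_{\mathbb{C}},\mathsf{G}_{m,\mathbb{C}})\cong\oH^{1}(\mathsf{A}_{\mathbb{C}},\mathcal{O}_{\mathsf{A}_{\mathbb{C}}}^{*})$ is $\Gal(\mathbb{C}/\mathbb{R})$-equivariant, so that $\mathbb{Z}[\Gal(\mathbb{C}/\mathbb{R})]$-linearity of $\phi_{t}$ corresponds exactly to descent of $\mathsf{M}_{\mathbb{C}}$ to the real group $\mathsf{M}$; the vector-group case is easier because $\mathsf{G}_{a}$ already splits over $\mathbb{R}$. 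A secondary point requiring care is checking that the construction and its inverse are mutually quasi-inverse \emph{up to natural isomorphism}, rather than merely bijective on objects, which amounts to tracking the canonical isomorphisms of Proposition \ref{cagpp1} and of \cite{BC} through the fiber product.
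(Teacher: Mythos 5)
Your proposal follows essentially the same route as the paper's proof: reduce to the fiber-product decomposition of Proposition \ref{cagpp1}, handle the abelian part via \cite{BC}, convert $\phi_{v}$ and $\phi_{t}$ into genuine extension classes through the Serre--Rosenlicht identifications of $\Ext(\mathsf{A},\mathsf{G}_{a})$ and $\Ext(\mathsf{A}_{\mathbb{C}},\mathsf{G}_{m,\mathbb{C}})$ (with $\Gal(\mathbb{C}/\mathbb{R})$-equivariance giving the real structure of the torus extension, which the paper packages as $\Hom_{\mathbb{Z}[\Gal(\mathbb{C}/\mathbb{R})]}(\Hom(\mathsf{T}_{\mathbb{C}},\mathsf{G}_{m,\mathbb{C}}),\Ext(\mathsf{A}_{\mathbb{C}},\mathsf{G}_{m,\mathbb{C}}))\cong\Ext(\mathsf{A},\mathsf{T})$), and deduce full faithfulness from the intrinsic nature of $\mathsf{G}_{\mathrm{aff}}$ plus the fiber product. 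The one fact worth stating explicitly is the vanishing $\Hom(\mathsf{A},\mathsf{G}_{\mathrm{aff}})=0$, which is what makes your claim that the triple $(\varphi_{a},\varphi_{v},\varphi_{t})$ ``recovers the morphism on $\mathsf{G}$'' actually true; this is exactly the paper's snake-lemma faithfulness argument.
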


\begin{proof}
The affine case is well known and the abelian case is proved in \cite{BC}. For general cases, the extension classes of a real vector group $\mathsf{V}$ and a real abelian variety $\mathsf{A}$ are classified by the homomorphisms from the cyclic-free right $\End(\mathsf{V})$-module $\Hom(\mathsf{V},\mathsf{V})$ to $\Ext(\mathsf{A}, \mathsf{V})$. On the other side, we have the canonical map from $\Hom_{\mathbb{R}}(\Hom(\mathsf{V},\mathsf{G}_{a}), \Ext(\mathsf{A}, \mathsf{G}_{a}))$ to
$\Hom_{\End(\mathsf V)}(\Hom(\mathsf{V},\mathsf{V}), \Ext(\mathsf{A},\mathsf{V}))$, which sends a map $\varphi$ in the former linear space to a map $\varphi\otimes_{\mathbb{R}}\mathsf V$. This map is an isomorphism because $\mathsf{V}$ is a direct sum of copies of $\mathsf{G}_{a}$. So there is a natural isomorphism between $$\Hom_{\mathbb{R}}(\Hom(\mathsf{V},\mathsf{G}_{a}), \Ext(\mathsf{A}, \mathsf{G}_{a}))$$ and $\Ext(\mathsf{A}, \mathsf{V})$. Similarly, there is a natural isomorphism between
$$\Hom_{\mathbb{Z[\Gal(\mathbb{C}/\mathbb{R})]}}(\Hom(\mathsf{T}_{\mathbb{C}}, \mathsf{G}_{m,\mathbb{C}}), \Ext(\mathsf{A}_{\mathbb{C}},\mathsf{G}_{m,\mathbb{C}}))$$
and $\Ext(\mathsf{A}, \mathsf{T})$. As in the proof of Proposition \ref{cagpp1}, the real connected commutative algebraic group is uniquely determined by these two extensions. This shows that the functor is essentially surjective.

For any commutative diagram in the categories of commutative algebraic groups
\begin{equation*}
    \xymatrix{
       0  \ar[r] & \mathsf{G}_{\mathrm{aff},1} \ar[d]_{0} \ar[r] & \mathsf{G}_{1} \ar[d] \ar[r] & \mathsf{A}_{1} \ar[d]_{0} \ar[r] & 0  \\
      0 \ar[r] & \mathsf{G}_{\mathrm{aff},2} \ar[r] & \mathsf{G}_{2} \ar[r] & \mathsf{A}_{2} \ar[r] & 0,  }
\end{equation*}
we have an induced map from $\mathsf{A}_{1}$ to $\mathsf{G}_{\mathrm{aff,2}}$ by snake lemma. The induced map is zero since $\Hom(\mathsf{A}_{1}, \mathsf{G}_{\mathrm{aff,2}}) = 0$. So the middle vertical map in above commutative diagram must be zero. Thus the functor is faithful.

To show that this functor is full. By the knowledge of algebraic groups and \cite{BC}, the map $\varphi_{a}$ and $\varphi_{v}$ in the tripe are induced from a morphism between two real abelian varieties $\mathsf{A}_{1}$ and $\mathsf{A}_{2}$, and a morphism between two vector groups $\mathsf{V}_{1}$ and $\mathsf{V}_{2}$, respectively. Without causing confusion, we also denote these two morphisms by $\varphi_{a}$ and $\varphi_{v}$, respectively. We have the following commutative diagram:
\begin{equation*}
  \xymatrix{
    \Hom(\mathsf{V}_2, \mathsf{G}_{a}) \ar[d]_{\varphi_v} \ar[r]^{\phi_v,2} & \Ext(\mathsf{A}_2, \mathsf{G}_{a}) \ar[d]^{\varphi^{\check{}}_{a}} \\
    \Hom(\mathsf{V}_1, \mathsf{G}_{a}) \ar[r]^{\phi_v,1} & \Ext(\mathsf{A}_1, \mathsf{G}_{a}).   }
\end{equation*}
Let $\mathsf{N}_{1}$, $\mathsf{N}_{2}$ and $\mathsf{N}_{3}$ be the extension groups determined by $\phi_{v,1}$, $\phi_{v,2}$ and $\varphi^{\check{}}_{a}\circ\phi_{v,2}$, respectively. The commutative diagram could be decomposed into two commutative diagrams:
\begin{equation*}
  \xymatrix{
    \Hom(\mathsf{V}_2, \mathsf{G}_{a}) \ar[d]_{\varphi_v} \ar[r]^{\varphi^{\check{}}_{a}\circ\phi_v,2} & \Ext(\mathsf{A}_1, \mathsf{G}_{a}) \ar@{=}[d] \\
    \Hom(\mathsf{V}_1, \mathsf{G}_{a}) \ar[r]^{\phi_v,1} & \Ext(\mathsf{A}_1, \mathsf{G}_{a}),   } \quad \xymatrix{
    \Hom(\mathsf{V}_2, \mathsf{G}_{a}) \ar@{=}[d] \ar[r]^{\phi_v,2} & \Ext(\mathsf{A}_2, \mathsf{G}_{a}) \ar[d]^{\varphi^{\check{}}_{a}} \\
    \Hom(\mathsf{V}_2, \mathsf{G}_{a}) \ar[r]^{\varphi^{\check{}}_{a}\circ\phi_v,2} & \Ext(\mathsf{A}_1, \mathsf{G}_{a}).   }
\end{equation*}
Hence we have the following commutative diagrams:
\begin{gather*}
  \xymatrix{
       0 \ar[r] & \mathsf{V}_1 \ar[d]_{\varphi_v} \ar[r] & \mathsf{N}_1 \ar[d] \ar[r] & \mathsf{A}_1 \ar@{=}[d] \ar[r] & 0  \\
    0 \ar[r] & \mathsf{V}_2 \ar[r] & \mathsf{N}_3 \ar[r] & \mathsf{A}_1 \ar[r] & 0,   } \\
  \xymatrix{
       0 \ar[r] & \mathsf{V}_2 \ar@{=}[d] \ar[r] & \mathsf{N}_3 \ar[d] \ar[r] & \mathsf{A}_1 \ar[d]^{\varphi_a} \ar[r] & 0  \\
    0 \ar[r] & \mathsf{V}_2 \ar[r] & \mathsf{N}_2 \ar[r] & \mathsf{A}_2 \ar[r] & 0.   }
\end{gather*}
Thus we obtain a morphism from $\mathsf{N}_{1}$ to $\mathsf{N}_{2}$. Similar result holds for the torus case. Then the fullness of the functor follows from Proposition \ref{cagpp1}.
\end{proof}

We use the subscript $"\mathrm{tor}"$ to denote the torsion part of a finitely generated abelian group.

\begin{corp}\label{antparametrization}
For a given real connected commutative algebraic group $\mathsf{G}$, if the associated marked polarizable lattice is $(V_{a},\Lambda_{a},V_{v},\Lambda_{t},\phi_{v},\phi_{t})$,
then the marked polarizable lattice associated with $\mathsf{G}_{\mathrm{ant}}$ is
    $$(V_{a},\Lambda_{a},V_{v}/\ker{\phi_{v}}, (\Lambda_{t}/\ker{\phi_{t}})/(\Lambda_{t}/\ker{\phi_{t}})_{\mathrm{tor}},\phi_{v}, \phi_{t}).$$
\end{corp}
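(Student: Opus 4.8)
The plan is to reduce the computation of $\mathsf{G}_{\mathrm{ant}}$ to the two ``pure'' cases governed by $\phi_v$ and $\phi_t$ separately, and then read off the answer through the equivalence of Theorem \ref{ccc}. First I would invoke Proposition \ref{cagpp1} to write $\mathsf{G}\cong\mathsf{M}\times_{\mathsf{A}}\mathsf{N}$ with $\mathsf{M}=\mathsf{G}/\mathsf{V}$ the semi-abelian quotient and $\mathsf{N}=\mathsf{G}/\mathsf{T}$ the vector extension of $\mathsf{A}$. Since an abelian variety carries no nonconstant regular function, the composite $\mathsf{G}_{\mathrm{ant}}\hookrightarrow\mathsf{G}\to\mathsf{A}$ is surjective: if $\mathsf{B}$ denotes its image, then $\mathsf{A}/\mathsf{B}$ is at once a quotient of the affine group $\mathsf{G}/\mathsf{G}_{\mathrm{ant}}$ and an abelian variety, hence trivial. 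Thus the abelian datum $(V_a,\Lambda_a)$ is preserved, and it remains only to determine the affine part of $\mathsf{G}_{\mathrm{ant}}$ together with the descended maps $\phi_v,\phi_t$.

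The key structural input I would record is the characterization of anti-affineness inside $\mathcal{C}'$: a group with datum $(V_a,\Lambda_a,V_v,\Lambda_t,\phi_v,\phi_t)$ is anti-affine precisely when $\phi_v$ and $\phi_t$ are both injective. Indeed, a nonconstant regular function on the vector (resp.\ torus) extension corresponds exactly to an element of $\ker\phi_v$ (resp.\ a character in $\ker\phi_t$), under the identifications of $\Ext(\mathsf{A},\mathsf{G}_a)$ and $\Ext(\mathsf{A}_{\mathbb{C}},\mathsf{G}_{m,\mathbb{C}})$ with $H^1$ used just before Theorem \ref{ccc}. Because $\mathsf{G}_{\mathrm{ant}}$ is the minimal connected subgroup with affine quotient, it is the identity component of the common kernel of the homomorphisms $\mathsf{N}\to\mathsf{G}_a$ attached to $\ker\phi_v$ and $\mathsf{M}\to\mathsf{G}_{m,\mathbb{C}}$ attached to $\ker\phi_t$ that the split directions produce. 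This identifies the vector part of $\mathsf{G}_{\mathrm{ant}}$ with $V_v/\ker\phi_v$ and suggests its torus has character lattice $\Lambda_t/\ker\phi_t$, with $\phi_v,\phi_t$ restricting to the (now injective) induced maps.

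The hard part will be the torus factor, and this is exactly where the torsion correction in the statement is forced. In the vector case everything is clean: $\phi_v$ is $\mathbb{R}$-linear, $V_v/\ker\phi_v\cong\operatorname{im}\phi_v$ is again a real vector space, the relevant kernel in $\mathsf{N}$ is connected because $\mathsf{G}_a$ has no nontrivial finite subgroups, and $\phi_v$ descends to an injection. For the torus, however, $\Lambda_t/\ker\phi_t$ need not be torsion-free, while a genuine character lattice must be; moreover $\phi_t$ does not obviously factor through $(\Lambda_t/\ker\phi_t)/(\Lambda_t/\ker\phi_t)_{\mathrm{tor}}$, since a torsion class $\bar\chi$ maps to a possibly nonzero torsion point of $\Ext(\mathsf{A}_{\mathbb{C}},\mathsf{G}_{m,\mathbb{C}})$. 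The resolution I would carry out is to show that these torsion directions correspond to subtori of $\mathsf{T}$ meeting $\mathsf{G}_{\mathrm{ant}}$ in a finite group: passing to $(\Lambda_t/\ker\phi_t)/(\Lambda_t/\ker\phi_t)_{\mathrm{tor}}$ realizes the genuine torus of $\mathsf{G}_{\mathrm{ant}}$, the map $\mathsf{G}_{\mathrm{ant}}\to\mathsf{A}$ becomes an isogeny that leaves $V_a$ intact, and the induced $\phi_t$ is injective as the anti-affineness criterion demands. Minimality follows since any larger quotient torus would reintroduce a character in $\ker\phi_t$, i.e.\ a nonconstant function. Feeding the three pieces---the unchanged $(V_a,\Lambda_a)$, the vector part $V_v/\ker\phi_v$, and the torus lattice $(\Lambda_t/\ker\phi_t)/(\Lambda_t/\ker\phi_t)_{\mathrm{tor}}$ with the descended $\phi_v,\phi_t$---back through Theorem \ref{ccc} then yields the corollary. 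The delicate interplay between torsion in $\Lambda_t/\ker\phi_t$ and the component group in the torus factor is the only genuinely nonformal point; the remainder is the translation supplied by Proposition \ref{cagpp1} and Theorem \ref{ccc}.
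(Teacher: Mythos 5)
Your reduction via Proposition \ref{cagpp1}, your anti-affineness criterion (injectivity of $\phi_v$ and $\phi_t$), and your treatment of the vector factor are all sound (and go well beyond the paper, whose entire proof is ``It is straightforward''). But the step you yourself isolate as ``the only genuinely nonformal point''---the torus factor---is a genuine gap, and it cannot be closed in the form you propose. Surjectivity of $\mathsf{G}_{\mathrm{ant}}\to\mathsf{A}$ does \emph{not} give that the abelian datum $(V_a,\Lambda_a)$ is preserved. The maximal connected affine subgroup of $\mathsf{G}_{\mathrm{ant}}$ is the identity component $\mathsf{K}^0$ of $\mathsf{K}=\mathsf{G}_{\mathrm{ant}}\cap\mathsf{G}_{\mathrm{aff}}$, so the abelian quotient of $\mathsf{G}_{\mathrm{ant}}$ is $\mathsf{G}_{\mathrm{ant}}/\mathsf{K}^0$, an extension of $\mathsf{A}$ by the finite group $\mathsf{K}/\mathsf{K}^0$, whose character group is exactly the torsion $(\Lambda_t/\ker\phi_t)_{\mathrm{tor}}$ you flag. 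When that torsion is nonzero, the abelian quotient of $\mathsf{G}_{\mathrm{ant}}$ is a \emph{proper isogenous cover} $\mathsf{A}'$ of $\mathsf{A}$: the lattice $\Lambda_a$ gets replaced by the finite-index sublattice $\Lambda_a'$ annihilated by the (torsion) images under $\phi_t$ of the saturation of $\ker\phi_t$, and $\phi_t$ descends to $(\Lambda_t/\ker\phi_t)/(\Lambda_t/\ker\phi_t)_{\mathrm{tor}}$ only with values in $V_{a,\mathbb{C}}^{\check{}}/(\Lambda_a')^{\check{}}$, not in $V_{a,\mathbb{C}}^{\check{}}/\Lambda_a^{\check{}}$. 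Your own sentence that ``$\mathsf{G}_{\mathrm{ant}}\to\mathsf{A}$ becomes an isogeny that leaves $V_a$ intact'' concedes precisely this: an isogeny leaves $V_a$ intact but changes $\Lambda_a$, so feeding ``the unchanged $(V_a,\Lambda_a)$'' into Theorem \ref{ccc} at the end is a non sequitur.

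To see the failure concretely, take $\mathsf{V}=0$, $\mathsf{T}=\mathsf{G}_{m,\mathbb{R}}$, and $\mathsf{A}$ a real elliptic curve with $\mathsf{A}_{\mathbb{C}}=\mathbb{C}/(\mathbb{Z}+\mathbb{Z}\tau)$, $\tau$ purely imaginary and generic; let $\phi_t$ send the generator of $\Lambda_t=\mathbb{Z}$ to the real $2$-torsion point of $\hat{\mathsf{A}}$ given by the character $c$ of $\Lambda_a$ with $c(1)=-1$, $c(\tau)=1$. Then $\mathsf{G}_{\mathbb{C}}=(\mathbb{C}\times\mathbb{C}^{\times})/\Lambda_a$ with $\lambda$ acting by $(z,t)\mapsto(z+\lambda,c(\lambda)t)$; its only characters are $(z,t)\mapsto t^{n}$ with $n$ even, the affinization map is $(z,t)\mapsto t^{2}$, and its kernel, which is $\mathsf{G}_{\mathrm{ant},\mathbb{C}}$, equals $(\mathbb{C}\times\{\pm1\})/\Lambda_a\cong\mathbb{C}/(2\mathbb{Z}+\mathbb{Z}\tau)$---an elliptic curve not isomorphic to $\mathsf{A}_{\mathbb{C}}$ for generic $\tau$. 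Here $\ker\phi_t=2\mathbb{Z}$, so $\Lambda_t/\ker\phi_t=\mathbb{Z}/2\mathbb{Z}$ is pure torsion and the corollary's recipe outputs the datum of $\mathsf{A}$ itself, which is wrong. So the descent of $\phi_t$ through the torsion-free quotient \emph{with unchanged target and unchanged $\Lambda_a$} is not merely unproved in your sketch; it is false, and the statement can only hold as printed when $\Lambda_t/\ker\phi_t$ is torsion-free (equivalently $\mathsf{K}$ is connected), or up to isogeny. The isogeny-level version is in fact all the paper needs later---Corollary \ref{antparametrization} enters only the computation of the split dimensions $d_v$, $d_{t,+}$, $d_{t,-}$, which are isogeny invariants---and at that level your outline does go through.
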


\begin{proof}
It is strightforward.
\end{proof}

\section{Classification of Real Commutative Algebraic Groups II}\label{rcag2}
In this section we classify real connected commutative algebraic groups using another linear data called filtered polarizable lattices.

We define the category $\mathcal{C}''$ of filtered polarizable lattices as follows. A filtered polarizable lattice is a pair
$(\mathfrak{g}_{0},\Lambda)$ where
\begin{itemize}
    \item $\mathfrak{g}_{0}$ is a real finite-dimensional vector space with a two-step filtration
    $$\mathrm{Fil}^{v} \mathfrak{g}_{0} \subseteq \mathrm{Fil}^{a} \mathfrak{g}_{0} \subseteq \mathfrak{g}_{0};$$
    \item $\Lambda$ is a discrete lattice in $\mathfrak{g}=\mathfrak{g}_{0}\otimes_{\mathbb{R}}\mathbb{C}$ stable under the natural $\mathrm{Gal}(\mathbb{C}/\mathbb{R})$-action
\end{itemize}
such that
\begin{itemize}
    \item $\mathrm{Fil}^{v} \mathfrak{g} \cap \Lambda = 0;$
    \item $(\mathrm{Fil}^{a} \mathfrak{g} \cap \Lambda)\otimes_{\mathbb{Z}}\mathbb{C}= \mathrm{Fil}^{a} \mathfrak{g}/ \mathrm{Fil}^{v} \mathfrak{g};$
    \item $\Lambda/(\mathrm{Fil}^{a} \mathfrak{g} \cap \Lambda)$ is a full lattice in $\mathfrak{g} /\mathrm{Fil}^{a} \mathfrak{g} $;
    \item there exists a positive definite symmetric form $S$ on $\mathfrak{g}_{0} /\mathrm{Fil}^{a} \mathfrak{g}_{0}$ satisfying $E(x_{1}+y_{1}\sqrt{-1},x_{2}+y_{2}\sqrt{-1}):= S(y_{1},x_{2})-S(x_{1},y_{2})$ takes integer values on $\Lambda/(\mathrm{Fil}^{a} \mathfrak{g} \cap \Lambda)$.
\end{itemize}
Here $\mathrm{Fil}^{v} \mathfrak{g}$ and $\mathrm{Fil}^{a} \mathfrak{g}$ are the complexified spaces of $\mathrm{Fil}^{v} \mathfrak{g}_{0}$ and $\mathrm{Fil}^{a} \mathfrak{g}_{0}$, respectively. A morphism from
$(\mathfrak{g}_{0,1},\Lambda_{1})$ to
$(\mathfrak{g}_{0,2},\Lambda_{2})$
is a real linear map  $\varphi$ from $\mathfrak{g}_{0,1}$ to $\mathfrak{g}_{0,2}$ such that
\begin{itemize}
    \item $\varphi_{\mathbb{C}}(\Lambda_{1}) \subseteq \Lambda_{2}$;
    \item $\varphi(\mathrm{Fil}^{v}\mathfrak{g}_{0,1}) \subseteq \mathrm{Fil}^{v}\mathfrak{g}_{0,2}$.
    \item $\varphi(\mathrm{Fil}^{a}\mathfrak{g}_{0,1}) \subseteq \mathrm{Fil}^{a}\mathfrak{g}_{0,2}$.
\end{itemize}
This is an additive category.

Now we try to show the equivalence between the category $\mathcal{C}$ of real connected commutative algebraic groups and the category $\mathcal{C}''$ of filtered polarizable lattices. Given a real connected commutative algebraic group $\mathsf{G}$, we have its subgroups $\mathsf{G}_{\mathrm{aff}}$ and $\mathsf{V}$ as in the previous section. Let $\mathfrak{g}_{0}$,
$\mathrm{Fil}^{v} \mathfrak{g}_{0}$ and $\mathrm{Fil}^{a} \mathfrak{g}_{0}$ be the real Lie algebras of $\mathsf{G}$,
$\mathsf{V}$ and $\mathsf{G}_{\mathrm{aff}}$, respectively. We view $\mathsf{G}_{\mathbb{C}}$ as a complex Lie group and let $\Lambda$ be the fundamental group of $\mathsf{G}_{\mathbb{C}}$. Then $(\mathfrak{g}_{0}, \Lambda)$ is a filtered polarizable lattice. In this way we get an additive functor from $\mathcal{C}$ to $\mathcal{C}''$.

On the other hand, we construct an additive functor from $\mathcal{C}''$ to $\mathcal{C}'$. Given a filtered polarizable lattice $(\mathfrak{g}_{0}, \Lambda)$, let $V_{a}$ denote the real vector space $\mathfrak{g}_{0}/\mathrm{Fil}^{a} \mathfrak{g}_{0}$, $\Lambda_{a}$ denote the free $\mathbb{Z}$-module $\Lambda / (\mathrm{Fil}^{a} \mathfrak{g}\cap \Lambda)$, $V_{v}$ denote $(\mathrm{Fil}^{v} \mathfrak{g}_{0})^{\check{}}$, the dual space of $\mathrm{Fil}^{v} \mathfrak{g}_{0}$ and $\Lambda_{t}$ denote the free $\mathbb{Z}$-module $\sqrt{-1}(\mathrm{Fil}^{a} \mathfrak{g}\cap \Lambda)^{\check{}}$. It remains for us to construct the maps $\phi_{v}$ and $\phi_{t}$. By the definition of filtered polarizable lattices we have the following direct sum decomposition
$$\mathrm{Fil}^{a} \mathfrak{g} = \mathrm{Fil}^{v} \mathfrak{g}\oplus (\Lambda_{t}\otimes_{\mathbb{Z}}\mathbb{C}).$$
Denote the latter factor by $V_{t}$. We get two connected commutative complex Lie groups $$\mathfrak{n}:=\mathfrak{g}/(V_{t}+\Lambda), \quad \mathfrak{m}:=\mathfrak{g}/(\mathrm{Fil}^{v} \mathfrak{g}+\Lambda).$$The first group has a closed subgroup $\mathrm{Fil}^{v} \mathfrak{g}$. So it fits the following exact sequence:
$$0 \to \mathrm{Fil}^{v} \mathfrak{g} \to \mathfrak{n} \to \mathrm{A}_{\mathbb{C}} \to 0,$$
where $\mathrm{A}_{\mathbb{C}}$ denote the complex Lie group $(V_{a}\otimes_{\mathbb{R}}\mathbb{C})/\Lambda_{a}$. For any complex linear map from $\mathrm{Fil}^{v} \mathfrak{g}$ to $\mathbb{C}$, the graph map embedded $\mathrm{Fil}^{v} \mathfrak{g}$ into $\mathfrak{n}\times \mathbb{C}$ as a closed subgroup. Then we get the following exact sequence:
$$0 \to \mathbb{C} \to (\mathfrak{n} \times \mathbb{C})/\mathrm{Fil}^{v} \mathfrak{g}\to \mathrm{A}_{\mathbb{C}}\to 0.$$
So we get a principal $\mathbb{C}$-bundle on $\mathrm{A}_{\mathbb{C}}$. It determines an element in $\oH^{1}(\mathrm{A}_{\mathbb{C}}, \mathscr{O}^{an}_{\mathrm{A}_{\mathbb{C}}})$ which is canonically isomorphic to $(\mathfrak{g}/\mathrm{Fil}^{a} \mathfrak{g})^{\check{}}$, namely, the space of anti-linear maps on $\mathfrak{g}/\mathrm{Fil}^{a} \mathfrak{g}$. So we get a map from $\Hom_{\mathbb{C}}(\mathrm{Fil}^{v} \mathfrak{g},\mathbb{C})$ to $(\mathfrak{g}/\mathrm{Fil}^{a} \mathfrak{g})^{\check{}}$. It is routine to check that the map is $\mathbb{C}$-linear and preservers the $\mathrm{Gal}(\mathbb{C}/\mathbb{R})$-action. Taking the $\mathrm{Gal}(\mathbb{C}/\mathbb{R})$-fixed part, we get the map $\phi_{v}$ from $V_{v}$ to $V_{a}^{\check{}}$.

Similarly, we consider the complex Lie group $\mathfrak{m}$. So it fits the following exact sequence:
$$0 \to V_{t}/(\mathrm{Fil}^{a} \mathfrak{g}\cap \Lambda) \to \mathfrak{m} \to \mathrm{A}_{\mathbb{C}} \to 0.$$
For any homomorphism of Lie groups from $V_{t}/(\mathrm{Fil}^{a} \mathfrak{g}\cap \Lambda)$ to $\mathbb{C}/\sqrt{-1}\mathbb{Z}$, the graph map embedded $V_{t}/(\mathrm{Fil}^{a} \mathfrak{g}\cap \Lambda)$ into $\mathfrak{m}\times \mathbb{C}/\sqrt{-1}\mathbb{Z}$ as a closed subgroup. Then we get the following exact sequence:
$$0 \to \mathbb{C}/\sqrt{-1}\mathbb{Z} \to (\mathfrak{m} \times \mathbb{C}/\sqrt{-1}\mathbb{Z})/(V_{t}/(\mathrm{Fil}^{a} \mathfrak{g}\cap \Lambda)) \to \mathrm{A}_{\mathbb{C}} \to 0.$$
So we get a principal $\mathbb{C}/\sqrt{-1}\mathbb{Z}$-bundle on $\mathrm{A}_{\mathbb{C}}$. It determines a primitive element in $\oH^{1}(\mathrm{A}_{\mathbb{C}}, (\mathscr{O}^{an}_{\mathrm{A}_{\mathbb{C}}})^{*})$. By GAGA \cite{Se} and Appell-Humbert Theorem \cite[Chapter 1, Section 2]{Mu}, the group of primitive elements in $\oH^{1}(\mathrm{A}_{\mathbb{C}}, (\mathscr{O}^{an}_{\mathrm{A}_{\mathbb{C}}})^{*})$ is canonically isomorphic to $V_{a,\mathbb{C}}^{\check{}}/\Lambda_{a}^{\check{}}$. So we get a map $\phi_{t}$ from $\Lambda_{t}$ to $V_{a,\mathbb{C}}^{\check{}}/\Lambda_{a}^{\check{}}$. It is routine to check that the map is $\mathbb{Z}[\mathrm{Gal}(\mathbb{C}/\mathbb{R})]$-linear. Thus the $6$-tuple $(V_{a},\Lambda_{a}, V_{v}, \Lambda_{t} ,\phi_{v}, \phi_{t})$ is a marked polarizable lattice. It is routine to check that a morphism between two filtered polarizable lattices induces a morphism between two marked polarizable lattices.

The above construction defines an additive functor from the category $\mathcal{C}''$ of filtered polarizable lattices to the category $\mathbb{C}'$ of marked polarizable lattices. Combining the two constructions of functors defined in this section and the construction in the previous section, we get the following commutative diagram:
\begin{equation*}
   \xymatrix{
      \mathcal{C}  \ar[d]\ar[rd] &  \\
      \mathcal{C}''\ar[r]  & \mathcal{C}'
    }
\end{equation*}
Moreover, we have the following result.

\begin{prp}
All arrows in the above diagram are equivalences of categories.
\end{prp}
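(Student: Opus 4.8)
The plan is to leverage the commutative triangle together with Theorem \ref{ccc}. Write $F\colon\mathcal{C}\to\mathcal{C}''$ for the functor $\mathsf{G}\mapsto(\mathfrak{g}_{0},\Lambda)$ constructed at the beginning of this section (the Lie algebra with its two-step filtration, together with the fundamental group of $\mathsf{G}_{\mathbb{C}}$), and $K\colon\mathcal{C}''\to\mathcal{C}'$ for the functor constructed afterwards. By the very construction, the diagonal $K\circ F\colon\mathcal{C}\to\mathcal{C}'$ coincides with the functor of Theorem \ref{ccc}, which is an equivalence. Consequently it suffices to prove that $F$ is an equivalence: if $F'$ is a quasi-inverse of $F$, then $K\cong(K\circ F)\circ F'$ is a composite of equivalences, hence an equivalence, and then so is the diagonal; this settles all three arrows at once.

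To see that $F$ is faithful, note that a homomorphism of real commutative algebraic groups $\mathsf{G}_{1}\to\mathsf{G}_{2}$ is, by Galois descent, the same as a homomorphism $\mathsf{G}_{1,\mathbb{C}}\to\mathsf{G}_{2,\mathbb{C}}$ of complex algebraic groups commuting with conjugation; since each $\mathsf{G}_{i,\mathbb{C}}=\mathfrak{g}_{i}/\Lambda_{i}$ is connected, such a homomorphism is determined by its differential, a $\mathbb{C}$-linear map $\mathfrak{g}_{1}\to\mathfrak{g}_{2}$ sending $\Lambda_{1}$ into $\Lambda_{2}$ and commuting with conjugation. This identifies $\mathrm{Hom}_{\mathcal{C}}(\mathsf{G}_{1},\mathsf{G}_{2})$ with a set of real linear maps $\varphi$ with $\varphi_{\mathbb{C}}(\Lambda_{1})\subseteq\Lambda_{2}$, giving faithfulness. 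For fullness, any $\mathcal{C}$-morphism carries $\mathsf{G}_{\mathrm{aff},1}$ into $\mathsf{G}_{\mathrm{aff},2}$ and $\mathsf{V}_{1}$ into $\mathsf{V}_{2}$ by maximality of these subgroups (the image of a vector group is a vector group and the image of an affine group is affine), so after differentiation it preserves both steps of the filtration and thus defines a morphism in $\mathcal{C}''$. Conversely, a $\mathcal{C}''$-morphism $\varphi$ yields a holomorphic homomorphism $\mathsf{G}_{1,\mathbb{C}}\to\mathsf{G}_{2,\mathbb{C}}$; here the condition $\varphi(\mathrm{Fil}^{v}\mathfrak{g}_{0,1})\subseteq\mathrm{Fil}^{v}\mathfrak{g}_{0,2}$ rules out the transcendental homomorphisms $\mathsf{G}_{a}\to\mathsf{G}_{m}$ (of the shape $z\mapsto e^{z}$), while GAGA (\cite{Se}) handles the abelian quotient, so the resulting map is algebraic and defined over $\mathbb{R}$. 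Hence $F$ is fully faithful.

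For essential surjectivity, given $(\mathfrak{g}_{0},\Lambda)\in\mathcal{C}''$ I would form the connected commutative complex Lie group $X=\mathfrak{g}/\Lambda$. The conditions $\mathrm{Fil}^{v}\mathfrak{g}\cap\Lambda=0$ and $(\mathrm{Fil}^{a}\mathfrak{g}\cap\Lambda)\otimes_{\mathbb{Z}}\mathbb{C}=\mathrm{Fil}^{a}\mathfrak{g}/\mathrm{Fil}^{v}\mathfrak{g}$ realize the image of $\mathrm{Fil}^{a}\mathfrak{g}$ in $X$ as a linear group, a product of copies of $\mathbb{C}$ and $\mathbb{C}^{\times}$, and the condition that $\Lambda/(\mathrm{Fil}^{a}\mathfrak{g}\cap\Lambda)$ be a full lattice realizes the quotient as the complex torus $\mathrm{A}_{\mathbb{C}}=(\mathfrak{g}/\mathrm{Fil}^{a}\mathfrak{g})/\Lambda_{a}$. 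The positive definite form $S$ supplies a Riemann form on $\mathrm{A}_{\mathbb{C}}$, so by the Appell--Humbert theorem (\cite[Chapter 1, Section 2]{Mu}) and GAGA (\cite{Se}) the torus $\mathrm{A}_{\mathbb{C}}$ is an abelian variety and $X$ underlies an algebraic group, an extension of $\mathrm{A}_{\mathbb{C}}$ by its linear part. Finally, the $\mathrm{Gal}(\mathbb{C}/\mathbb{R})$-stability of $\Lambda$ and the real form $\mathfrak{g}_{0}$ equip $X$ with a conjugation, and Galois descent produces a real connected commutative algebraic group $\mathsf{G}$ with $F(\mathsf{G})\cong(\mathfrak{g}_{0},\Lambda)$.

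The main obstacle is exactly this last step: passing from the analytically defined $X=\mathfrak{g}/\Lambda$ to a genuine algebraic group over $\mathbb{R}$. The essential inputs are the Appell--Humbert description of line bundles together with GAGA to algebraize the abelian part and the extension, and Galois descent for the real structure -- precisely the tools already used in constructing $K$, so in practice the verification runs parallel to that construction.
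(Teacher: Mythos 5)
Your reduction is valid as pure category theory: since the diagonal $K\circ F$ is an equivalence by Theorem~\ref{ccc}, proving that $F\colon \mathcal{C}\to\mathcal{C}''$ is an equivalence would settle all three arrows, and your faithfulness argument for $F$ is fine. The genuine gap is in the fullness of $F$, which is exactly the hard point and which you dispose of in one clause. Given a $\mathcal{C}''$-morphism $\varphi$, you correctly observe that the induced holomorphic homomorphism $f$ restricts to an algebraic map on the affine parts and induces (by GAGA/compactness) an algebraic map $\bar f\colon \mathsf{A}_{1,\mathbb{C}}\to\mathsf{A}_{2,\mathbb{C}}$ on the abelian quotients; but the conclusion ``so the resulting map is algebraic'' does not follow from these two facts by any soft argument. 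One must perform a d\'evissage: push $\mathsf{G}_{1,\mathbb{C}}$ forward along the affine-part map and pull $\mathsf{G}_{2,\mathbb{C}}$ back along $\bar f$, so that $f$ becomes an \emph{analytic} isomorphism between two \emph{algebraic} extensions of $\mathsf{A}_{1,\mathbb{C}}$ by the same affine group $\mathsf{H}$; one then needs (i) injectivity of the comparison map $\Ext_{\mathrm{alg}}(\mathsf{A}_{1,\mathbb{C}},\mathsf{H})\to \Ext_{\mathrm{an}}(\mathsf{A}_{1,\mathbb{C}},\mathsf{H})$ from algebraic to analytic extension classes, which rests on \cite{Ser}, \cite{Ros} and GAGA \cite{Se}, and (ii) the rigidity statement $\Hom(\mathsf{A}_{1,\mathbb{C}},\mathsf{H})=0$ to see that the analytic isomorphism of extensions coincides with the algebraic one. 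None of this appears in your text; ``GAGA handles the abelian quotient'' gives algebraicity of $\bar f$, not of $f$.

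It is worth noting that the paper never proves fullness of $F$ directly, precisely to avoid this. It instead proves the much cheaper statement that $K\colon\mathcal{C}''\to\mathcal{C}'$ is \emph{faithful} (a snake-lemma argument whose only input is $\Hom(\text{abelian variety},\text{affine group})=0$), after which fullness of $F$ is formal: given $g\colon F(c_1)\to F(c_2)$, fullness of the diagonal produces $f$ with $K(F(f))=K(g)$, and faithfulness of $K$ forces $F(f)=g$. Your essential-surjectivity step suffers from a milder version of the same problem: ``$X$ underlies an algebraic group'' requires algebraizing the total spaces \emph{and the group laws} of the two partial quotients $\mathfrak{n}$ and $\mathfrak{m}$ (the paper does this with GAGA together with \cite[Chapter 7, Theorem 5]{Ser}) before Galois descent can be applied to $\mathfrak{n}\times_{\mathrm{A}_{\mathbb{C}}}\mathfrak{m}$; you yourself flag this as ``the main obstacle'' and leave it unresolved. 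As written, the proposal is a plausible outline in which the two steps carrying the actual mathematical content are missing.
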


\begin{proof}
By Theorem \ref{ccc}, the arrow from $\mathcal{C}$ to $\mathcal{C}'$ is an equivalence of categories. So we only need to prove $\mathcal{C} \to \mathcal{C}''$ is essentially full and $\mathcal{C}'' \to \mathcal{C}'$ is faithful.

First we show $\mathcal{C}'' \to \mathcal{C}'$ is faithful.  Let $(\mathfrak{g}_{0,1},\Lambda_{1})$ and $(\mathfrak{g}_{0,2}, \Lambda_{2})$ be two filtered polarizable lattices. Let
    $$(V_{a,1},\Lambda_{a,1}, V_{v,1}, \Lambda_{t,1} ,\phi_{v,1}, \phi_{t,1})$$
and
    $$(V_{a,2},\Lambda_{a,2}, V_{v,2}, \Lambda_{t,2} ,\phi_{v,2}, \phi_{t,2})$$
be the marked polarizable lattices associated to them. Let  $\varphi$ be a morphism from $(\mathfrak{g}_{0,1},\Lambda_{1})$ to $(\mathfrak{g}_{0,2},\Lambda_{2})$. Suppose that $\varphi$ induces a zero map between the associated marked polarizable lattices, i.e., $\varphi$ is zero on $\mathrm{Fil}^{v}\mathfrak{g}_{0,1}$ and induces a zero map on $\mathrm{Fil}^{a}\mathfrak{g}_{1}\cap\Lambda_{1}$ and a zero map from $V_{a,1}\otimes_{\mathbb{R}}\mathbb{C}/\Lambda_{a,1}$ to $V_{a,2}\otimes_{\mathbb{R}}\mathbb{C}/\Lambda_{a,2}$. Then we have the following commutative diagram of short exact sequences of complex commutative algebraic groups:
    \begin{equation*}
        \xymatrix{
             0 \ar[r] & \mathrm{Fil}^{a}\mathfrak{g}_{1}/\mathrm{Fil}^{a}\mathfrak{g}_{1}\cap\Lambda_{1} \ar[d] \ar[r] & \mathfrak{g}_{1}/\Lambda_{1} \ar[d] \ar[r] & V_{a,1}\otimes_{\mathbb{R}}\mathbb{C}/\Lambda_{a,1} \ar[d] \ar[r] & 0  \\
          0 \ar[r] & \mathrm{Fil}^{a}\mathfrak{g}_{2}/\mathrm{Fil}^{a}\mathfrak{g}_{2}\cap\Lambda_{2}  \ar[r] & \mathfrak{g}_{2}/\Lambda_{2} \ar[r] & V_{a,2}\otimes_{\mathbb{R}}\mathbb{C}/\Lambda_{a,2} \ar[r] & 0,
        }
    \end{equation*}
the vertical maps are $\varphi\otimes_{\mathbb{R}}\mathbb{C}$. By our assumption, the first and the last vertical maps are zero. By snake lemma the middle vertical map induces a map from $V_{a,1}\otimes_{\mathbb{R}}\mathbb{C}/\Lambda_{a,1}$ to $\mathrm{Fil}^{a}V_{2}/\Lambda_{t,2}$. Since $V_{a,1}\otimes_{\mathbb{R}}\mathbb{C}/\Lambda_{a,1}$ is an abelian variety and $\mathrm{Fil}^{a}\mathfrak{g}_{2}/\mathrm{Fil}^{a}\mathfrak{g}_{2}\cap\Lambda_{2}$ is an affine group, the map must be zero. So $\varphi$ is zero.

Then we prove $\mathcal{C} \to \mathcal{C}''$ is essentially full. For any filtered polarizable lattice $(\mathfrak{g}_{0}, \Lambda)$, recall the notations in the construction of the additive functor $\mathcal{C}''\to \mathcal{C}'$ in this section. We have the following  exact sequence :
    $$0 \to \mathrm{Fil}^{v}\mathfrak{g} \to \mathfrak{n} \to \mathrm{A}_{\mathbb{C}} \to 0.$$
By Lefschetz theorem \cite[Chapter 1, Section 3]{Mu}, $\mathrm{A}_{\mathbb{C}}$ is an abelian variety. The exact sequence gives $\mathfrak{n}$ a structure of principal $\mathrm{Fil}^{v}\mathfrak{g}$-bundle on $\mathrm{A}_{\mathbb{C}}$. By GAGA \cite{Se}, there is a unique algebraic structure on $\mathfrak{n}$ compatible with the structure of principal $\mathrm{Fil}^{v}\mathfrak{g}$-bundle on $\mathrm{A}_{\mathbb{C}}$. One can define a unique algebraic group structure on $\mathfrak{n}$ with its structure of principal $\mathrm{Fil}^{v}\mathfrak{g}$-bundle of $\mathrm{A}_{\mathbb{C}}$ by \cite[Chapter 7, Theorem 5]{Ser}. Moreover, the argument in the proof of \cite[Chapter 7, Theorem 5]{Ser} can be used to show that there exists at least one complex commutative Lie group structure on $\mathfrak{n}$ compatible with its structure of principal $\mathrm{Fil}^{v}\mathfrak{g}$-bundle on $\mathrm{A}_{\mathbb{C}}$. So $\mathfrak{n}$ is algebraic. Similarly, the group $\mathfrak{m}$ is also algebraic. By Galois descent, the algebraic group $\mathfrak{n}\times_{\mathrm{A}_{\mathbb{C}}} \mathfrak{m}$ can be defined over $\mathbb{R}$. Hence the functor is essentially full.
\end{proof}

In particular, we have the following theorem:
\begin{thmp}\label{equivalence2}
The category of real connected commutative algebraic groups is naturally equivalent to the category of filtered polarizable lattices.
\end{thmp}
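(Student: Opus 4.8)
The plan is to deduce the statement directly from the preceding proposition, which asserts that every arrow in the commutative triangle relating $\mathcal{C}$, $\mathcal{C}'$ and $\mathcal{C}''$ is an equivalence of categories. The equivalence claimed here between the category of real connected commutative algebraic groups and the category of filtered polarizable lattices is precisely the arrow $\mathcal{C} \to \mathcal{C}''$ of that diagram, so nothing further is logically required. To make the deduction self-contained rather than quoting the proposition as a black box, I would reconstruct it through the standard two-out-of-three principle for the commuting triangle $\mathcal{C} \xrightarrow{F} \mathcal{C}'' \xrightarrow{G} \mathcal{C}'$, whose composite $H = G \circ F$ is the functor of Theorem \ref{ccc} and is therefore an equivalence.

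First I would record the two inputs supplied by the preceding proposition: that $F$ is essentially surjective (the ``essentially full'' assertion, obtained by realizing each filtered polarizable lattice as the fibre product $\mathfrak{n} \times_{\mathrm{A}_{\mathbb{C}}} \mathfrak{m}$ via GAGA and Galois descent), and that $G$ is faithful. From these I would derive that $F$ is fully faithful. Faithfulness of $F$ is immediate: if $Ff_1 = Ff_2$ then $Hf_1 = Hf_2$, and faithfulness of $H$ forces $f_1 = f_2$.

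For fullness, given a morphism $\psi : Fc_1 \to Fc_2$ in $\mathcal{C}''$, I would apply $G$ to obtain $G\psi : Hc_1 \to Hc_2$; since $H$ is full there is a morphism $f$ with $Hf = G\psi$, that is $G(Ff) = G\psi$, and faithfulness of $G$ then yields $Ff = \psi$. Combining full faithfulness with the assumed essential surjectivity shows that $F$ is an equivalence, which is exactly the assertion of the theorem.

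The genuine content lies upstream, in the preceding proposition rather than in the present deduction. I expect the main obstacle to be the essential surjectivity of $F$: one must promote the analytic principal bundles $\mathfrak{n}$ and $\mathfrak{m}$ over $\mathrm{A}_{\mathbb{C}}$ to complex algebraic groups (via GAGA together with \cite[Chapter 7, Theorem 5]{Ser}) and then descend the algebraic group $\mathfrak{n} \times_{\mathrm{A}_{\mathbb{C}}} \mathfrak{m}$ from $\mathbb{C}$ to $\mathbb{R}$ by Galois descent. Once that is secured, the passage to Theorem \ref{equivalence2} is purely formal.
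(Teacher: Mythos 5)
Your proposal is correct and follows the paper's own route: the paper states this theorem as an immediate consequence (``In particular'') of the preceding proposition, whose proof likewise combines the equivalence $\mathcal{C}\to\mathcal{C}'$ of Theorem \ref{ccc} with essential surjectivity of $\mathcal{C}\to\mathcal{C}''$ and faithfulness of $\mathcal{C}''\to\mathcal{C}'$. Your only addition is to spell out explicitly the two-out-of-three categorical bookkeeping that the paper leaves implicit, which is a harmless (indeed clarifying) elaboration of the same argument.
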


In fact, we can construct the inverse of the functor $\mathcal{C''} \to \mathcal{C'}$ as follows. Let $(V_{a},\Lambda_{a}, V_{v}, \Lambda_{t} ,\phi_{v}, \phi_{t})$ be a marked polarizable lattice. Let $\mathfrak{g}_{0}$ be the real vector space
    $$V_{a}\oplus V_{v}^{\check{}}\oplus (\Lambda_{t}^{\check{}}\otimes_{\mathbb{Z}}\mathbb{C})^{\sigma=1},$$
where $\sigma$ is the conjugate map in $\mathrm{Gal}(\mathbb{C}/\mathbb{R})$. Define a two-step filtration on $\mathfrak{g}_{0}$ by
    $$\mathrm{Fil}^{v}\mathfrak{g}_{0}=V_{v}^{\check{}}, \quad \quad \mathrm{Fil}^{a}V_{0}= V_{v}^{\check{}}\oplus (\Lambda_{t}^{\check{}}\otimes_{\mathbb{Z}}\mathbb{C})^{\sigma=1}.$$
Then the complex vector space $\mathfrak{g}$ is
    $$(V_{a}\otimes_{\mathbb{R}}\mathbb{C})\oplus (V_{v}^{\check{}}\otimes_{\mathbb{R}}\mathbb{C}) \oplus (\Lambda_{t}^{\check{}}\otimes_{\mathbb{Z}}\mathbb{C}).$$
The map $\phi_{v}$ naturally induces an anti-linear map $\phi_{v,\mathbb{C}}^{\check{}}$ from the complex space $V_{a}\otimes_{\mathbb{R}}\mathbb{C}$ to the complex space $\mathrm{Hom}_{\mathbb{C}}(V_{v}\otimes_{\mathbb{R}}\mathbb{C},\mathbb{C})$. We denote its restriction on $\Lambda_{a}$ also by $\phi_{v,\mathbb{C}}^{\check{}}$. The map $\phi_{t}$ naturally induces a $\mathbb{Z}$-module homomorphism $\phi_{t}^{\check{}}$ from the lattice $\Lambda_{a}$ to $(\Lambda_{t}^{\check{}}\otimes_{\mathbb{Z}}\mathbb{C})/(\sqrt{-1}\Lambda_{t}^{\check{}})$. Both $\phi_{v,\mathbb{C}}^{\check{}}$ and $\phi_{t}^{\check{}}$ are  $\mathrm{Gal}(\mathbb{C}/\mathbb{R})$-equivariant. Consider the graph $\Lambda_{\phi}$ of $\phi_{v,\mathbb{C}}^{\check{}}\oplus \phi_{t}^{\check{}}$ in
    $$(V_{a}\otimes_{\mathbb{R}}\mathbb{C})\oplus (V_{v}^{\check{}}\otimes_{\mathbb{R}}\mathbb{C}) \oplus ((\Lambda_{t}^{\check{}}\otimes_{\mathbb{Z}}\mathbb{C})/(\sqrt{-1}\Lambda_{t}^{\check{}})).$$
Denote its preimage in $\mathfrak{g}$ by $\Lambda$. Then the pair $(\mathfrak{g}_{0},\Lambda)$ is the filtered polarizable lattice associated with $(V_{a},\Lambda_{a}, V_{v}, \Lambda_{t} ,\phi_{v}, \phi_{t})$.

\section{Classification of simply connected commutative locally Nash Groups}\label{scclng}

In this section, we classify simply connected commutative locally Nash groups. First, for any real commutative algebraic group
$\mathsf{G}$, the identity component of $\mathsf{G}(\mathbb{R})$, denoted by $\mathsf{G}(\mathbb{R})^{0}$, has a natural  Nash group structure. Thus it naturally induces a locally Nash structure on the universal covering of $\mathsf{G}(\mathbb{R})^{0}$. Moreover, we have the following proposition.

\begin{prp}\label{surj}
Every simply connected commutative locally Nash group is locally Nash equivalent to the universal covering of $\mathsf{G}(\mathbb{R})^{0}$, for some real connected commutative algebraic group $\mathsf{G}$.
\end{prp}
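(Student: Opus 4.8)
The plan is to reduce the statement to the local structure theory of commutative locally Nash groups and then invoke the algebraization results already available for Nash groups. Let $G$ be a simply connected commutative locally Nash group. First I would pass to its Lie algebra $\g_0$ together with the exponential map: since $G$ is simply connected and commutative, $G$ is locally Nash equivalent to $(\g_0,+)$ as a locally Nash manifold near the identity, and the whole group structure is recovered from the germ of the group law at the identity. The essential point is that a locally Nash group is determined, up to locally Nash equivalence, by its \emph{germ} at the identity, i.e. by an \emph{analytic} (indeed Nash) group germ. So the task becomes: show that every commutative Nash group germ of dimension $n$ arises as the germ at the identity of $\sG(\BR)^0$ for some real connected commutative algebraic group $\sG$.

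Next I would use the classification of Nash group germs in the affine setting. By the algebraization theory of \cite{FS}, \cite{HP1}, \cite{HP2}, affine Nash groups are exactly finite covers of real algebraic groups, and in particular every Nash group germ is the germ of (the identity component of the real points of) a real algebraic group. The key observation is that the germ at the identity only sees the affine--algebraic data: two algebraic groups with locally isomorphic real points have the same germ, and conversely every commutative Nash germ is realized by such a $\sG(\BR)^0$. Concretely, I would take a Nash chart around the identity of $G$, recognize the resulting commutative Nash group germ, and appeal to these algebraization results to produce a real connected commutative algebraic group $\sG$ whose identity component $\sG(\BR)^0$ has the same germ at the identity as $G$.

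Finally, I would upgrade the agreement of germs to a global locally Nash equivalence using simple connectivity. Both $G$ and the universal covering $\widetilde{\sG(\BR)^0}$ are simply connected commutative locally Nash groups with isomorphic germs at the identity; the germ isomorphism is a local homomorphism defined near the identity, and on a simply connected commutative group a local homomorphism extends uniquely to a global homomorphism by analytic continuation along paths (the standard monodromy argument, which needs only commutativity to guarantee that the continuation is single-valued and compatible with the group law). Since the extension is locally a Nash diffeomorphism and both groups are connected, it is a locally Nash isomorphism onto $\widetilde{\sG(\BR)^0}$.

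The main obstacle I anticipate is the middle step: producing the algebraic group $\sG$ realizing a given commutative Nash germ, and in particular handling the non-affine (abelian-variety and anti-affine) directions, where the germ no longer comes from a genuine algebraic group structure on $G$ itself but only on the complexification $\sG_{\BC}$. Here one cannot directly apply the affine algebraization of \cite{FS}; instead one must recognize that the germ data is captured by a filtered polarizable lattice (as developed in Sections \ref{rcag1} and \ref{rcag2}), which by Theorem \ref{equivalence2} always arises from a real connected commutative algebraic group. Thus the real content is to show the Nash germ of $G$ determines, and is determined by, such lattice data, after which the realization is automatic.
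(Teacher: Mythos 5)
Your reduction to the germ of the group law at the identity and your final monodromy step are both sound---indeed the paper's proof ends with exactly that extension, implemented concretely by the dilation $\varphi_{n}(x)=2^{n}\varphi(x/2^{n})$ on $2^{n}V$ (and note that single-valuedness of the extension comes from simple connectivity, not from commutativity). But your middle step has a genuine gap, and it is precisely where the content of the proposition lies. The results you cite from \cite{FS}, \cite{HP1}, \cite{HP2} concern \emph{affine Nash} groups, i.e.\ definable (semialgebraic) groups; the germ of $G$ at the identity is not a priori the germ of any such group, because $G$ itself is only locally Nash and need not admit a finite semialgebraic atlas (think of the universal covering of $\mathrm{SO}(2,\mathbb{R})$). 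So your ``in particular every Nash group germ is the germ of a real algebraic group'' is a non sequitur: it restates the proposition rather than deducing it. Your proposed fallback is circular as well: in this paper the filtered polarizable rational space attached to a simply connected commutative locally Nash group is \emph{defined through} Proposition \ref{surj} (one first realizes $G$ as the universal covering of some $\mathsf{G}(\mathbb{R})^{0}$, then takes the lattice $\Lambda=\pi_{1}(\mathsf{G}_{\mathbb{C}})$), so you cannot invoke Theorem \ref{equivalence2} to produce $\mathsf{G}$ from the germ without already having the algebraization you are trying to prove.

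The paper closes this gap with a compactification trick absent from your proposal. Since $G$ is simply connected and commutative, it is isomorphic to $\mathbb{R}^{n}$ as a Lie group; quotienting by a full lattice gives a compact torus $T$ carrying the induced locally Nash structure, and compactness forces $T$ to be an honest Nash group (finite atlas), hence a definable group to which Theorem A of \cite{HP1} applies directly. That theorem produces a connected real algebraic group $\mathsf{G}$ together with a Nash homeomorphism $\varphi$ between neighborhoods of the identity in $T$ and in $\mathsf{G}(\mathbb{R})$ satisfying $\varphi(x+y)=\varphi(x)+\varphi(y)$ on a smaller neighborhood; commutativity of $\mathsf{G}$ then follows from the induced Lie algebra isomorphism. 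Since $T$ and $G$ have the same germ at the identity, this supplies exactly the local algebraization your argument needs, after which your extension step (or the paper's dilation argument) finishes the proof. Without this reduction to a definable group---or an explicit appeal to a group-chunk version of Hrushovski--Pillay valid for semialgebraic local groups---your proposal does not go through.
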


\begin{proof}
Let $G$ be a simply connected commutative locally Nash group. As a Lie group, it is isomorphic to $\mathbb{R}^{n}$ for some $n$. Modulo a full lattice, we get a torus $T$ which is compact. The locally Nash structure on $G$ induces a Nash structure on $T$. In this way, $T$ becomes a Nash group.

By \cite[Theorem A]{HP1}, we can find a connected algebraic group $\mathsf{G}$ defined over $\mathbb{R}$ such that there exist open neighborhoods $U,V$ of the unity of $T$, open neighborhoods $U',V'$ of the unity of $\mathsf{G}(\mathbb{R})$ and a Nash homeomorphism $\varphi$ from $V$ to $V'$ satisfying:
\begin{itemize}
        \item $U+U = V$.
        \item $\varphi(U)=U', \varphi(V)=V'$.
        \item $\varphi(x+y)=\varphi(x)+\varphi(y)$ for any $x,y \in U$.
\end{itemize}
It induces an isomorphism between the Lie algebras of $T$ and $\mathsf{G}(\mathbb{R})$. So $\mathsf{G}$ is a real commutative group.

By shrinking $U,V$, we may identify them with open neighborhoods of the unity in $G$ and identify $U', V'$ with open neighborhoods of the unity in the universal covering of $\mathsf{G}(\mathbb{R})^{0}$. Moreover, we could assume $U$ (which is not necessarily semi-algebraic) is convex, which ensures $2^{n-1}V$ is contained in $2^{n}V$ for any natural number $n$. We extend $\varphi$ to a map $\varphi_{n}$ on $2^{n}V$ by
$$ \varphi_{n}(x)=2^{n}\varphi(\frac{x}{2^{n}}).$$ The map $\varphi_{n}$ is a homeomorphism from $2^{n}V$ to $2^{n}V'$. By induction, we could prove that $$\varphi_{n}|_{2^{n-1}V}=\varphi_{n-1}, \quad \varphi_{n}(x+y)=\varphi_{n}(x)+\varphi_{n}(y)$$for any $x, y\in 2^{n}U$. Putting all $n$ together, thus we get a commutative Lie group isomorphism from $G$ to the universal covering group of $\mathsf{G}(\mathbb{R})^{0}$. It is a locally Nash map around the unity with respect to the original locally Nash structure on $G$. So  it is a locally Nash equivalence between the simply connected group $G$ and the universal covering of $\mathsf{G}(\mathbb{R})^{0}$.
\end{proof}

Then we study locally Nash maps between two simply connected commutative locally Nash groups. Let $\mathsf{G}_{1}, \mathsf{G}_{2}$ be two real connected commutative algebraic groups and let $G_{1}, G_{2}$ be the universal coverings of
$\mathsf{G}_{1}(\mathbb{R})^{0}$ and $\mathsf{G}_{2}(\mathbb{R})^{0}$, respectively. Let $\varphi$ be a locally Nash map from $G_{1}$ to $G_{2}$.  Then we have the following proposition.


\begin{prp}\label{inj}
There exists a real connected commutative algebraic group $\mathsf{G}_{3}$ with an isogeny to $\mathsf{G}_{1}$, an algebraic map to $\mathsf{G}_{2}$ and a covering map from $G_{1}$ to $\mathsf{G}_{3}(\mathbb{R})^{0}$ commutating with $\varphi$. That is to say that we have the following commutative diagram
\begin{equation*}
  \xymatrix{
  G_{1} \ar[d] \ar[r]^{\varphi} & G_{2} \ar[d] \\
  \mathsf{G}_{3}(\mathbb{R})^{0} \ar[d] \ar[r] & \mathsf{G}_{2}(\mathbb{R})^{0}  \\
  \mathsf{G}_{1}(\mathbb{R})^{0} &    }
\end{equation*}
with the vertical arrow from $G_{1}$ to $\mathsf{G}_{3}(\mathbb{R})^{0}$ being a covering map, the vertical arrow from
$\mathsf{G}_{3}(\mathbb{R})^{0}$ to $\mathsf{G}_{1}(\mathbb{R})^{0}$ induced from an isogeny and the horizontal arrow from $\mathsf{G}_{3}(\mathbb{R})^{0}$ to $\mathsf{G}_{2}(\mathbb{R})^{0}$ induced from an algebraic group homomorphism.
\end{prp}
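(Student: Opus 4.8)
The plan is to pass to the complexified Lie algebras, where the local Nash condition on $\varphi$ translates into an algebraicity (finite-branching) condition on a linear map, and then to read off $\mathsf{G}_3$ from a finite-index sublattice of the period lattice of $\mathsf{G}_1$. We may assume $\varphi$ is a locally Nash group homomorphism, which is the relevant case. By Proposition \ref{surj} and the construction of Section \ref{rcag2}, identify $G_i$ with the real Lie algebra $\mathfrak{g}_{0,i}=\mathfrak{g}_i^{\sigma}$ of $\mathsf{G}_i(\mathbb{R})^0$ via the exponential, so that the covering map $p_i\colon G_i\to\mathsf{G}_i(\mathbb{R})^0$ becomes the restriction to $\mathfrak{g}_{0,i}$ of the quotient $\mathfrak{g}_i\to\mathfrak{g}_i/\Lambda_i=\mathsf{G}_i(\mathbb{C})$, where $(\mathfrak{g}_{0,i},\Lambda_i)$ is the filtered polarizable lattice attached to $\mathsf{G}_i$. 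Since $\varphi$ is a continuous homomorphism between simply connected abelian Lie groups it is $\mathbb{R}$-linear; write $\varphi\colon\mathfrak{g}_{0,1}\to\mathfrak{g}_{0,2}$ and let $\varphi_{\mathbb{C}}\colon\mathfrak{g}_1\to\mathfrak{g}_2$ be its complexification, a $\sigma$-equivariant $\mathbb{C}$-linear map.

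The key step is to prove $\varphi_{\mathbb{C}}(\Lambda_1)\subseteq\Lambda_2\otimes_{\mathbb{Z}}\mathbb{Q}$. Because the local Nash structures on the $G_i$ are pulled back from $\mathsf{G}_i(\mathbb{R})^0$ along the local diffeomorphisms $p_i$, the composite $p_2\circ\varphi$ descends, on a neighborhood of the identity, to a Nash homomorphism $f$ from $\mathsf{G}_1(\mathbb{R})^0$ to $\mathsf{G}_2(\mathbb{R})^0$. Each coordinate of $f$ satisfies a nontrivial polynomial relation, so its complexification is given near the identity by an algebraic map between $\mathsf{G}_{1,\mathbb{C}}$ and $\mathsf{G}_{2,\mathbb{C}}$, and in particular has only finitely many branches under analytic continuation; on Lie algebras this germ is exactly $\varphi_{\mathbb{C}}$, so continuing it around a loop $\lambda\in\Lambda_1=\pi_1(\mathsf{G}_{1,\mathbb{C}})$ translates its value by $\varphi_{\mathbb{C}}(\lambda)\bmod\Lambda_2$. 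Hence the branches at a point form the orbit of the subgroup $\varphi_{\mathbb{C}}(\Lambda_1)\bmod\Lambda_2$ of $\mathfrak{g}_2/\Lambda_2$, and finiteness of the branch set forces this subgroup to be finite, i.e. $\varphi_{\mathbb{C}}(\Lambda_1)\subseteq\Lambda_2\otimes_{\mathbb{Z}}\mathbb{Q}$. I expect this finite-branching argument to be the main obstacle, as it is the only place the hypothesis is used: it is precisely the passage from the real Nash condition to integrality of exponents, which already in the one-dimensional multiplicative case is the statement that $z\mapsto z^{\alpha}$ is Nash only for rational $\alpha$.

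Granting this, the remainder is formal. Set $\Lambda_3:=\{\lambda\in\Lambda_1 : \varphi_{\mathbb{C}}(\lambda)\in\Lambda_2\}$. As $\Lambda_1$ is finitely generated and $\varphi_{\mathbb{C}}(\Lambda_1)\subseteq\Lambda_2\otimes\mathbb{Q}$, the quotient $\Lambda_1/\Lambda_3$ is finitely generated and torsion, hence finite; moreover $\Lambda_3$ is $\sigma$-stable because $\varphi_{\mathbb{C}}$, $\Lambda_1$ and $\Lambda_2$ are. The finite-index inclusion $\Lambda_3\subseteq\Lambda_1$ exhibits $\mathfrak{g}_1/\Lambda_3$ as a finite cover of the algebraic group $\mathsf{G}_{1,\mathbb{C}}$, hence itself algebraic, and $\varphi_{\mathbb{C}}$ induces a holomorphic homomorphism $\mathfrak{g}_1/\Lambda_3\to\mathfrak{g}_2/\Lambda_2$ which is algebraic by GAGA as in Section \ref{rcag2}. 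Galois descent along $\sigma$, exactly as in the proof there that $\mathfrak{n}\times_{\mathrm{A}_{\mathbb{C}}}\mathfrak{m}$ is defined over $\mathbb{R}$, produces a real connected commutative algebraic group $\mathsf{G}_3$ with an isogeny $\mathsf{G}_3\to\mathsf{G}_1$ (from $\Lambda_3\subseteq\Lambda_1$) and an algebraic homomorphism $\mathsf{G}_3\to\mathsf{G}_2$ (from $\varphi_{\mathbb{C}}$); equivalently these arise from Theorem \ref{equivalence2} applied to the filtered polarizable lattice $(\mathfrak{g}_{0,1},\Lambda_3)$, the compatibility of $\varphi$ with the filtrations being automatic since an algebraic homomorphism carries the maximal affine subgroup and the vector subgroup of $\mathsf{G}_3$ into those of $\mathsf{G}_2$.

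Finally I would take real points and identity components. Since $\Lambda_3\cap\mathfrak{g}_{0,1}$ has finite index in $\Lambda_1\cap\mathfrak{g}_{0,1}$, the space $\mathfrak{g}_{0,1}=G_1$ is the universal covering of $\mathsf{G}_3(\mathbb{R})^0=(\mathfrak{g}_1/\Lambda_3)^{\sigma,0}$, which furnishes the covering map $G_1\to\mathsf{G}_3(\mathbb{R})^0$ and, composed with the isogeny, recovers $p_1$. Every arrow in the asserted diagram is induced by $\varphi_{\mathbb{C}}$ (respectively by the identity on $\mathfrak{g}_1$) modulo the relevant lattice, so both the triangle and the square commute, completing the proof.
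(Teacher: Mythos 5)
Your argument has a genuine gap, and it is fatal rather than cosmetic: the only place you use the locally Nash hypothesis is the monodromy step, which extracts the condition $\varphi_{\mathbb{C}}(\Lambda_1)\subseteq\Lambda_2\otimes_{\mathbb{Z}}\mathbb{Q}$, and this condition is strictly weaker than what algebraicity of the induced map requires. The failure is visible already when $\mathsf{G}_1=\mathsf{G}_a$: then $\mathsf{G}_{1,\mathbb{C}}(\mathbb{C})=\mathbb{C}$ is simply connected, $\Lambda_1=0$, your key step is vacuous, and your construction would apply verbatim to \emph{every} $\sigma$-equivariant linear $\varphi$, in particular to the one inducing $z\mapsto e^{z}\colon \mathsf{G}_a(\mathbb{C})\to\mathsf{G}_m(\mathbb{C})$, a holomorphic homomorphism defined over $\mathbb{R}$ that is not algebraic. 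This pinpoints the wrong step: a holomorphic homomorphism between non-proper algebraic groups is not ``algebraic by GAGA'' (GAGA needs properness), and your fallback justification of the filtration compatibility --- ``automatic since an algebraic homomorphism carries the maximal affine subgroup and the vector subgroup of $\mathsf{G}_3$ into those of $\mathsf{G}_2$'' --- is circular, because the algebraic homomorphism $\mathsf{G}_3\to\mathsf{G}_2$ is exactly what you are constructing. In the paper's dictionary (Theorem \ref{equivalence2}), a morphism of filtered polarizable lattices must also satisfy $\varphi(\mathrm{Fil}^{v}\mathfrak{g}_{0,1})\subseteq\mathrm{Fil}^{v}\mathfrak{g}_{0,2}$ and $\varphi(\mathrm{Fil}^{a}\mathfrak{g}_{0,1})\subseteq\mathrm{Fil}^{a}\mathfrak{g}_{0,2}$; these conditions encode transcendence statements (that $\exp$, $\log$, $\wp$ are not Nash) which monodromy cannot see. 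Indeed even with nontrivial, finite monodromy the conclusion can fail: for a real elliptic curve with period lattice $\Lambda_2=\mathbb{Z}+a\sqrt{-1}\mathbb{Z}$, the map $\varphi(x)=\frac{a}{2\pi}x$ satisfies $\varphi_{\mathbb{C}}(\Lambda_1)\subseteq\Lambda_2$ for $\mathsf{G}_1=\mathsf{G}_m$, yet induces the single-valued, non-algebraic homomorphism $t\mapsto \frac{\log t}{2\pi\sqrt{-1}}\,a\sqrt{-1} \bmod \Lambda_2$. So the locally Nash hypothesis must be invoked a second time, in the simply connected (vector and torus) directions, and your proof has no mechanism for this.

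There is also a smaller defect inside the monodromy step itself: since $\mathsf{G}_{2,\mathbb{C}}$ is in general not affine, the branch values at a point need not remain in any fixed affine chart (the group $\varphi_{\mathbb{C}}(\Lambda_1)\bmod\Lambda_2$ could a priori be infinite and discrete, like the image of $\sqrt{-1}\mathbb{Z}$ in $\mathbb{C}/\mathbb{Z}$), so ``each coordinate satisfies a polynomial relation, hence finitely many branches'' is not a complete argument; one needs the Zariski closure of the graph and a dimension count. That is exactly the paper's proof, and it repairs both problems at once: the graph map $G_1\to(\mathsf{G}_1\times_{\mathrm{Spec}(\mathbb{R})}\mathsf{G}_2)(\mathbb{R})^{0}$ has injective differential and locally semi-algebraic image of dimension $\dim G_1$ by \cite[Proposition 2.8.8]{BCR}, so its Zariski closure $\mathsf{G}_3$ is an algebraic subgroup of dimension $\dim G_1$, and the two projections immediately furnish the isogeny $\mathsf{G}_3\to\mathsf{G}_1$ and the algebraic homomorphism $\mathsf{G}_3\to\mathsf{G}_2$, treating the lattice directions and the vector/torus directions uniformly. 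If you want to keep your lattice-theoretic bookkeeping, it must be supplemented by precisely this graph argument (or an equivalent transcendence input) to obtain the filtration conditions.
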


\begin{proof}
In this setting, we have a locally Nash map from the locally Nash group $G_{1}$ to $(\mathsf{G}_{1} \times_{\mathrm{Spec}(\mathbb{R})} \mathsf{G}_{2})(\mathbb{R})^{0}$. Its derivation is an injective Lie algebra homomorphism. The image is a semi-algebraic sets of the same dimension as $G_{1}$ by \cite[Proposition 2.8.8]{BCR}. Then let $\mathsf{G}_{3}$ be the Zariski closure of image of $G_{1}$. There exists a natural algebraic group structure on $\mathsf{G}_{3}$ and we have the following commutative diagram
\begin{equation*}
    \xymatrix{
      G_{1}  \ar[d]\ar[rd]^{\varphi}& \\
      \mathsf{G}_{3}(\mathbb{R})^{0}\ar@{^{(}->}[d] & G_{2} \ar[d]  \\
      (\mathsf{G}_{1}\times_{\mathrm{Spec}(\mathbb{R})}\mathsf{G}_{2})(\mathbb{R})^{0}\ar[r]\ar[d] & \mathsf{G}_{2}(\mathbb{R})^{0}.  \\
      \mathsf{G}_{1}(\mathbb{R})^{0}& \\
    }
\end{equation*}
Then we finish the proof.
\end{proof}

The following theorem follows immediately from Proposition \ref{inj}.

\begin{thmp}
The locally Nash equivalence classes of simply connected commutative locally Nash groups are one-to-one corresponding to the isogeny classes of real connected commutative algebraic groups.
\end{thmp}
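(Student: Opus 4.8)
The plan is to exhibit the claimed bijection as the map sending a real connected commutative algebraic group $\mathsf{G}$ to the locally Nash equivalence class of the universal covering of $\mathsf{G}(\mathbb{R})^{0}$, and then to verify in turn that this assignment is surjective, that it descends to isogeny classes, and that the induced map on isogeny classes is injective. Surjectivity is exactly the content of Proposition \ref{surj}: every simply connected commutative locally Nash group is locally Nash equivalent to the universal covering of $\mathsf{G}(\mathbb{R})^{0}$ for a suitable $\mathsf{G}$.

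To see that the assignment descends to isogeny classes, I would check that an isogeny $\mathsf{G}\to\mathsf{G}'$ induces a locally Nash equivalence of the corresponding universal coverings. Indeed, an isogeny is a surjective algebraic homomorphism with finite kernel, so its differential is an isomorphism of Lie algebras and its restriction to real points is a Nash homomorphism $\mathsf{G}(\mathbb{R})^{0}\to\mathsf{G}'(\mathbb{R})^{0}$ that is a local diffeomorphism, hence a covering map of connected Lie groups with isomorphic Lie algebras. Lifting to universal coverings produces a Lie group isomorphism which is locally Nash near the identity, i.e.\ a locally Nash equivalence. Since any two isogenous groups are linked by a chain of such isogenies, the equivalence class of the universal covering depends only on the isogeny class of $\mathsf{G}$.

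For injectivity, suppose the universal coverings $G_{1}$ and $G_{2}$ of $\mathsf{G}_{1}(\mathbb{R})^{0}$ and $\mathsf{G}_{2}(\mathbb{R})^{0}$ are locally Nash equivalent via an isomorphism $\varphi$. Applying Proposition \ref{inj} to $\varphi$ yields a real connected commutative algebraic group $\mathsf{G}_{3}$, an isogeny $\mathsf{G}_{3}\to\mathsf{G}_{1}$, an algebraic homomorphism $\mathsf{G}_{3}\to\mathsf{G}_{2}$, and a covering $G_{1}\to\mathsf{G}_{3}(\mathbb{R})^{0}$ fitting into the displayed commutative diagram. It then remains to upgrade the algebraic homomorphism $\mathsf{G}_{3}\to\mathsf{G}_{2}$ to an isogeny. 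Here I would use that $\varphi$ is an equivalence, hence a diffeomorphism, so that $\dim\mathsf{G}_{3}=\dim G_{1}=\dim G_{2}=\dim\mathsf{G}_{2}$. Chasing the diagram, the composite $G_{1}\to\mathsf{G}_{3}(\mathbb{R})^{0}\to\mathsf{G}_{2}(\mathbb{R})^{0}$ coincides with $\varphi$ followed by the universal covering $G_{2}\to\mathsf{G}_{2}(\mathbb{R})^{0}$, hence is itself a covering map; since the first arrow $G_{1}\to\mathsf{G}_{3}(\mathbb{R})^{0}$ is a covering, the second arrow $\mathsf{G}_{3}(\mathbb{R})^{0}\to\mathsf{G}_{2}(\mathbb{R})^{0}$ is a surjective homomorphism with discrete kernel, i.e.\ a covering map. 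Consequently $\mathsf{G}_{3}\to\mathsf{G}_{2}$ has an isomorphism as differential, a finite kernel, and a closed connected image of full dimension in the connected group $\mathsf{G}_{2}$; therefore it is surjective and is an isogeny. Thus $\mathsf{G}_{3}$ is isogenous to both $\mathsf{G}_{1}$ and $\mathsf{G}_{2}$, placing them in a common isogeny class.

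The main obstacle is precisely this last upgrade: Proposition \ref{inj} only provides an algebraic homomorphism $\mathsf{G}_{3}\to\mathsf{G}_{2}$, and one must exploit the invertibility of $\varphi$ as a locally Nash map to force both surjectivity and finiteness of the kernel. The dimension count controls finiteness of the kernel once surjectivity onto the identity component is in hand, while the covering-map property of $\mathsf{G}_{3}(\mathbb{R})^{0}\to\mathsf{G}_{2}(\mathbb{R})^{0}$ extracted from the diagram supplies that surjectivity; the one delicate point requiring care is the passage from this real-analytic covering statement to surjectivity of the algebraic homomorphism, which follows because the algebraic image is a closed connected subgroup of full dimension in the connected group $\mathsf{G}_{2}$.
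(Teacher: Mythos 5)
Your proposal is correct and follows essentially the same route as the paper: the paper simply states that the theorem "follows immediately from Proposition \ref{inj}" (together with Proposition \ref{surj} for surjectivity), and your argument is the natural, careful expansion of exactly that reasoning — including the one point the paper leaves implicit, namely upgrading the algebraic map $\mathsf{G}_{3}\to\mathsf{G}_{2}$ to an isogeny via the dimension count and the closedness of the image. No gaps; your write-up supplies details the paper omits.
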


In Sections \ref{rcag1} and \ref{rcag2}, we have classified the real connected commutative algebraic groups in terms of marked polarizable lattices and filtered polarizable lattices. To parametrize the isogeny classes, we only need to slightly modify the definition of filtered polarizable lattices. Consider the category defined as follows. A filtered polarizable rational space is a pair
$(\mathfrak{g}_{0},\Lambda_{r})$ where
\begin{itemize}
    \item $\mathfrak{g}_{0}$ is a real finite-dimensional vector space with a two-step filtration
    $$\mathrm{Fil}^{v} \mathfrak{g}_{0} \subseteq \mathrm{Fil}^{a} \mathfrak{g}_{0} \subseteq \mathfrak{g}_{0}.$$
    \item $\Lambda_{r}$ is a rational subspace in $\mathfrak{g}=\mathfrak{g}_{0}\otimes_{\mathbb{R}}\mathbb{C}$ stable under the natural $\mathrm{Gal}(\mathbb{C}/\mathbb{R})$-action on $\mathfrak{g}$
\end{itemize}
such that
\begin{itemize}
    \item $\mathrm{Fil}^{v} \mathfrak{g} \cap \Lambda_{r} = 0;$
    \item $(\mathrm{Fil}^{a} \mathfrak{g} \cap \Lambda_{r})\otimes_{\mathbb{Q}}\mathbb{C}= \mathrm{Fil}^{a} \mathfrak{g}/ \mathrm{Fil}^{v} \mathfrak{g};$
    \item $(\Lambda_{r}/(\mathrm{Fil}^{a} \mathfrak{g} \cap \Lambda_{r}))\otimes_{\mathbb{Q}} \mathbb{R} = \mathfrak{g} /\mathrm{Fil}^{a} \mathfrak{g} $
    \item there exists a positive definite symmetric form $S$ on $\mathfrak{g}_{0} /\mathrm{Fil}^{a} \mathfrak{g}_{0}$ satisfying $E(x_{1}+y_{1}\sqrt{-1},x_{2}+y_{2}\sqrt{-1}):= S(y_{1},x_{2})-S(x_{1},y_{2})$ takes rational values on $\Lambda_{r}/(\mathrm{Fil}^{a} \mathfrak{g} \cap \Lambda_{r})$.
\end{itemize}
A morphism from $(\mathfrak{g}_{0,1},\Lambda_{r,1})$ to $(\mathfrak{g}_{0,2},\Lambda_{r,2})$
is a real linear map  $\varphi$ from $\mathfrak{g}_{0,1}$ to $\mathfrak{g}_{0,2}$ such that
\begin{itemize}
    \item $\varphi_{\mathbb{C}}(\Lambda_{r,1}) \subseteq \Lambda_{r,2}$;
    \item $\varphi(\mathrm{Fil}^{v}\mathfrak{g}_{0,1}) \subseteq \mathrm{Fil}^{v}\mathfrak{g}_{0,2}$;
    \item $\varphi(\mathrm{Fil}^{a}\mathfrak{g}_{0,1}) \subseteq \mathrm{Fil}^{a}\mathfrak{g}_{0,2}$.
\end{itemize}
This is also an additive category.

For any simply connected commutative locally Nash group $G$, by Proposition \ref{surj} there exists a real connected commutative algebraic group $\mathsf{G}$ such that $G$ is locally Nash equivalent to the universal covering of $\mathsf{G}(\mathbb{R})^{0}$. Let $(\mathfrak{g}_{0}, \Lambda)$ be the filtered polarizable lattice associated with $\mathsf{G}$ and let
$\Lambda_{r}$ be $\Lambda\otimes_{\mathbb{Z}}\mathbb{Q}$. Then it is easy to check that $(\mathfrak{g}_{0}, \Lambda_{r})$ is a filtered polarizable rational space. By Proposition \ref{inj}, it is independent of the choice of $\mathsf{G}$. Furthermore, a locally Nash map between two simply connected commutative locally Nash groups induce a morphism in the category
$\mathcal{C}''$ by Proposition \ref{inj} and Theorem \ref{equivalence2}, while a morphism between two filtered polarizable lattices naturally induces a morphism between two associated filtered polarizable rational spaces. Thus this construction gives  a functor from the category of simply connected commutative locally Nash groups to the category of filtered polarizable rational spaces. We show that this functor actually establishes an equivalence between these two categories.

\begin{thmp}\label{simply}
The category of simply connected commutative locally Nash groups is equivalent to the category of filtered polarizable rational spaces.
\end{thmp}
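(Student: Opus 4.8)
The plan is to show that the functor $F$ constructed above from simply connected commutative locally Nash groups to filtered polarizable rational spaces is essentially surjective, faithful, and full. The observation underpinning everything is that $F$ records the derivative at the identity: unwinding the definition of $F$ on morphisms, a locally Nash homomorphism $\varphi\colon G_1 \to G_2$ is sent, via the auxiliary group $\mathsf{G}_3$ of Proposition \ref{inj} (with its isogeny to $\mathsf{G}_1$ and algebraic map to $\mathsf{G}_2$) and Theorem \ref{equivalence2}, to the real-linear map underlying $d\varphi\colon \mathfrak{g}_{0,1}\to\mathfrak{g}_{0,2}$. Faithfulness is then immediate: since $G_1$ is connected and simply connected and all groups are commutative Lie groups, a homomorphism is determined by its derivative, so two morphisms with the same image under $F$ coincide.

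For essential surjectivity I would, given a filtered polarizable rational space $(\mathfrak{g}_0,\Lambda_r)$, produce a filtered polarizable lattice $(\mathfrak{g}_0,\Lambda)$ with $\Lambda\otimes_{\mathbb{Z}}\mathbb{Q}=\Lambda_r$. Choose a $\mathbb{Q}$-basis of $\mathrm{Fil}^a\mathfrak{g}\cap\Lambda_r$, extend it to a $\mathbb{Q}$-basis of $\Lambda_r$, let $\Lambda_0$ be its $\mathbb{Z}$-span, and set $\Lambda=\Lambda_0+\sigma\Lambda_0$ to enforce $\mathrm{Gal}(\mathbb{C}/\mathbb{R})$-stability. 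The filtration conditions defining a filtered polarizable lattice follow from the corresponding rational conditions by tensoring with $\mathbb{C}$ and $\mathbb{R}$, while the form $E$, which takes rational values on $\Lambda_r/(\mathrm{Fil}^a\mathfrak{g}\cap\Lambda_r)$, takes integer values on $\Lambda$ after rescaling $S$ by a common denominator. By Theorem \ref{equivalence2}, $(\mathfrak{g}_0,\Lambda)$ corresponds to a real connected commutative algebraic group $\mathsf{G}$, and by Proposition \ref{surj} the universal covering of $\mathsf{G}(\mathbb{R})^0$ is a simply connected commutative locally Nash group whose image under $F$ is exactly $(\mathfrak{g}_0,\Lambda_r)$.

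For fullness, let $\psi\colon(\mathfrak{g}_{0,1},\Lambda_{r,1})\to(\mathfrak{g}_{0,2},\Lambda_{r,2})$ be a morphism of rational spaces. Realize the source by a filtered polarizable lattice $\Lambda_1\subseteq\Lambda_{r,1}$ as above, realize the target by some $\Lambda_2^{(0)}\subseteq\Lambda_{r,2}$, and set $\Lambda_2=\Lambda_2^{(0)}+\psi_{\mathbb{C}}(\Lambda_1)$. This $\Lambda_2$ is again a filtered polarizable lattice: it is Galois-stable because $\psi_{\mathbb{C}}$ is Galois-equivariant, it is finitely generated and spans $\Lambda_{r,2}$, and it still satisfies the filtration and polarization conditions (being squeezed between $\Lambda_2^{(0)}$ and $\Lambda_{r,2}$); crucially it now satisfies $\psi_{\mathbb{C}}(\Lambda_1)\subseteq\Lambda_2$. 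Since $\psi$ respects the filtration, it is a morphism of filtered polarizable lattices $(\mathfrak{g}_{0,1},\Lambda_1)\to(\mathfrak{g}_{0,2},\Lambda_2)$, hence by Theorem \ref{equivalence2} arises from an algebraic group homomorphism $\mathsf{G}_1\to\mathsf{G}_2$. Taking real points gives a Nash homomorphism $\mathsf{G}_1(\mathbb{R})^0\to\mathsf{G}_2(\mathbb{R})^0$, which lifts to a locally Nash homomorphism $\varphi\colon G_1\to G_2$; since universal coverings depend only on isogeny classes (Proposition \ref{inj}), the groups $G_1,G_2$ agree with the given ones, and $d\varphi=\psi$, so $F(\varphi)=\psi$.

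The step requiring the most care, and the genuine content of the argument, is fullness: one must upgrade an abstract real-linear map respecting only the rational structure to an honestly locally Nash homomorphism. This is precisely what the lattice adjustment $\Lambda_2=\Lambda_2^{(0)}+\psi_{\mathbb{C}}(\Lambda_1)$ accomplishes, converting the rational containment $\psi_{\mathbb{C}}(\Lambda_{r,1})\subseteq\Lambda_{r,2}$ into an integral one so that Theorem \ref{equivalence2} supplies an algebraic---and therefore Nash---homomorphism integrating $\psi$. Essential surjectivity and faithfulness, by contrast, reduce to routine linear algebra over $\mathbb{Q}$ and the uniqueness of integrations of Lie algebra maps on simply connected groups.
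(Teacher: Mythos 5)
Your proposal is correct and follows essentially the same route as the paper: reduce to the lattice-level equivalence (Theorem \ref{equivalence2}) by choosing $\mathrm{Gal}(\mathbb{C}/\mathbb{R})$-stable full lattices inside the rational spaces, with faithfulness immediate and fullness obtained by converting the rational containment into an integral one. The only (cosmetic) difference is in the fullness step: the paper scales the morphism by an integer $N$ so that $N\varphi(\Lambda_1)\subseteq\Lambda_2$ and then divides the induced locally Nash map by $N$, whereas you enlarge the target lattice to $\Lambda_2^{(0)}+\psi_{\mathbb{C}}(\Lambda_1)$; both devices accomplish the same thing.
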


\begin{proof}
We consider the functor constructed as above. For any filtered polarizable rational space $(\mathfrak{g}_{0},\Lambda_{r})$, choose a $\mathrm{Gal}(\mathbb{C}/\mathbb{R})$-stable full lattice $\Lambda$ in $\Lambda_{r}$. Then $(\mathfrak{g}_{0},\Lambda)$ is a filtered polarizable lattice. Let $\mathsf{G}$ be a real connected commutative algebraic group associated with
$(\mathfrak{g}_{0},\Lambda)$. Then $(\mathfrak{g}_{0},\Lambda_{r})$ is associated with the universal covering of  $\mathsf{G}(\mathbb{R})^{0}$, which is a simply connected commutative locally Nash group. So the functor is essentially full.

It is trivial that the functor is faithful. Let $G_{1}, G_{2}$ be two simply connected commutative locally Nash groups and
$(\mathfrak{g}_{0,1},\Lambda_{r,1})$ and $(\mathfrak{g}_{0,2},\Lambda_{r,2})$ be the correspoding filtered polarizable rational spaces, respectively. Choose $\mathrm{Gal}(\mathbb{C}/\mathbb{R})$-stable full lattices $\Lambda_{1},\Lambda_{2}$ in
$\Lambda_{r,1}$ and $\Lambda_{r,2}$, respectively. Let $\mathsf{G}_{1}, \mathsf{G}_{2}$ be real connected commutative algebraic groups corresponding to $(\mathfrak{g}_{0,1},\Lambda_{1})$ and $(\mathfrak{g}_{0,2},\Lambda_{2})$, respectively. Then we have the covering maps from $G_{1}, G_{2}$ to $\mathsf{G}_{1}(\mathbb{R})^{0},\mathsf{G}_{2}(\mathbb{R})^{0}$, respectively. For any morphism $\varphi$ from $(\mathfrak{g}_{0,1},\Lambda_{r,1})$ to $(\mathfrak{g}_{0,2},\Lambda_{r,2})$, choose an integer $N$ large enough so that $N\varphi(\Lambda_{1})\subseteq \Lambda_{2}$. Then $N\varphi$ corresponds to a homomorphism $\varphi'$ from $\mathsf{G}_{1}$ to $\mathsf{G}_{2}$. The map $\varphi'$ induces a locally Nash map from $G_{1}$ to $G_{2}$ denoted also by $\varphi'$. The locally Nash map $\frac{\varphi'}{N}$ then corresponds to $\varphi$. So the functor is fully faithful and we finish the proof of this theorem.
\end{proof}

For the convenience of computation, we introduce the notions of split dimensions. For any marked polarizable lattice $(V_{a},\Lambda_{a}, V_{v}, \Lambda_{t}, \phi_{v}, \phi_{t})$ and the associated algebraic group $\mathsf{G}$, we define the real vector space $U_{v}$ to be the image of $\phi_{v}$ and define $U_{t}$ to be the rational vector space spanned by the preimage of $\mathrm{im} \phi_{t}$ in $(V_{a}\otimes_{\mathbb{R}}\mathbb{C})^{\check{}}$. By the definition of marked polarizable lattices, the space $U_{t}$ is stable under $\mathrm{Gal}(\mathbb{C}/\mathbb{R})$-action and we denote $U_{t,+}$ (resp. $U_{t,-}$) the eigenspace of $\sigma$ with eigenvalue $1$ (resp. $-1$), where $\sigma$ is the conjugate map in $\mathrm{Gal}(\mathbb{C}/\mathbb{R})$. We define the split dimensions by the following formulas:
$$d_{v} = \dim_{\mathbb{R}} (\ker \phi_{v}),\quad d_{t,+} = \mathrm{rank}(\ker \phi_{t})^{\sigma=1}, \quad d_{t,-}=\mathrm{rank}(\ker \phi_{t})^{\sigma=-1}.$$
We call $d_{v}$ (resp. $d_{t,+}$, $ d_{t,-}$) the split additive dimension (resp. the split multiplicative dimension, the split twisted multiplicative dimension). By the above classification, Corollary \ref{antparametrization} and \cite[Theorem 3.4]{Br}, the number $d_{v}$ (resp. $d_{t,+}$, $ d_{t,-}$) denotes the dimension of a maximal direct summand (which is not necessarily unique but of the same dimension), which is a direct sum of additive groups (resp. multiplicative groups, twisted multiplicative groups), in the universal locally Nash covering group of $\mathsf{G}(\mathbb{R})^{0}$. Thus we could define these three split dimensions for any connected commutative locally Nash group, which is determined by its universal covering.

\section{Classification of connected commutative locally Nash groups}\label{cclng}

In this section, we will give a complete classification of both connected commutative locally Nash groups and connected commutative Nash groups. We introduce a new category $\mathcal{D}$ as follows. An object in $\mathcal{D}$ is a triple
$(\mathfrak{g}_{0}, \Lambda_{r}, \Gamma)$, where $(\mathfrak{g}_{0},\Lambda_{r})$ is a filtered polarizable rational space as defined in Section \ref{scclng} and $\Gamma$ is a discrete lattice in $\mathfrak{g}_{0}$.  A morphism from $(\mathfrak{g}_{0,1},\Lambda_{r,1},\Gamma_{1})$ to $(\mathfrak{g}_{0,1},\Lambda_{r,2},\Gamma_{2})$ in the category $\mathcal{D}$ is a morphism $\varphi$ from $(\mathfrak{g}_{0,1},\Lambda_{r,1})$ to $(\mathfrak{g}_{0,2},\Lambda_{r,2})$ in the category of filtered polarizable rational spaces such that $\varphi(\Gamma_{1})\subseteq \Gamma_{2}$.

For any connected commutative locally Nash group $G$, consider its universal covering $\tilde{G}$. The latter is a simply connected commutative locally Nash group. Let $(\mathfrak{g}_{0},\Lambda_{r})$ be the filtered polarizable rational space associated with $\tilde{G}$. Identify $\tilde{G}$ with $\mathfrak{g}_{0}$ as a Lie group. Then the kernel of the covering map from $\tilde{G}$ to $G$ is a discrete lattice in $\mathfrak{g}_{0}$, denoted by $\Gamma$. In this way, we get a functor from the category of connected commutative locally Nash groups to the category $\mathcal{D}$.

\begin{thm}
The category of connected commutative locally Nash groups is equivalent to the category $\mathcal{D}$ as defined above.
\end{thm}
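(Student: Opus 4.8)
The plan is to show that the functor $G \mapsto (\mathfrak{g}_{0},\Lambda_{r},\Gamma)$ constructed above is an equivalence by verifying that it is essentially surjective, full, and faithful, in each case reducing the assertion to the analogous property of the functor on simply connected groups established in Theorem \ref{simply} by passing to universal covers. The whole argument rests on the principle that a connected commutative locally Nash group is the quotient of its universal cover by a discrete subgroup acting by translations, together with the fact that translations in a locally Nash group are locally Nash automorphisms.

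First I would treat essential surjectivity. Given a triple $(\mathfrak{g}_{0},\Lambda_{r},\Gamma)$ in $\mathcal{D}$, Theorem \ref{simply} produces a simply connected commutative locally Nash group $\tilde{G}$ whose associated filtered polarizable rational space is $(\mathfrak{g}_{0},\Lambda_{r})$; I identify $\tilde{G}$ with $\mathfrak{g}_{0}$ as a Lie group. Since $\Gamma$ is a discrete subgroup of $\mathfrak{g}_{0}$, it acts freely and properly discontinuously on $\tilde{G}$ by translations, so I form the quotient $G := \tilde{G}/\Gamma$, a connected commutative Lie group with covering map $p \colon \tilde{G} \to G$. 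The point is to equip $G$ with a locally Nash structure: because the group operations on $\tilde{G}$ are locally Nash, each translation $L_{\gamma}$ by $\gamma \in \Gamma$ is a locally Nash automorphism of $\tilde{G}$, so pushing the Nash charts of $\tilde{G}$ forward along the local diffeomorphism $p$ yields an atlas on $G$ whose transition maps are locally Nash. With this structure $p$ is a locally Nash covering, $G$ is a connected commutative locally Nash group, and since $\tilde{G}$ is simply connected it is the universal cover of $G$ with deck group $\Gamma$; hence the functor sends $G$ to the given triple.

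Next I would establish faithfulness and fullness together via lifting and descent along covering maps. For faithfulness, any locally Nash homomorphism $f \colon G_{1} \to G_{2}$, composed with the covering $p_{1} \colon \tilde{G}_{1} \to G_{1}$, lifts uniquely to a based homomorphism $\tilde{f} \colon \tilde{G}_{1} \to \tilde{G}_{2}$ of universal covers, and the morphism of triples induced by $f$ is exactly the morphism of filtered polarizable rational spaces induced by $\tilde{f}$; if two homomorphisms induce the same morphism, then their lifts coincide by the faithfulness part of Theorem \ref{simply}, and since $p_{1}$ is surjective this forces the two homomorphisms to agree. For fullness, a morphism $\varphi$ in $\mathcal{D}$ is in particular a morphism of filtered polarizable rational spaces, hence by Theorem \ref{simply} it is realised by a locally Nash homomorphism $\tilde{f} \colon \tilde{G}_{1} \to \tilde{G}_{2}$; the extra condition $\varphi(\Gamma_{1}) \subseteq \Gamma_{2}$ guarantees $\tilde{f}(\Gamma_{1}) \subseteq \Gamma_{2}$, so $\tilde{f}$ descends to a homomorphism $f \colon G_{1} \to G_{2}$, which is locally Nash because $\tilde{f}$ is and the coverings are local locally Nash isomorphisms, and which induces $\varphi$.

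The hard part will be the descent of the locally Nash structure in the essential-surjectivity step: one must check carefully that the atlas pushed down along $p$ is genuinely locally Nash and independent of choices, that is, that a discrete subgroup acting by locally Nash translations yields a locally Nash quotient with $p$ a covering of locally Nash groups, and that no further constraint on $\Gamma$ (such as being a full lattice) is required for $G$ to exist as a locally Nash group. The remaining verifications, namely that translations are locally Nash, that the based lift of a group homomorphism is again a homomorphism, and that the whole assignment is functorial and compatible with composition, are routine consequences of the definitions together with Theorem \ref{simply}.
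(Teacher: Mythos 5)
Your proposal is correct and takes essentially the same approach as the paper: the paper's proof is a two-sentence reduction to Theorem \ref{simply}, resting on the observation that a continuous homomorphism of connected commutative locally Nash groups is locally Nash if and only if the induced map between universal coverings is locally Nash, and your essential-surjectivity, fullness, and faithfulness arguments are precisely the details that reduction leaves implicit. The descent step you flag as potentially hard is indeed routine, since translations by $\Gamma$ are locally Nash automorphisms of $\tilde{G}$, so the quotient atlas is locally Nash and no condition on $\Gamma$ beyond discreteness is needed (fullness of $\Gamma$ only enters later, in the criterion for $G$ to be Nash rather than merely locally Nash).
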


\begin{proof}
A continuous homomorphism between two connected commutative locally Nash groups is a locally Nash map if and only if it induces a locally Nash map between their universal coverings. The theorem then reduces to Theorem \ref{simply}.
\end{proof}

Now we study the conditions under which a connected commutative locally Nash group associated with the triple $(\mathfrak{g}_{0},\Lambda_{r},\Gamma)$ becomes a Nash group. To state the results, we introduce a subspace $\mathfrak{g}_{0}^{\mathrm{enca}}$ in $\mathfrak{g}_{0}$ called the essentially non-compact affine subspace as
 $$\mathfrak{g}_{0}^{\mathrm{enca}}:=\mathrm{Fil}^{v}\mathfrak{g}_{0}\oplus(\sqrt{-1}(\Lambda_{r}^{\sigma=-1}\otimes_{\mathbb{Q}}\mathbb{R})\cap\mathrm{Fil}^{a}\mathfrak{g}_{0}).$$
 Recall that $\sigma$ is the conjugate map in $\mathrm{Gal}(\mathbb{C}/\mathbb{R})$. First we give a sufficient condition.

\begin{prpt}\label{sufficientforNash}
Let $G$ be the connected commutative locally Nash group associated with the triple $(\mathfrak{g}_{0},\Lambda_{r},\Gamma)$. If the lattice $\Gamma$ spans the vector space $\mathfrak{g}_{0}/\mathfrak{g}_{0}^{\mathrm{enca}}$, then $G$ is a Nash group.
\end{prpt}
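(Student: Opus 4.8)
The plan is to exploit the structure of the universal covering $\widetilde G = \mathfrak{g}_0$ of $\mathsf{G}(\mathbb{R})^0$ (for a real connected commutative algebraic group $\mathsf{G}$ attached to $(\mathfrak{g}_0,\Lambda_r)$ as in Theorem \ref{simply}) and to assemble a \emph{finite} Nash atlas on $G = \widetilde G/\Gamma$ in two stages. First I would record the splitting of the identity component, as a Nash group, $\mathsf{G}(\mathbb{R})^0 \cong E \times K$, where $E \cong \mathbb{R}^{a}\times(\mathbb{R}^{>0})^{b}$ is the maximal non-compact simply connected (hence affine Nash) subgroup and $K$ is the maximal compact torus. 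Computing the action of $\sigma$ on the fundamental group $\Lambda$ of $\mathsf{G}_{\mathbb{C}}$ (with $\Lambda_r = \Lambda\otimes_{\mathbb{Z}}\mathbb{Q}$) shows that a split factor $\mathbb{G}_m$ contributes to $\Lambda^{\sigma=-1}$ while a twisted factor contributes to $\Lambda^{\sigma=+1}$; hence $\mathrm{Lie}(E) = \mathrm{Fil}^{v}\mathfrak{g}_0 \oplus(\sqrt{-1}(\Lambda_r^{\sigma=-1}\otimes_{\mathbb{Q}}\mathbb{R})\cap \mathrm{Fil}^{a}\mathfrak{g}_0) = \mathfrak{g}_0^{\mathrm{enca}}$, the vector groups together with the split tori, and $\mathrm{Lie}(K)$ maps isomorphically onto $Q := \mathfrak{g}_0/\mathfrak{g}_0^{\mathrm{enca}}$. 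Correspondingly $\widetilde G = E \times \widetilde K$, where $E$ is affine Nash but $\widetilde K = Q$ carries only the non-affine locally Nash structure of the universal covering of the compact torus $K$. In this language the hypothesis says exactly that $p(\Gamma)$ spans $Q = \mathrm{Lie}(\widetilde K)$, where $p\colon \mathfrak{g}_0 \to Q$ is the projection.

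The second stage is a reduction by a sublattice. Using the hypothesis, choose $\gamma_1,\dots,\gamma_n \in \Gamma$ whose images $p(\gamma_i)$ form an $\mathbb{R}$-basis of $Q$, put $\Gamma_1 = \bigoplus_i \mathbb{Z}\gamma_i$ and let $\Lambda_K = p(\Gamma_1)$, a full lattice in $Q$. The key observation is that $H := \widetilde G/\Gamma_1$ is already Nash. Indeed $\Gamma_1$ is the graph of the homomorphism $\Lambda_K \to E$ sending $p(\gamma_i)\mapsto \gamma_{i,E}$ (the $E$-component of $\gamma_i$), so $H$ is the flat $E$-bundle over the compact torus $Q/\Lambda_K$ whose holonomy is given by these translations. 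Since the covering $\widetilde K \to Q/\Lambda_K$ is, by construction of the quotient structure, a local Nash isomorphism, a finite Nash atlas of the compact base pulls back to a finite atlas of $H$ with charts $E \times W_\alpha$; the transition maps differ by a base transition (Nash on $Q/\Lambda_K$) and a fibre translation by some $\gamma_{i,E}\in E$, which is a global Nash automorphism of $E \cong \mathbb{R}^a\times(\mathbb{R}^{>0})^b$ (addition of a constant, respectively multiplication by a constant). Hence $H$ is Nash.

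Finally $G = H/D$ with $D := \Gamma/\Gamma_1$ a discrete subgroup of the connected commutative Nash group $H$, acting by translations, so the proposition reduces to the lemma that the quotient of a connected commutative Nash group by a discrete subgroup is again Nash; this is where I expect the main difficulty to lie. I would prove it by the same mechanism: $D$ acts by translations, which are \emph{global} Nash automorphisms of $H$, so a semialgebraic fundamental domain — compact in the directions that $D$ together with the compact factor of $H$ closes up, and a single affine chart in the remaining $\mathbb{R}^{>0}$- and $\mathbb{R}$-directions coming from $\mathfrak{g}_0^{\mathrm{enca}}$ — should yield a finite atlas whose transition maps are powers of these Nash translations. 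The crux is precisely the finiteness of this atlas when $p(\Gamma)$ fails to be discrete, i.e. when $D$ winds simultaneously through the compact factor $Q/\Lambda_K$ and through $E$; here it is essential that translations on a Nash group are global Nash maps, not merely locally Nash, as already the model case $(\mathbb{R}^{>0})^2/\langle(2,3)\rangle$ illustrates, where the deck transformation $(x,y)\mapsto(2x,3y)$ is Nash and produces a finite atlas even though no non-constant Nash function is $\Gamma$-invariant.
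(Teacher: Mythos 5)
Your proposal breaks at its very first stage: the claimed splitting $\mathsf{G}(\mathbb{R})^{0}\cong E\times K$ \emph{as Nash groups} (equivalently, $\widetilde{G}\cong E\times\widetilde{K}$ as locally Nash groups, i.e.\ a direct-sum decomposition of the filtered polarizable rational space) is false in general. Your Lie-algebra computation is correct --- $\mathfrak{g}_{0}^{\mathrm{enca}}$ is indeed the Lie algebra of $\mathsf{V}\times\mathsf{T}_{1}$, the vector part together with the split tori, and this is exactly how the paper interprets $\mathfrak{g}_{0}^{\mathrm{enca}}$ --- but that only splits the underlying Lie group. The obstruction to a Nash splitting is precisely the class of the extension
\[
0\to\mathsf{V}\times\mathsf{T}_{1}\to\mathsf{G}\to\mathsf{T}_{2}\times\mathsf{A}\to 0,
\]
and non-split classes exist: take $\mathsf{G}$ a non-trivial extension of a real elliptic curve $\mathsf{A}$ by $\mathsf{G}_{a}$ (these exist since $\Ext(\mathsf{A},\mathsf{G}_{a})\cong\oH^{1}(\mathsf{A},\mathcal{O}_{\mathsf{A}})\neq 0$). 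If the maximal compact subgroup $K\subset\mathsf{G}(\mathbb{R})^{0}$ were semialgebraic, its Zariski closure $\mathsf{K}$ would be a one-dimensional algebraic subgroup meeting $\mathsf{G}_{a}$ finitely, hence isogenous onto $\mathsf{A}$; the inclusion $\mathsf{K}\hookrightarrow\mathsf{G}$ would then split the pullback of the extension along this isogeny, and since $\Ext(\mathsf{A},\mathsf{G}_{a})$ is a $\mathbb{Q}$-vector space (so isogeny pullback is injective on classes), the original extension would split --- a contradiction. So $K$ is not a Nash subgroup and no Nash splitting exists; these are exactly the groups listed in Section \ref{lowdimension} as \emph{non-split} extensions of locally Nash groups. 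Since your stage two (``$H$ is the \emph{flat} $E$-bundle whose transition maps are base transitions composed with constant translations $\gamma_{i,E}$'') and your stage-three fundamental-domain argument both work in the product coordinates $E\times\widetilde{K}$, the gap propagates through the entire proof.

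The repair is to use only \emph{local} triviality, which is what the paper's proof does: $\mathsf{V}\times\mathsf{T}_{1}$ is special in Serre's sense, so $\mathsf{G}$, viewed as a torsor under it over $\mathsf{T}_{2}\times\mathsf{A}$, is Zariski-locally trivial; one takes an affine open $\mathsf{U}\ni 0$ in the base whose preimage is $\mathsf{V}\times\mathsf{T}_{1}\times\mathsf{U}$ \emph{as a real variety} (not as groups), pulls it back to $\widetilde{G}$, shrinks the resulting affine Nash open set into a fundamental box $W$ for the lattice, and covers $G$ by finitely many of its translates, using compactness of $\mathfrak{g}_{0}'/\Gamma'$ where $\mathfrak{g}_{0}'=\mathfrak{g}_{0}/\mathfrak{g}_{0}^{\mathrm{enca}}$; no global section of the bundle is ever needed. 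Your remaining architecture actually parallels the paper's: the paper also passes to a sublattice mapping isomorphically onto a full lattice of $\mathfrak{g}_{0}'$, but it performs that reduction \emph{first}, by quoting as known the fact that a Nash group modulo a discrete normal subgroup is Nash --- the very lemma you single out as the main difficulty and then attempt to prove using the same unavailable product coordinates. With the reduction done first and the torsor argument in place of the splitting, your outline becomes the paper's proof.
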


\begin{proof}
Since a Nash group modulo a discrete normal subgroup is also Nash, without loss of generality we may assume that the lattice $\Gamma$ is canonically isomorphic to a full discrete lattice $\Gamma'$ in $\mathfrak{g}_{0}'=\mathfrak{g}_{0}/\mathfrak{g}_{0}^{\mathrm{enca}}$.
Let $\lambda_{1}, \cdots \lambda_{m}$ be a set of generators of $\Gamma'$ and write
    $$W=\left\{\sum_{i=1}^{m}a_{i}\lambda_{i}\Big|-\frac{1}{2}< a_{i}<\frac{1}{2}, 1\leq i \leq m\right\}.$$

Let $\mathsf{G}$ be a real connected commutative algebraic group such that $\mathsf{G}(\mathbb{R})^{0}$ and $G$ have the same universal covering. We can assume that
    $$\mathsf{G}_{\mathrm{aff}}=\mathsf{V}\times \mathsf{T}_{1}\times \mathsf{T}_{2},$$
where $\mathsf{T}_{1}=\mathsf{G}_{m,\mathbb{R}}^{n_{1}}$ and $\mathsf{T}_{2}=\mathrm{SO}(2,\mathbb{R})^{n_{2}}$. Then
$\mathsf{G}$ fits the following exact sequence
    $$0 \to \mathsf{V}\times \mathsf{T}_{1} \to \mathsf{G} \to \mathsf{T}_{2}\times \mathsf{A}\to 0.$$
As a $\mathsf{V}\times \mathsf{T}_{1}$ torsor over $\mathsf{T}_{2}\times \mathsf{A}$, there exists an affine neighborhood $\mathsf{U}$ around $0$ in $\mathsf{T}_{2}\times \mathsf{A}$ such that the preimage of $\mathsf{U}$ in $\mathsf{G}$ is the product (not as groups but as real algebraic varieties) $\mathsf{V} \times \mathsf{T}_{1}\times \mathsf{U}$.

Passing to real points, we identify $\mathfrak{g}_{0}^{\mathrm{enca}}$ with the real points of $\mathsf{V}\times \mathsf{T}_{1}$, and identify $\mathfrak{g}_{0}'$ with the universal locally Nash covering of the identity component of $\mathsf{T}_{2}\times \mathsf{A}$ and the identity component of the preimage of $\mathsf{U}(\mathbb{R})^{0}$ in $\mathfrak{g}_{0}'$ is an affine Nash open neighborhood $U'$ around $0$. We can shrink $U'$ further to make it contained in $W$. Denote $\tilde{U}$ the preimage of $U'$ in $\tilde{G}$. By our assumption $\tilde{U}$ is canonically isomorphic to an affine Nash submanifold of $G$ and it is a $\mathfrak{g}_{0}^{\mathrm{enca}}$-torsor over $U'$. Hence the locally Nash group $G$ is a $\mathfrak{g}_{0}^{\mathrm{enca}}$-torsor over $\mathfrak{g}_{0}'/\Gamma'$ and $\mathfrak{g}_{0}'/\Gamma'$ is compact. Then by translation we get a finite affine Nash covering from $\tilde{U}$.
\end{proof}

We still need to show the condition is also necessary. Write
$\Gamma_{r}=\Gamma\cap \Lambda_{r}$ and $G_{r}=\tilde{G}/\Gamma_{r}$. Then $G_{r}$ is a locally Nash covering of $G$. For a given affine open Nash submanifold $U$ of $G$, we define $U_{r}$ to be the preimage of $U$ in $G_{r}$. We have the following lemma.

\begin{lemt}\label{lemmaforNash1}
Let $U_{r}^{0}$ be a connected component of $U_{r}$. Then the open locally Nash submanifold $U_{r}^{0}$ is canonically isomorphic to $U$.
\end{lemt}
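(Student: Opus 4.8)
The plan is to show that the covering map $U_r^0 \to U$ is a bijective local isomorphism, hence a locally Nash isomorphism. The key point is to analyze the covering $G_r \to G$, whose deck group is $\Gamma/\Gamma_r$. Since $\Gamma_r = \Gamma \cap \Lambda_r$, the quotient $\Gamma/\Gamma_r$ injects into $\mathfrak{g}_0/(\Lambda_r \cap \mathfrak{g}_0)$, and the central structural fact to extract is that the nontrivial deck transformations move points ``transversally'' to the affine chart $U$ in a way that prevents two distinct points of a single connected component $U_r^0$ from lying over the same point of $U$.

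First I would set up the covering $\pi\colon G_r \to G$ explicitly. Both groups are quotients of $\tilde G = \mathfrak{g}_0$, namely $G = \mathfrak{g}_0/\Gamma$ and $G_r = \mathfrak{g}_0/\Gamma_r$, so $\pi$ is the natural projection with deck group $\Gamma/\Gamma_r$. The map $U_r^0 \to U$ is the restriction of $\pi$ to one connected component of the preimage $U_r = \pi^{-1}(U)$; restricted this way it is automatically a local locally Nash isomorphism (a covering is a local diffeomorphism and the locally Nash structures are defined compatibly via $\tilde G$), and it is surjective onto $U$ since $U$ is connected and $U_r^0$ is a connected component of the full preimage. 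So the entire content reduces to \emph{injectivity} of $U_r^0 \to U$.

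For injectivity I would argue as follows. Suppose $x, y \in U_r^0$ satisfy $\pi(x) = \pi(y)$; then $y = x + \gamma$ for some $\gamma \in \Gamma/\Gamma_r$, and I must show $\gamma = 0$. The idea is that $U$, being an affine open Nash submanifold of $G$, is ``small'' relative to the directions in which $\Gamma$ genuinely moves points of $G_r$: the affine chart is essentially a bounded fundamental-domain-like piece, so I would show that a single connected component $U_r^0$ of the preimage cannot wrap around to meet a nontrivial $\Gamma/\Gamma_r$-translate of itself. Concretely, since $x$ and $y$ lie in the same connected component $U_r^0$, I can join them by a path in $U_r^0$; projecting to $U \subseteq G$ gives a loop based at $\pi(x)$, and lifting appropriately to $\tilde G = \mathfrak{g}_0$ shows that the displacement $\gamma$ is realized by a path staying inside a single lift of $U$. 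Because $U$ is affine (hence in particular its lifts to $\mathfrak{g}_0$ are the semi-algebraic convex-type neighborhoods controlled in the proof of Proposition \ref{sufficientforNash}), this forces the lattice element representing $\gamma$ to lie in $\Lambda_r$, i.e.\ $\gamma \in (\Gamma \cap \Lambda_r)/\Gamma_r = 0$. Hence $x = y$.

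The main obstacle I anticipate is making precise the claim that ``an affine open piece cannot meet its own nontrivial $\Gamma/\Gamma_r$-translate within one component.'' The subtlety is that $\Gamma/\Gamma_r$ may be infinite and the component $U_r^0$ need not be relatively compact, so a naive bounded-fundamental-domain argument does not immediately apply; the deck transformations in $\Gamma_r = \Gamma \cap \Lambda_r$ have been quotiented away precisely because they act ``algebraically/compactly'' (they come from the rational structure $\Lambda_r$ and are harmless for the affine chart), whereas the remaining elements of $\Gamma/\Gamma_r$ act in genuinely non-algebraic directions. I would therefore lean on the affineness of $U$ together with the structural decomposition established in Proposition \ref{sufficientforNash} (writing $\mathsf{G}_{\mathrm{aff}} = \mathsf V \times \mathsf T_1 \times \mathsf T_2$ and using that the affine chart is a $\mathsf V \times \mathsf T_1$-torsor over an affine neighborhood in $\mathsf T_2 \times \mathsf A$) to pin down exactly which lattice directions can be ``seen'' inside an affine open set, and conclude that only the $\Lambda_r$-directions can identify two points of $U_r^0$.
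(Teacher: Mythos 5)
Your reduction of the lemma to injectivity of $U_r^0 \to U$ is correct and matches the paper's starting point: since $U_r \to U$ is the pullback of the covering $G_r \to G$, the restriction to a connected component is a surjective covering map, and injectivity is equivalent to triviality of the stabilizer $\{\gamma \in \Gamma/\Gamma_r \mid \gamma + U_r^0 = U_r^0\}$. But the heart of your argument --- the claim that a path inside one component of the preimage of $U$ in $\mathfrak{g}_0$ can only realize displacements lying in $\Lambda_r$ ``because $U$ is affine'' --- is not a proof; it is a restatement of the lemma itself. The justification you offer is unfounded: lifts of an arbitrary affine open Nash submanifold $U$ to $\mathfrak{g}_0$ are not ``semi-algebraic convex-type neighborhoods,'' and they are not the sets controlled in the proof of Proposition \ref{sufficientforNash} (that proposition constructs one particular chart for the sufficiency direction; it says nothing about a general affine $U$, which can be unbounded --- $\mathbb{R}$ itself is affine --- and whose lifts under the merely locally Nash covering $\mathfrak{g}_0 \to G$ need not be semi-algebraic at all). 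You acknowledge this obstacle yourself, but the fallback you propose (leaning on the torsor decomposition from Proposition \ref{sufficientforNash}) does not supply the missing mechanism either, for the same reason.

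What is actually needed, and what the paper's proof supplies, is an input converting Nash-ness of $U$ into a finiteness statement: Nash coordinate functions on the connected set $U_r^0$ are algebraic over the rational function field $K(\mathsf{G})$ of an algebraic group $\mathsf{G}$ of which $G_r$ is a covering. Since these coordinates are pulled back from $U$, they are invariant under the stabilizer $\Gamma_{U_r}$, hence so are the coefficients of their minimal polynomials over $K(\mathsf{G})$; being rational functions, these coefficients are then invariant under the Zariski closure $\mathsf{G}_{U_r}$ of $\Gamma_{U_r}$ in $\mathsf{G}$. Along an orbit of $\mathsf{G}_{U_r}(\mathbb{R})$ the coordinates can therefore take only finitely many values, forcing $\dim \mathsf{G}_{U_r}=0$; thus $\Gamma_{U_r}$ is finite, and being a subgroup of the torsion-free group $\Gamma/\Gamma_r$ (torsion-free because $\Lambda_r$ is a $\mathbb{Q}$-vector space), it is trivial. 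Your proposal never uses the algebraicity of Nash functions --- the only property distinguishing an affine Nash chart from an arbitrary open set --- so its central step is a genuine gap, not merely an omitted routine verification.
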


\begin{proof}
Let $\Gamma_{U_{r}}$ be the group
$$\{\gamma \in \Gamma/\Gamma_{r}| \gamma+U_{r}^{0}=U_{r}^{0}\}$$
We only need to show it is trivial.

Let $\mathsf{G}$ be a real connected commutative algebraic groups such that $G_{r}$ is a covering group of $\mathsf{G}(\mathbb{R})^{0}$. Let $\mathsf{G}_{U_{r}}$ be the Zariski closure of $\Gamma_{U_{r}}$ in $\mathsf{G}$. It is a real commutative algebraic group. Since $\mathsf{G}$ acts algebraically on itself, any rational function on $\mathsf{G}$ invariant under $\Gamma_{U_{r}}$-action is also invariant under $\mathsf{G}_{U_{r}}$-action.

Let $f_{1},f_{2}, \cdots, f_{m}$ be a family of Nash coordinate functions on $U_{r}^{0}$. Since $U_{r}^{0}$ is connected, these functions are algebraic over the rational function field $K(\mathsf{G})$ for the algebraic group $\mathsf{G}$. Let $g_{1}, g_{2}, \cdots, g_{m'}$ be the coefficient functions of minimal polynomials of $f_{1}, \cdots ,f_{m}$ in $K(\mathsf{G})$. By our assumption $g_{1}, g_{2}, \cdots, g_{m'}$ are stable under $\mathsf{G}_{U_{r}}(\mathbb{R})$. So for any $x \in U_{r}^{0}$, there exists an open (in the sense of ordinary topology) subset $U_{0}$ of $\mathsf{G}_{U_{r}}(\mathbb{R})$ such that for any $y \in x+U_{0}$ the coordinate $(f_{1}(y),f_{2}(y), \cdots, f_{m}(y))$ satisfy the same algebraic equations. So $U_{0}$ must be of dimension $0$. Thus $\mathsf{G}_{U_{r}}$ is of dimension $0$ and hence $\Gamma_{U_{r}}$ is finite. On the other hand
$\Gamma_{U_{r}}$ is a subgroup of a free abelian group. It follows that $\Gamma_{U_{r}}$ is trivial.
\end{proof}

Thanks to this lemma, we identify $U$ and such $U_{r}^{0}$ in the following discussion. As in the proof of Proposition \ref{sufficientforNash}, we view $\mathfrak{g}_{0}^{\mathrm{enca}}$ as a closed locally Nash subgroup in $\tilde{G}$, hence in $G_{r}$. Thus we have the following lemma.

\begin{lemt}\label{lemmaforNash2}
The closure of the image of $U$ in $G_{r}/\mathfrak{g}_{0}^{\mathrm{enca}}$ is compact.
\end{lemt}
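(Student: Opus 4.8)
The plan is to reduce the statement to a boundedness assertion on the universal cover of a compact real algebraic group, and then to play the affineness (semialgebraicity) of $U$ against the transcendence of the covering map. As in the proof of Proposition \ref{sufficientforNash}, I would choose a real connected commutative algebraic group $\mathsf{G}$ with $G_{r}$ a covering group of $\mathsf{G}(\mathbb{R})^{0}$ and with $\mathsf{G}_{\mathrm{aff}}=\mathsf{V}\times\mathsf{T}_{1}\times\mathsf{T}_{2}$, where $\mathsf{T}_{1}=\mathsf{G}_{m,\mathbb{R}}^{n_{1}}$ and $\mathsf{T}_{2}=\mathrm{SO}(2,\mathbb{R})^{n_{2}}$; then $\mathfrak{g}_{0}^{\mathrm{enca}}$ is identified with the simply connected real points of $\mathsf{V}\times\mathsf{T}_{1}$, while $C:=(\mathsf{T}_{2}\times\mathsf{A})(\mathbb{R})^{0}$ is compact. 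Writing $\rho\colon\mathfrak{g}_{0}\to\overline{\mathfrak{g}}_{0}:=\mathfrak{g}_{0}/\mathfrak{g}_{0}^{\mathrm{enca}}$, the space $\overline{\mathfrak{g}}_{0}$ is the universal cover of $C$ by a real-analytic, $L$-periodic map $e$ (with $L$ the full period lattice). Using Lemma \ref{lemmaforNash1} I identify $U$ with a connected component $U_{r}^{0}$ of its preimage in $G_{r}$, lift it to a connected $\widetilde{U}\subset\widetilde{G}=\mathfrak{g}_{0}$, and set $\widehat{U}:=\rho(\widetilde{U})$. Since the quotient $\overline{\mathfrak{g}}_{0}\to G_{r}/\mathfrak{g}_{0}^{\mathrm{enca}}$ is continuous and carries $\widehat{U}$ onto the image of $U$, it suffices to prove that $\widehat{U}$ is bounded in $\overline{\mathfrak{g}}_{0}$: then $\overline{\widehat{U}}$ is compact and its image contains (hence dominates the closure of) the image of $U$.

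The mechanism for boundedness is as follows. The composite $\widetilde{U}\to\overline{\mathfrak{g}}_{0}\xrightarrow{e}C$ equals the restriction of $\pi\circ c_{r}$, where $c_{r}\colon G_{r}\to\mathsf{G}(\mathbb{R})^{0}$ is the covering and $\pi\colon\mathsf{G}(\mathbb{R})^{0}\to C$ the projection, so $e(\widehat{U})$ lies in the compact group $C$; thus $\widehat{U}$ can only be unbounded if $U$ winds infinitely around the compact directions. I would rule this out by affineness. Fix a Nash embedding $U\hookrightarrow\mathbb{R}^{m}$ with coordinate functions $f_{1},\dots,f_{m}$; by the argument in the proof of Lemma \ref{lemmaforNash1} these are algebraic over $K(\mathsf{G})$. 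By contrast, the linear coordinates on $\overline{\mathfrak{g}}_{0}$, namely the angle coordinates of $\mathsf{T}_{2}$ and the uniformizing coordinates of the abelian variety $\mathsf{A}$, are transcendental over $K(\mathsf{G})$, because $e$ is assembled from the periodic functions $\cos,\sin$ (respectively theta-type functions) which generate the algebraic functions on $C\subseteq K(\mathsf{G})$. Heuristically, the $f_{j}$ are periodic in the compact directions and therefore cannot separate points that differ by an unbounded number of periods.

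To turn this into a contradiction, I would suppose $\widehat{U}$ unbounded and pick $p_{n}\in\widehat{U}$ with $\|p_{n}\|\to\infty$. As $e(p_{n})\in C$ is bounded, after passing to a subsequence $e(p_{n})\to q$, and since $e$ is a covering the $p_{n}$ cluster near distinct translates $p^{\ast}+\lambda_{n}$ with $\lambda_{n}\in L$ and $\|\lambda_{n}\|\to\infty$. Lifting the $\lambda_{n}$ to periods of $\mathsf{G}(\mathbb{R})^{0}$ (possible because $\pi$ induces an isomorphism on fundamental groups, its fibre being simply connected) and translating back, the corresponding points of $U$ are carried into an accumulating family of distinct deck-translates of $U$ on which the separating functions $f_{j}$ take coinciding values, contradicting that $f_{1},\dots,f_{m}$ give an embedding. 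The step I expect to be the main obstacle is precisely this last one: making rigorous that the semialgebraic set $U$ (see \cite{BCR}) cannot accommodate unbounded winding against the transcendental, globally non-definable covering $e$. I would handle it by passing to the o-minimal structure $\mathbb{R}_{\mathrm{an}}$, in which $U$ and the restriction of $e$ to any bounded region are definable, and deducing that infinite winding would produce a definable set with infinitely many connected components, violating o-minimal finiteness; the care needed in exploiting periodicity while the full covering is not itself definable is the crux of the proof.
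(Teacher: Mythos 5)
Your reduction --- pass to the covering $G_{r}/\mathfrak{g}_{0}^{\mathrm{enca}}$ of the compact group $C\cong\mathsf{G}(\mathbb{R})^{0}/\mathfrak{g}_{0}^{\mathrm{enca}}$ and show that the image of $U$ meets only a bounded region of the universal cover --- is sound and parallels the paper's setup. The gap is in the step that is supposed to produce the contradiction, and you flag it yourself. The assertion that the $f_{j}$ ``take coinciding values, contradicting that $f_{1},\dots,f_{m}$ give an embedding'' is a non sequitur: an embedding only forbids two distinct points of $U$ from having the same coordinate tuple; it does not forbid infinitely many distinct points of $U$ from lying in far-apart sheets over a convergent sequence in $C$. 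Nor can the needed finiteness come from the algebraicity of the $f_{j}$ over $K(\mathsf{G})$: that argument (as in Lemma \ref{lemmaforNash1}) bounds the values of the $f_{j}$ on a single fiber of $U\to\mathsf{G}(\mathbb{R})^{0}$, because rational functions on $\mathsf{G}$ are invariant under the deck group of $G_{r}\to\mathsf{G}(\mathbb{R})^{0}$; but your winding sequence $u_{n}$ lies over pairwise distinct points of $\mathsf{G}(\mathbb{R})^{0}$ whose images merely converge in $C$, and since the fibers of $\pi\colon\mathsf{G}(\mathbb{R})^{0}\to C$ are cosets of the noncompact group $(\mathsf{V}\times\mathsf{T}_{1})(\mathbb{R})^{0}$, these points need not accumulate, repeat, or lie in finitely many fibers. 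Finally, the o-minimal fallback cannot be run as stated, for the reason you yourself identify as the ``crux'': the hypothesis to be contradicted is unboundedness of $\widehat{U}$, so one cannot restrict $e$ to a bounded region where it is $\mathbb{R}_{\mathrm{an}}$-definable; the global $e$, being periodic and nonconstant, is definable in no o-minimal structure; and $\widehat{U}$ itself is not known to be definable, since the inclusion $U\subseteq G_{r}$ is only locally Nash. So the proposal is not a complete proof.

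The missing idea --- the one the paper actually uses --- is to apply the finiteness of the number of connected components of a semialgebraic set to the fibers of the Nash map $U\to C$, rather than to count values of functions or to track sequences. For $x\in C$ the preimage $C_{x}\subseteq U$ is an affine Nash (hence semialgebraic) set, so it has finitely many connected components; the fiber of $G_{r}\to C$ over $x$ is a disjoint union of cosets of $\mathfrak{g}_{0}^{\mathrm{enca}}$, so each connected component of $C_{x}$ lies in a single coset, i.e.\ maps to a single point of $G_{r}/\mathfrak{g}_{0}^{\mathrm{enca}}$. Thus $U$ meets only finitely many sheets over each point of $C$. The paper then covers the compact $C$ by finitely many open sets $U_{x}$ whose preimage components in $G_{r}/\mathfrak{g}_{0}^{\mathrm{enca}}$ have compact closure, and concludes that the image $U''$ of $U$ lies in a finite union of relatively compact sets, hence has compact closure. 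This component-counting is exactly the finiteness mechanism your proposal lacks; once it is in place, no sequence, transcendence, or o-minimality argument is needed.
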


\begin{proof}
Thanks to Lemma \ref{lemmaforNash1}, we view $U$ as an affine open submanifold of $G_{r}$ and we denote by $U''$ its image in $G_{r}/\mathfrak{g}_{0}^{\mathrm{enca}}$. Let $\mathsf{G}$ be a real connected commutative algebraic group such that $G_{r}$ is a covering group of $\mathsf{G}(\mathbb{R})^{0}$. Thus $\mathsf{G}(\mathbb{R})^{0}/\mathfrak{g}_{0}^{\mathrm{enca}}$ is a compact affine Nash group and we have a natural locally Nash map from $G_{r}/\mathfrak{g}_{0}^{\mathrm{enca}}$ to $\mathsf{G}(\mathbb{R})^{0}/\mathfrak{g}_{0}^{\mathrm{enca}}$.

For any point $x$ in $\mathsf{G}(\mathbb{R})^{0}/\mathfrak{g}_{0}^{\mathrm{enca}}$, we can find an open neighbourhood $U_{x}$ such that the closure of any connected component of its preimage in $G_{r}/\mathfrak{g}_{0}^{\mathrm{enca}}$ is compact. By the compactness of $\mathsf{G}(\mathbb{R})^{0}/\mathfrak{g}_{0}^{\mathrm{enca}}$, we may choose a finite set $J$ such that $\mathsf{G}(\mathbb{R})^{0}/\mathfrak{g}_{0}^{\mathrm{enca}}$ is covered by the family of open subsets $\{U_{x}\}_{x\in J}$.

View $U$ as an affine open submanifold of $G_{r}$, then the naturally induced map from $U$ to $\mathsf{G}(\mathbb{R})^{0}/\mathfrak{g}_{0}^{\mathrm{enca}}$ is Nash. The preimage $C_{x}$ of a point $x$ in $U$ is affine. So $C_{x}$ has finitely many connected components of the form $$\{(y+\mathfrak{g}_{0}^{\mathrm{enca}})\cap C_{x}|y\in I_{x}\},$$ where $I_{x}$ is a finite subset in $U$. Denote $\overline{y}$ the image of $y$ in $G_{r}/\mathfrak{g}_{0}^{\mathrm{enca}}$ and denote $U_{y}''$ the connected component of the preimage of $U_{x}$ in $\mathsf{G}(\mathbb{R})^{0}/\mathfrak{g}_{0}^{\mathrm{enca}}$ containing $\overline{y}$. Then the family of open sets $$\{U_{y}''|x\in J, y\in I_{x}\}$$cover $U''$. So the closure of $U''$ is compact.
\end{proof}

\begin{thm}
Let $G$ be the connected commutative locally Nash group associated with the triple $(\mathfrak{g}_{0},\Lambda_{r},\Gamma)$. Then the group $G$ is Nash if and only if $\Gamma$ spans the vector space $\mathfrak{g}_{0}/\mathfrak{g}_{0}^{\mathrm{enca}}$.
\end{thm}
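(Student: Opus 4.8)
The ``if'' direction is precisely Proposition \ref{sufficientforNash}, so the entire content of the converse is what remains to be planned. The plan is to assume $G$ is Nash and reduce the spanning assertion to the compactness of $G/\mathfrak{g}_0^{\mathrm{enca}}$, then deduce that compactness from the two preceding lemmas by a finite-cover argument. First I would record the elementary reformulation of the goal: writing $V=\mathfrak{g}_{0}/\mathfrak{g}_{0}^{\mathrm{enca}}$ and letting $\Gamma_{0}$ denote the image of $\Gamma$ in $V$, one has $G/\mathfrak{g}_{0}^{\mathrm{enca}}=V/\Gamma_{0}$; since $\Gamma$ is finitely generated, $\Gamma_{0}$ spans $V$ over $\mathbb{R}$ if and only if $V/\Gamma_{0}$ is compact. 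Hence it suffices to show that $G$ Nash forces $G/\mathfrak{g}_{0}^{\mathrm{enca}}$ to be compact.

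Next, because $G$ is a Nash manifold it carries a finite atlas, so $G$ is covered by finitely many affine open Nash submanifolds $U_{1},\dots,U_{k}$ (each chart domain, being Nash diffeomorphic to an open semialgebraic subset of some $\mathbb{R}^{n}$, is affine). For each $i$ I would invoke Lemma \ref{lemmaforNash1} to identify $U_{i}$ with a fixed connected component of its preimage in $G_{r}$, thereby viewing $U_{i}$ as an affine open submanifold of $G_{r}$; Lemma \ref{lemmaforNash2} then yields that the closure of the image of $U_{i}$ in $G_{r}/\mathfrak{g}_{0}^{\mathrm{enca}}$ is compact. Composing with the natural continuous (covering) projection $G_{r}/\mathfrak{g}_{0}^{\mathrm{enca}}\to G/\mathfrak{g}_{0}^{\mathrm{enca}}$, the closure of the image of each $U_{i}$ in $G/\mathfrak{g}_{0}^{\mathrm{enca}}$ is again compact, since a continuous image of a compact set is compact.

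Finally, as the $U_{i}$ cover $G$, their images cover $G/\mathfrak{g}_{0}^{\mathrm{enca}}$, which is thus a finite union of relatively compact sets and hence compact. By the reformulation of the first paragraph this gives that $\Gamma_{0}$ spans $V=\mathfrak{g}_{0}/\mathfrak{g}_{0}^{\mathrm{enca}}$, completing the converse and the theorem.

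The genuinely substantive input is already isolated in Lemma \ref{lemmaforNash2} (through the auxiliary algebraic group $\mathsf{G}$ and its compact affine Nash quotient $\mathsf{G}(\mathbb{R})^{0}/\mathfrak{g}_{0}^{\mathrm{enca}}$), so within the theorem the work is mostly bookkeeping of the two coverings $\tilde{G}\to G_{r}\to G$. The main obstacle I anticipate is precisely keeping this bookkeeping consistent: choosing a single connected component per $U_{i}$ in the identification of Lemma \ref{lemmaforNash1}, and verifying that relative compactness genuinely passes along the projection $G_{r}/\mathfrak{g}_{0}^{\mathrm{enca}}\to G/\mathfrak{g}_{0}^{\mathrm{enca}}$. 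The remaining ingredients are routine: the standard fact that a finitely generated subgroup of a finite-dimensional real vector space is cocompact exactly when it spans (this is where discreteness and finite generation of $\Gamma$ enter), and the observation that the definition of a Nash manifold as a locally Nash manifold with a finite atlas is exactly what produces the finite cover by affine open Nash submanifolds to which the lemmas apply.
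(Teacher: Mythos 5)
Your proposal is correct and takes essentially the same route as the paper: the same finite affine Nash cover, the same use of Lemmas \ref{lemmaforNash1} and \ref{lemmaforNash2} to view each chart inside $G_{r}$ with relatively compact image in $G_{r}/\mathfrak{g}_{0}^{\mathrm{enca}}$, and the same compactness-versus-vector-space conclusion. The only difference is packaging: you establish compactness of $G/\mathfrak{g}_{0}^{\mathrm{enca}}$ and then invoke the (correct, and in fact valid without discreteness or finite generation) cocompact-iff-spanning fact, whereas the paper inlines that fact as a proof by contradiction, projecting onto the vector space $\mathfrak{g}_{0}/(\mathfrak{g}_{0}^{\mathrm{enca}}+\mathfrak{g}_{0}^{\Gamma})$ and noting that the image of $U_{i}+\gamma$ there is independent of $\gamma$ --- which is exactly what your passage to $G/\mathfrak{g}_{0}^{\mathrm{enca}}$ accomplishes.
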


\begin{proof}
We only need to prove the only if part. We prove it by contradiction. Let $\mathfrak{g}_{0}^{\Gamma}$ denote the real vector subspace in $\mathfrak{g}_{0}$ spanned by $\Gamma$. If $\mathfrak{g}_{0}/\mathfrak{g}_{0}^{\mathrm{enca}}$
is not spanned by $\Gamma$, then the vector space $\mathfrak{g}_{0}/(\mathfrak{g}_{0}^{\mathrm{enca}}+\mathfrak{g}_{0}^{\Gamma})$ is not zero. Thus we have a natural map from $G_{r}$ to $\mathfrak{g}_{0}/(\mathfrak{g}_{0}^{\mathrm{enca}}+\mathfrak{g}_{0}^{\Gamma})$.

On the other hand, let $U_{1}, U_{2}, \cdots ,U_{m}$ be a finite affine Nash covering of $G$. By Lemma \ref{lemmaforNash1}, $G_{r}$ is covered by the following family of affine Nash open sets
    $$\{U_{i}+ \gamma|1\leq i \leq m , \gamma \in \Gamma/\Gamma_{r}\}.$$
Notice that the affine open submanifold $U_{i}$ in $G_{r}$ is viewed as a connected component of the preimage of $U_{i}$ thanks to Lemma \ref{lemmaforNash1}. It is obvious that the image of $U_{i}+\gamma$ in $\mathfrak{g}_{0}/(\mathfrak{g}_{0}^{\mathrm{enca}}+\mathfrak{g}_{0}^{\Gamma})$ is independent of $\gamma$. Since the natural map from $G_{r}$ to $\mathfrak{g}_{0}/(\mathfrak{g}_{0}^{\mathrm{enca}}+\mathfrak{g}_{0}^{\Gamma})$ factors through the group $G_{r}/\mathfrak{g}_{0}^{\mathrm{enca}}$, the image of $U_{i}$ in $\mathfrak{g}_{0}/(\mathfrak{g}_{0}^{\mathrm{enca}}+\mathfrak{g}_{0}^{\Gamma})$ is contained in a compact set. Thus the image of $G_{r}$ in $\mathfrak{g}_{0}/(\mathfrak{g}_{0}^{\mathrm{enca}}+\mathfrak{g}_{0}^{\Gamma})$ is also contained in a compact set. However, the natural map from $G_{r}$ to $\mathfrak{g}_{0}/(\mathfrak{g}_{0}^{\mathrm{enca}}+\mathfrak{g}_{0}^{\Gamma})$ is surjective, this leads to a contradiction with the assumption that $\mathfrak{g}_{0}/(\mathfrak{g}_{0}^{\mathrm{enca}}+\mathfrak{g}_{0}^{\Gamma})$ is not zero.
\end{proof}

\section{Affineness and toroidal affineness}\label{affinetoroidal}

We determine the conditions under which a connected commutative locally Nash group becomes affine (resp. toroidal affine) using triples in the category $\mathcal{D}$. Let $G$ be a connected commutative locally Nash group and let $(\mathfrak{g}_{0},\Lambda_{r},\Gamma)$ be the triple associated with $G$.

\begin{prp}\label{affine}
The group $G$ is affine if and only if $\Lambda_{r}\cap\mathfrak{g}_{0}=\Gamma \otimes_{Z}\mathbb{Q}$.
\end{prp}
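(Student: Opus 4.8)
The plan is to work through the identification $\tilde{G}=\mathfrak{g}_0$, so that $G=\mathfrak{g}_0/\Gamma$, and to compare $\Gamma$ with the kernels of the covering maps $\tilde{G}\to\mathsf{G}'(\mathbb{R})^0$ attached to the real algebraic models $\mathsf{G}'$ of the isogeny class determined by $(\mathfrak{g}_0,\Lambda_r)$. The first step is to record the dictionary coming from Section \ref{rcag2}: if $\Lambda'$ is a full lattice in $\mathfrak{g}$ with $\Lambda'\otimes_{\mathbb{Z}}\mathbb{Q}=\Lambda_r$ for which $(\mathfrak{g}_0,\Lambda')$ is a filtered polarizable lattice, then by Theorem \ref{equivalence2} the associated real algebraic group $\mathsf{G}'$ satisfies $\mathsf{G}'(\mathbb{R})^0=\mathfrak{g}_0/(\Lambda'\cap\mathfrak{g}_0)$, the subgroup $\Lambda'\cap\mathfrak{g}_0$ being the kernel of the map $\mathfrak{g}_0\to\mathfrak{g}/\Lambda'$ realising $\mathsf{G}'(\mathbb{R})^0$ inside the complex group $\mathfrak{g}/\Lambda'$, and $\tilde{G}$ is its universal covering. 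Since taking $\sigma$-invariants commutes with $\otimes_{\mathbb{Z}}\mathbb{Q}$, one always has $(\Lambda'\cap\mathfrak{g}_0)\otimes_{\mathbb{Z}}\mathbb{Q}=\Lambda_r^{\sigma=1}=\Lambda_r\cap\mathfrak{g}_0$, independently of the choice of $\Lambda'$.

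For the sufficiency, assume $\Lambda_r\cap\mathfrak{g}_0=\Gamma\otimes_{\mathbb{Z}}\mathbb{Q}$. Then $\Gamma$ is a full lattice in $\Lambda_r^{\sigma=1}$, and I would pick any full lattice $\Delta$ in the $(-1)$-eigenspace $\Lambda_r^{\sigma=-1}$ and set $\Lambda'=\Gamma\oplus\Delta$. This is automatically $\mathrm{Gal}(\mathbb{C}/\mathbb{R})$-stable, is a full lattice in $\mathfrak{g}$ with $\Lambda'\otimes_{\mathbb{Z}}\mathbb{Q}=\Lambda_r$, and satisfies $\Lambda'\cap\mathfrak{g}_0=\Gamma$. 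The filtration conditions in the definition of a filtered polarizable lattice pass from $(\mathfrak{g}_0,\Lambda_r)$ to $(\mathfrak{g}_0,\Lambda')$ for free, since $(F\cap\Lambda')\otimes_{\mathbb{Z}}\mathbb{Q}=F\cap\Lambda_r$ for every real subspace $F$ of $\mathfrak{g}$; the polarization is obtained by rescaling the rational form $S$ by a positive integer clearing the denominators of $E$ on the finitely generated group $\Lambda'/(\mathrm{Fil}^a\mathfrak{g}\cap\Lambda')$, which preserves positive definiteness. Hence $(\mathfrak{g}_0,\Lambda')$ corresponds to a real algebraic group $\mathsf{G}'$ with $\mathsf{G}'(\mathbb{R})^0=\mathfrak{g}_0/\Gamma=G$, and as the identity component of the real points of a real algebraic group is an affine Nash group, $G$ is affine.

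For the necessity, suppose $G$ is affine. By the characterisation of affine Nash groups as finite covers of real algebraic groups, $G$ is locally Nash isomorphic to a finite covering of $\mathsf{H}(\mathbb{R})^0$ for some real algebraic group $\mathsf{H}$. The triple attached to $\mathsf{H}(\mathbb{R})^0$ is $(\mathfrak{g}_0,\Lambda_r,\Lambda''\cap\mathfrak{g}_0)$ with $\Lambda''\otimes_{\mathbb{Z}}\mathbb{Q}=\Lambda_r$, and a finite covering simply replaces $\Lambda''\cap\mathfrak{g}_0$ by a finite-index sublattice; thus the triple of that covering satisfies the identity $\Gamma\otimes_{\mathbb{Z}}\mathbb{Q}=(\Lambda''\cap\mathfrak{g}_0)\otimes_{\mathbb{Z}}\mathbb{Q}=\Lambda_r\cap\mathfrak{g}_0$ by the computation of the first paragraph. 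Because this identity is invariant under isomorphism in $\mathcal{D}$ (an isomorphism carries $\mathfrak{g}_0$, $\Lambda_r$ and $\Gamma$ bijectively onto their counterparts), it also holds for the triple of $G$, which is the desired conclusion.

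I expect the main obstacle to be the sufficiency construction, namely promoting the purely rational datum $(\mathfrak{g}_0,\Lambda_r)$ together with the lattice $\Gamma$ to an honest integral filtered polarizable lattice $\Lambda'$ whose real points recover exactly $\Gamma$. The delicate bookkeeping there is verifying that the two filtration conditions genuinely survive the passage from $\Lambda_r$ to its sublattice $\Lambda'$ and that positive definiteness of the polarization is not destroyed by the integer rescaling; the necessity direction is then essentially formal once affineness is translated, via the earlier equivalences, into membership in the class of finite covers of real algebraic groups.
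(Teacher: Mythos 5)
Your proposal is correct and takes essentially the same route as the paper: for sufficiency the paper likewise forms the integral lattice $\Lambda_{-}\oplus\Gamma$ (your $\Gamma\oplus\Delta$, with $\Lambda_{-}$ a maximal-rank free $\mathbb{Z}$-submodule of $\Lambda_{r}^{\sigma=-1}$) and invokes the equivalence with real connected commutative algebraic groups to realize $G$ as an identity component, and for necessity it identifies $\Gamma$ with $\Lambda\cap\mathfrak{g}_{0}$ for an algebraic model and observes the condition is stable under isogenies/finite covers, exactly as you do. The only quibble is terminological: your $\Lambda'$ is a full lattice in $\Lambda_{r}$ (hence a discrete, generally non-full, lattice in $\mathfrak{g}$), not a ``full lattice in $\mathfrak{g}$''; this does not affect the argument, since what you actually verify and use are the filtered polarizable lattice axioms.
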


\begin{proof}
We prove the if part first. It is known that an affine Nash group is a finite cover of a real algebraic group. We first suppose that $G$ is the identity component of a real connected commutative algebraic group $\mathsf{G}$. Thus the filtered polarizable lattice associated with $\mathsf{G}$ is $(\mathfrak{g}_{0},\Lambda)$ with $\Lambda_{r}=\Lambda \otimes_{Z} \mathbb{Q}$. As a Lie group, the group $G$ is isomorphic to $\mathfrak{g}_{0}/(\Lambda \cap \mathfrak{g}_{0})$. So we can identify $\Gamma$ with  $\Lambda \cap \mathfrak{g}_{0}$. In this case we get $\Lambda_{r} \cap \mathfrak{g}_{0} = \Gamma \otimes_{Z}\mathbb{Q}$. The property is stable under isogenies, so it holds for general affine Nash groups.

Conversely, if $(\mathfrak{g}_{0},\Lambda_{r},\Gamma)$ satisfies $V_{r} \cap \mathfrak{g}_{0} = \Gamma \otimes_{Z}\mathbb{Q}$, we take a finitely generated free $\mathbb{Z}$-submodule $\Lambda_{-}$ in the rational space $V_{r}^{\sigma=-1}$ of maximal possible rank, where as before, $\sigma$ is the conjugate map in $\mathrm{Gal}(\mathbb{C}/\mathbb{R})$. Let $\mathsf{G}$ be the  algebraic group associated with $(\mathfrak{g}_{0},\Lambda_{-}\oplus\Gamma)$. Then $G$ is the identity component of $\mathsf{G}$ and is naturally affine.
\end{proof}

In one-dimensional case, our criterion for affineness coincides with the classification of "embeddable" Nash groups in \cite{MS}.

Recall that an affine Nash group is said to be anti-linear if it has no quotient Nash group which is almost linear and positive dimensional. A toroidal affine Nash group \cite{Can} is defined to be a connected anti-linear affine Nash group.
The split dimensions $d_v, d_{t,+}, d_{t,-}$ of a connected commutative locally Nash group have been defined in the end of Section \ref{scclng}.

\begin{prp}
The group $G$ is toroidal affine if and only if $\Lambda_{r}\cap \mathfrak{g}_{0}= \Gamma \otimes_{Z}\mathbb{Q}$ and $d_{t,+}=d_{t,-} =d_{v}=0$.
\end{prp}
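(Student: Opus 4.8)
The plan is to decouple the two conditions defining toroidal affineness. By definition a toroidal affine Nash group is a connected anti-linear affine Nash group, so I would first apply Proposition~\ref{affine}, which already gives that $G$ is affine precisely when $\Lambda_{r}\cap\mathfrak{g}_{0}=\Gamma\otimes_{\mathbb{Z}}\mathbb{Q}$. It then remains to prove that, for an affine $G$, the anti-linearity of $G$ is equivalent to $d_{v}=d_{t,+}=d_{t,-}=0$. Following the proof of Proposition~\ref{affine}, I may assume $G=\mathsf{G}(\mathbb{R})^{0}$ for a real connected commutative algebraic group $\mathsf{G}$ whose associated datum is $(\mathfrak{g}_{0},\Lambda_{r})$; since the split dimensions and the anti-affine subgroup $\mathsf{G}_{\mathrm{ant}}$ are isogeny invariants, this reduction is harmless.

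The heart of the argument is to identify the maximal almost linear quotient of $G$ with the real identity component of the maximal affine algebraic quotient $\mathsf{G}/\mathsf{G}_{\mathrm{ant}}$. Using Corollary~\ref{antparametrization}, which records the marked polarizable lattice of $\mathsf{G}_{\mathrm{ant}}$, I would read off that the additive part of $\mathsf{G}/\mathsf{G}_{\mathrm{ant}}$ has dimension $d_{v}=\dim\ker\phi_{v}$ and that its toric part has character lattice identified with $\ker\phi_{t}$; taking $\sigma$-eigenspaces and using $\mathrm{rank}(\ker\phi_{t})=\mathrm{rank}(\ker\phi_{t})^{\sigma=1}+\mathrm{rank}(\ker\phi_{t})^{\sigma=-1}$, this exhibits $\mathsf{G}/\mathsf{G}_{\mathrm{ant}}$ as isogenous to $\mathsf{G}_{a}^{d_{v}}\times\mathsf{G}_{m}^{d_{t,+}}\times\mathrm{SO}(2)^{d_{t,-}}$, of dimension $d_{v}+d_{t,+}+d_{t,-}$. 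Passing to real identity components yields a surjection of $G$ onto $L:=(\mathsf{G}/\mathsf{G}_{\mathrm{ant}})(\mathbb{R})^{0}$, an almost linear Nash group of the same dimension. Hence if some split dimension is positive then $L$ is positive dimensional and $G$ is not anti-linear.

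For the converse I would assume $d_{v}=d_{t,+}=d_{t,-}=0$, so that $\mathsf{G}/\mathsf{G}_{\mathrm{ant}}$ has dimension $0$ and $\mathsf{G}=\mathsf{G}_{\mathrm{ant}}$ is anti-affine. Suppose for contradiction that $G$ admits a surjection onto a positive dimensional almost linear Nash group $L'$; composing with the finite covering of $L'$ onto the real points of its underlying linear algebraic group produces a surjection of $G$ onto a positive dimensional linear group. Since affine Nash groups are finite covers of real algebraic groups and homomorphisms between them are algebraic up to isogeny by \cite{HP1}, \cite{HP2} and \cite{FS}, this surjection is induced by an algebraic homomorphism from a group $\mathsf{G}'$ isogenous to $\mathsf{G}$ onto a positive dimensional linear group, forcing $\mathsf{G}'/\mathsf{G}'_{\mathrm{ant}}$, and hence $\mathsf{G}/\mathsf{G}_{\mathrm{ant}}$, to be positive dimensional---a contradiction. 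Combining the two directions with the affineness criterion gives the stated equivalence.

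The step I expect to be the main obstacle is the faithful translation between the Nash-theoretic notion of anti-linearity (absence of a positive dimensional almost linear Nash quotient) and the algebraic notion of anti-affineness (triviality of $\mathsf{G}/\mathsf{G}_{\mathrm{ant}}$). This rests on algebraizing an arbitrary almost linear Nash quotient so that it factors through the maximal affine algebraic quotient, and on checking that the dimension count extracted from Corollary~\ref{antparametrization} is insensitive to the possible presence of Weil-restriction tori, for which one uses that over $\mathbb{Q}$ every real torus is isogenous to a product of split and compact one-dimensional tori, so that the $\sigma=\pm1$ eigenspaces of $\ker\phi_{t}$ account for its full rank. The remaining points---surjectivity on real identity components and the isogeny invariance of the split dimensions---are routine.
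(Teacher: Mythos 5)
Your proof is correct, and its skeleton is the same as the paper's: reduce via Proposition~\ref{affine} to the case $G=\mathsf{G}(\mathbb{R})^{0}$ for an algebraic $\mathsf{G}$, and pivot through anti-affineness of $\mathsf{G}$. The difference is that the paper disposes of the two remaining equivalences by citation---``$G$ toroidal affine $\Leftrightarrow$ $\mathsf{G}$ anti-affine'' is \cite[Lemmas 3.3 and 3.5]{Can}, and ``$\mathsf{G}$ anti-affine $\Leftrightarrow$ $d_{v}=d_{t,+}=d_{t,-}=0$'' is \cite[Proposition 2.11]{Br}---whereas you reprove both from the paper's own machinery: the second via the dimension count $\dim(\mathsf{G}/\mathsf{G}_{\mathrm{ant}})=d_{v}+d_{t,+}+d_{t,-}$ extracted from Corollary~\ref{antparametrization} (your remark that the $\sigma=\pm1$ eigenspaces of $\ker\phi_{t}$ account for its full rank up to $\mathbb{Q}$-isogeny is exactly the point needed there), and the first via an algebraization of an arbitrary almost linear Nash quotient. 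This buys self-containedness: the criterion is seen to follow from the paper's classification data alone, rather than from external structure theory. What it costs is rigor at the step you yourself flag: the assertion that a Nash surjection onto a positive-dimensional almost linear group is ``algebraic up to isogeny'' is not literally a quotable statement of \cite{HP1}, \cite{HP2}, \cite{FS}; it is essentially the content of Can's two lemmas, and in this paper it is most cleanly justified by lifting the quotient map to universal coverings and invoking Proposition~\ref{inj}, whose image-of-the-graph construction produces the algebraic homomorphism (onto an affine quotient, contradicting anti-affineness) that your sketch requires. With that substitution your argument closes completely; as written, that one step is borrowed folklore rather than proof.
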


\begin{proof}
As in the proof of Proposition \ref{affine}, we may assume that the affine Nash group $G$ is Nash equivalent to the identity component of a real connected commutative algebraic group $\mathsf{G}$. The filtered polarizable lattice $(\mathfrak{g}_{0}, \Lambda)$ associated with $\mathsf{G}$ satisfies that $\Lambda_{r}=\Lambda\otimes_{\mathbb{Z}}\mathbb{Q}$. By \cite[Lemmas 3.3 and 3.5]{Can}, $G$ is a toroidal affine Nash group if and only if $\mathsf{G}$ is an anti-affine algebraic group. By \cite[Proposition 2.11]{Br}, $\mathsf{G}$ is anti-affine if and only if $d_{t,+}=d_{t,-}=d_{v}=0$.
\end{proof}

\section{Examples in dimension $1$ and $2$}\label{lowdimension}

In this section, we show that our classification matches the previous work in \cite{MS} and \cite{BVO} for one-dimensional and two-dimensional cases. Recall the notations of split dimensions at the end of  Section \ref{scclng}. It is obvious that the sum of these three split dimensions is no larger than the dimension of the locally Nash group. In one-dimensional case, there are
$4$-possible cases for split dimensions (cf. \cite[Theorem 1]{MS}):
\begin{itemize}
    \item $d_{v}=1$. The corresponding simply connected locally Nash group is $\mathbb{R}$ with the ordinary additive structure.
    \item $d_{t,+}=1$. The corresponding simply connected locally Nash group is $\mathbb{R}_{+}$ with the ordinary multiplicative structure.
    \item $d_{t,-}=1$. The corresponding simply connected locally Nash group is the universal covering of $\mathrm{SO}(2,\mathbb{R})$.
    \item $d_{v}=d_{t,+}=d_{t,-}=0$. The corresponding simply connected locally Nash groups are the universal coverings of the identity component of real elliptic curves.
\end{itemize}
For each class, two discrete lattices $\Gamma_{1}, \Gamma_{2}$ define the same locally Nash group if and only if $\Gamma_{2}=a\Gamma_{1}$ with $a \in \mathbb{Q}^{\times}$.

In two-dimensional case, there are $3$-possible cases for split dimensions (cf. \cite[Proposition 7.4]{BVO}):
\begin{itemize}
    \item $d_{v}+d_{t,+}+d_{t,-}=2$. The corresponding simply connected locally Nash group is the direct sum of $\mathbb{R}$, $\mathbb{R}_{+}$ or the universal covering of $\mathrm{SO}(2,\mathbb{R})$.
    \item $d_{v}+d_{t,-}+d_{t,+}=1$. The corresponding simply connected locally Nash group is the direct sum of $\mathbb{R}$, $\mathbb{R}_{+}$ or the universal covering of $\mathrm{SO}(2,\mathbb{R})$ with the universal covering of the identity component of a real elliptic curve.
    \item $d_{t,+}=d_{t,-}=d_{v}=0$ and $U_{t,-}=U_{t,+}=U_{v}=0$.  The corresponding simply connected locally Nash group is the universal covering of the identity component of a product of two real elliptic curves or of a simple two-dimensional real abelian variety.
    \item $d_{t,+}=d_{t,-}=d_{v}=0$ and $U_{t,-},U_{t,+}, U_{v}$ are not all trivial. The corresponding simply connected locally Nash group is a non-split extension of $\mathbb{R}$, $\mathbb{R}_{+}$ or the universal covering of $\mathrm{SO}(2,\mathbb{R})$ by the universal covering of the identity component of a real elliptic curve.
\end{itemize}
In the last case, we suppose the real elliptic curve is of real period $1$ and of imaginary period $a\sqrt{-1}$($a \in \mathbb{R}_{+}$). If $U_{v}\neq 0$, the non-split extension is unique because $\dim U_{v}=1$. If $U_{t,+}\neq 0$ (resp. $U_{t,-}\neq 0$), the non-split extensions can be parametrized by an element in
\[
(((\mathfrak{g}/\mathrm{Fil}^{a}\mathfrak{g})^{\check{}}/\Lambda_{r}^{\check{}})^{\sigma =1}\backslash\{0\})/\mathbb{Q}^{*} \,\,\text{(resp. $(((\mathfrak{g}/\mathrm{Fil}^{a}\mathfrak{g})^{\check{}}/\Lambda_{r}^{\check{}})^{\sigma =-1}\backslash\{0\})/\mathbb{Q}^{*}) $}
\]
in multiplicative (resp. twisted multiplicative) case.

We show that the classification in \cite{BVO} matches ours. The only nontrivial case is the last one. We review the constructions in \cite{BVO}. First recall some facts about Weierstrass $\wp$($\zeta$, $\sigma$)-functions. Let
$$\wp_{a\sqrt{-1}}(z)=\frac{1}{z^{2}}+\sum_{(m,n) \neq 0}\left(\frac{1}{(z-m - na\sqrt{-1})^{2}}-\frac{1}{(m +na\sqrt{-1})^{2}}\right),$$
$$\zeta_{a\sqrt{-1}}(z)=\frac{1}{z}+\sum_{(m,n) \neq 0}\left(\frac{1}{z-m - na\sqrt{-1}}+\frac{1}{m +na\sqrt{-1}}+\frac{z}{(m +na\sqrt{-1})^{2}}\right),$$
$$\sigma_{a\sqrt{-1}}(z)=z\prod_{(m,n) \neq 0}\left(1-\frac{z}{m +na\sqrt{-1}}\right)e^{z/(m+na\sqrt{-1})+(z/(m+na\sqrt{-1}))^{2}/2}.$$
We have the following (quasi)-periodic relations for them:
$$\wp_{a\sqrt{-1}}(z+m +na\sqrt{-1})=\wp_{a\sqrt{-1}}(z),$$
$$\zeta_{a\sqrt{-1}}(z+m +na\sqrt{-1})=\zeta_{a\sqrt{-1}}(z)+m\eta_{a\sqrt{-1}}(1)+n\eta_{a\sqrt{-1}}(a\sqrt{-1}),$$
\begin{equation*}
\begin{split}
\sigma_{a\sqrt{-1}}(z+m +na\sqrt{-1})&=(-1)^{(m,n)}e^{(m\eta_{a\sqrt{-1}}(1)+n\eta_{a\sqrt{-1}}(a\sqrt{-1}))z}\cdot\\
&e^{(m\eta_{a\sqrt{-1}}(1)+n\eta_{a\sqrt{-1}}(a\sqrt{-1}))(m+n\sqrt{-1})/2}\sigma_{a\sqrt{-1}}(z),
\end{split}
\end{equation*}
where $\eta_{a\sqrt{-1}}(1)$ is a real number, $\eta_{a\sqrt{-1}}(a\sqrt{-1})$ is a pure imaginary number, and they satisfy the following Legendre relations:
$$a\eta_{a\sqrt{-1}}(1)\sqrt{-1}-\eta_{a\sqrt{-1}}(a\sqrt{-1})=2\pi\sqrt{-1}.$$
For any $\xi \in \mathbb{C}$, define meromorphic functions $\tilde{\sigma}_{a\sqrt{-1},\xi}$ as (Note that the sign is a little different from \cite{BVO})
$$\tilde{\sigma}_{a\sqrt{-1},\xi} = \frac{\sigma_{a\sqrt{-1}}(z+\xi)}{\sigma_{a\sqrt{-1}}(z)}.$$
We have the following quasi-periodic relations:
$$\tilde{\sigma}_{a\sqrt{-1},\xi}(z+m+na\sqrt{-1})= e^{(m\eta_{a\sqrt{-1}}(1)+n\eta_{a\sqrt{-1}}(a\sqrt{-1}))\xi}\tilde{\sigma}_{a\sqrt{-1},\xi}(z).$$

The constructions of non-split extensions in \cite{BVO} can be summarized as follows. In the paper, the authors constructed non-split extensions of $\mathsf{H}(\mathbb{C})$ by $\mathsf{A}(\mathbb{C})$, where $\mathsf{H}$ is a real additive (resp. multiplicative and twisted multiplicative) group and $\mathsf{A}$ is a real elliptic curve. By Galois descent theory, the non-split extensions can be descended into non-split extensions of $\mathbb{R}$ (resp. $\mathbb{R}_{+}$ and $\mathrm{SO}(2,\mathbb{R})$) by $\mathsf{A}(\mathbb{R})^{0}$. Thus the non-split extensions could be pulled back to their universal coverings. Let $\Pi_{0}=\mathbb{Z}\times \{0\}$ in $\mathbb{R}^{2}$ and $\Pi=(\mathbb{Z}+a\sqrt{-1}\mathbb{Z})\times \{0\}$ in $\mathbb{C}^{2}$. We write
\begin{itemize}
  \item Additive case: $\Pi_{0}'=\{0\}\times \{0\}$ in $\mathbb{R}^{2}$ and $\Pi'=\{0\}\times \{0\}$ in $\mathbb{C}^{2}$;
  \item Multiplicative case: $\Pi'_{0}=\{0\}\times \{0\}$ and $\Pi'=\{0\}\times 2\pi\sqrt{-1}\mathbb{Z}$;
  \item Twisted multiplicative case: $\Pi'_{0}= \{0\}\times 2\pi\mathbb{Z}$ and $\Pi'=\{0\} \times 2\pi\mathbb{Z}$.
\end{itemize}
The quotient group $\mathbb{C}^{2}/\Pi'\times \Pi$ fits in the following exact sequence:
$$0 \to \mathbb{C}/\Pi' \to \mathbb{C}^{2}/\Pi'\times \Pi \to \mathbb{C}/\Pi \to 0.$$
Identify $\mathbb{C}/\Pi'$ with $\mathsf{H}(\mathbb{C})$ (by exponential map in multiplicative and twisted multiplicative cases) and $\mathbb{C}/\Pi$ with $\mathsf{A}(\mathbb{C})$ by the Weierstrass $\wp$-function with period lattice $\Pi$.

We construct the dual of $\mathbb{C}/\Pi$ as follows. First, we identify the space of anti-linear maps on $\mathbb{C}$ with the space $\mathbb{C}$ by sending a complex number $\alpha$ to an anti-linear map $\frac{\varphi_{\alpha}}{a}$ on $\mathbb{C}$ defined by $\varphi_{\alpha}(z)=\alpha\bar{z}$. Then we use the map from $\mathbb{C}/\Pi$ to $\Hom(\Pi, \mathrm{U}(1))$ which sends $\alpha$ to the map $\lambda \mapsto e^{2\frac{\pi}{a} \Im(\alpha \bar{\lambda})\sqrt{-1}}$ to identify $\mathbb{C}/\Pi$ with the dual of $\mathbb{C}/\Pi$. By Appell-Humbert Theorem \cite[Chapter 1, Section 2]{Mu}, we have
$$(((\mathfrak{g}/\mathrm{Fil}^{a}\mathfrak{g})^{\check{}}/\Lambda_{r}^{\check{}})^{\sigma =1}\backslash\{0\})/\mathbb{Q}^{*}= ((\mathbb{R}/\mathbb{Q})\backslash\{0\})/\mathbb{Q}^{*},$$
$$(((\mathfrak{g}/\mathrm{Fil}^{a}\mathfrak{g})^{\check{}}/\Lambda_{r}^{\check{}})^{\sigma =-1}\backslash\{0\})/\mathbb{Q}^{*}= ((\sqrt{-1}\mathbb{R}/a\sqrt{-1}\mathbb{Q})\backslash\{0\})/\mathbb{Q}^{*}.$$
On an open dense subset of $\mathbb{C}^{2}/\Pi'\times \Pi$, there is a bi-holomorphic map (which is not a group homomorphism) from $\mathbb{C}^{2}/\Pi'\times \Pi$ to  $\mathbb{C}/\Pi\times \mathbb{C}/\Pi'$ by
$$(u,v) \to ([\wp(u):\wp'(u):1], g(v)\ominus f(u))$$
where the function $g(v)=v$ (resp. $g(v)=e^{v}$ and $g(v)=e^{\sqrt{-1}v}$), $f(u)=\zeta_{a\sqrt{-1}}(u)$ (resp. $f(u)=\sigma_{a\sqrt{-1},\xi}(u)$ and $f(u)=\sigma_{a\sqrt{-1},\xi\sqrt{-1}}(u)$) and $\oplus$ is the addition map on $\mathsf{H}(\mathbb{C})$ in the additive case (resp. in the multiplicative case and in the twisted multiplicative case). The map induces a real algebraic structure on this open set. By translation, we get a real algebraic structure on whole $\mathbb{C}^{2}/\Pi'\times \Pi$. In this way we view $\mathbb{C}^{2}/\Pi'\times \Pi$ as the set of complex points of a real algebraic group $\mathsf{G}$. The real algebraic group $\mathsf{G}$ fits in the following exact sequence:
$$0\to \mathsf{H} \to \mathsf{G} \to \mathsf{A} \to 0.$$
We have a rational section $s$ from $\mathsf{A}(\mathbb{C})$ to $\mathsf{G}(\mathbb{C})$ defined by
$$[\wp(u):\wp'(u):1] \to (u, g(u)).$$
Then the extension $\mathsf{G}$ is determined by the symmetric rational system of factor of $\mathsf{A}(\mathbb{C})$ with values in $\mathsf{H}(\mathbb{C})$ given By
$$\varphi([\wp(x):\wp'(x):1],[\wp(y):\wp'(y):1])=f(x+y)\ominus f(x) \ominus f(y).$$
The quasi-periodic relations of Weierstrass zeta
and sigma functions show that the image of $\mathrm{Ext}(\mathsf{A}, \mathsf{H})$ in $\mathbb{R}$ (resp. $\mathbb{R}/\mathbb{Q}$ and $\sqrt{-1}\mathbb{R}/a\sqrt{-1}\mathbb{Q}$) under the canonical map constructed in \cite{Ser} is given by
$$\frac{a\sqrt{-1}\eta_{a\sqrt{-1}}(1)-\eta_{a\sqrt{-1}}(a\sqrt{-1})}{2a\sqrt{-1}}=\frac{\pi}{a} \neq 0~(\text{resp.}~\xi~\text{and}~\xi\sqrt{-1} )$$
in additive case (resp. in multiplicative case and in twisted multiplicative case). In additive case, the element $\frac{\pi}{a}$ is not zero, so the extension $\mathsf{G}$ of $\mathsf{G}_{a}$ by $\mathsf{A}$ is the unique non-split extension. In the multiplicative case (resp. twisted multiplicative case) the equivalence classes of the number $\xi$ (resp. $\xi\sqrt{-1}$) can be chosen as the parameter in our classification described in the section.

\end{document}